\numberwithin{equation}{section}
\newcommand{\C}{\mathbb{C}}
\newcommand{\N}{\mathbb{N}}
\newcommand{\R}{\mathbb{R}}
\newcommand{\T}{\mathbb{T}}
\newcommand{\Z}{\mathbb{Z}}
\newcommand{\ep}{\varepsilon}
\newcommand{\vphi}{\varphi}
\newcommand{\f}{\frac}
\newcommand{\lmd}{\lambda}
\newcommand{\cA}{\mathcal{A}}
\newcommand{\cF}{\mathcal{F}}
\newcommand{\cH}{\mathcal{H}}
\newcommand{\cK}{\mathcal{K}}
\newcommand{\cR}{\mathcal{R}}
\newcommand{\Fav}{F_{\rm{av}}}
\newcommand{\Gav}{G_{\rm{av}}}
\newcommand{\h}{\f{1}{2}}
\newcommand{\al}{\alpha}
\newcommand{\s}{\sigma}
\newcommand{\om}{\omega}
\newcommand{\e}{\eta}
\newcommand{\z}{\zeta}
\newcommand{\tB}{\tilde{B}}
\theoremstyle{theorem}
\newtheorem*{theorem*}{Theorem}
\newtheorem{thm}{Theorem}[section]
\newtheorem*{definition*}{Definition}
\newtheorem{lemma}[thm]{Lemma}
\newtheorem*{lemma*}{Lemma}
\newtheorem{prop}[thm]{Proposition}
\newtheorem{remark}[thm]{Remark}
\newtheorem{cor}[thm]{Corollary}
\title{Scattering and blow up for nonlinear Schr\"{o}dinger equation with the averaged nonlinearity}
\author{\small{JUMPEI KAWAKAMI}\thanks{Department of Mathematical Sciences, Division of Mathematics and Mathematical Sciences in the Graduate School of Science, Kyoto University, Kyoto 606-8502 JAPAN. 
		E-mail: jumpeik@kurims.kyoto-u.ac.jp 
		ORCID:0000-0002-9525-2782.}}{}
\date{}
\begin{document}
\markboth{\centerline{\footnotesize{JUMPEI KAWAKAMI}}}{\centerline{\footnotesize{Scattering and blow up for NLS with the averaged nonlinearity}}}
	
\maketitle
\renewcommand{\thefootnote}{\fnsymbol{footnote}}
\footnote[0]{
	\noindent
	2020 \textit{Mathematics Subject Classification.} 35Q55, 35P25.
	
	\textit{Key words and phrases.} Nonlinear Schr\"{o}dinger equation, strong magnetic confinement, harmonic oscillator, resonant system, high frequency averaging, super-quintic nonlinearity.}
\renewcommand{\thefootnote}{\arabic{footnote}}
\begin{abstract}
	We consider the 3-dimensional nonlinear Schr\"{o}dinger equation (NLS) with average nonlinearity. This is a limiting model of NLS with strong magnetic confinement and a generalized model of the resonant system of NLS with a partial harmonic oscillator in terms of nonlinear power.  We provide a new proof for the conservation law of kinetic energy and remove the restriction on nonlinearity. Moreover, in the case of focusing, super-quintic, and sub-nonic, we construct a new ground-state solution and classify the behavior of the solutions below the ground state. We demonstrate a sharp threshold for scattering and blow-up. 
\end{abstract}

\section{Introduction}
\subsection{The derivation of the model}
We consider the following three-dimensional nonlinear Schrödinger equation(NLS):
\begin{equation}\label{NLS}
	i\partial_t\phi(t)= -\partial_z^2\phi(t) +\lmd \Fav(\phi(t)), \qquad x=(y,z) \in \R^2\times \R.
\end{equation}
$\Fav(\phi)$ is defined as
\begin{equation}\label{Fav}
	\Fav(\phi):=\f{2}{\pi}\int_0^{\f{\pi}{2}} e^{i\theta H}(|e^{-i\theta H}\phi|^{2\sigma}e^{-i\theta H}\phi)d\theta,
\end{equation}
where $\sigma \in \R_+$ and $H$ represents the harmonic oscillator in the $y$-direction:
\begin{equation}
	H:= -\Delta_y + |y|^2.
\end{equation}
Equation \eqref{NLS} was derived in the work of Frank-Méhats-Sparber\cite{Fr}, who studied the following model:
\begin{equation}\label{NLSep}
\begin{split}
i\partial_t \psi^\ep&=\f{\cH}{\ep^2}\psi^\ep -\f{\partial_z^2}{2} \psi^\ep +V(z)\psi^\ep + \lmd|\psi^\ep|^{2\sigma}\psi^\ep, \quad x=(y,z) \in \R^2\times \R,\\
&\cH=-\f{\Delta_y}{2}+\f{|y|^2}{8}-\f{i}{2}(y_1\partial_{y_2}-y_2\partial_{y_1}),\quad 0<\ep\ll 1,\\
 &\f{d^\al}{dz^\al}V(z) \in L^\infty(\R) \qquad \text{for any}\quad  \al\ge2.
\end{split}
\end{equation}
This model was also derived in \cite{Fr} by scaling NLS equation with a strong magnetic field, which describes the behavior of rotating Bose-Einstein condensates. To analyze the strong confinement limit $\ep\to +0$, they considered the profile $\phi^\ep(t):=e^{it\cH/\ep^2}\psi^\ep(t)$. This profile solves
\begin{equation*}
	\begin{split}
		i\partial_t\phi^\ep(t)&= -\f{\partial_z^2}{2}\phi^\ep(t) + V(z)\phi^\ep(t)+\lmd e^{it\f{\cH}{\ep^2}}(|e^{-it\f{\cH}{\ep^2}}\phi^\ep(t)|^{2\sigma} e^{-it\f{\cH}{\ep^2}}\phi^\ep(t)).
	\end{split}
\end{equation*}
Regarding nonlinearity, they introduced the nonlinear function
\begin{equation*}
	F(\theta, u):= e^{i\theta\cH}(|e^{-i\theta\cH}u|^{2\sigma} e^{-i\theta\cH}u).
\end{equation*}
Because $\cH$ is essentially self-adjoint on $C_0^\infty(\R^2)$ with pure point spectrum $\{n+\h:n\in \N_0\}$, $F(\cdot, u)$ is $2\pi$-periodic. Then, to study the behavior of $F(\f{t}{\ep^2}, u)$ as $\ep\to+0$, they defined the average of $F(\cdot, u)$ as
\begin{equation*}
\begin{split}
	\Fav(u):=\lim_{T\to+\infty}\f{1}{T}\int_{0}^{T}F(\theta, u)d\theta=\f{1}{2\pi}\int_{0}^{2\pi} F(\theta, u)d\theta
\end{split}
\end{equation*}
and formally derived the limiting model of $\phi^\ep$:
\begin{equation}\label{NLSav}
	i\partial_t\phi(t)= -\f{\partial_z^2}{2}\phi(t) + V(z)\phi(t)+\lmd \Fav(\phi(t)), \qquad x=(y,z) \in \R^2\times \R.
\end{equation}
In this study, we assume that $V(z)\equiv 0$.
Moreover, for simplicity, by normalizing the coefficients and eliminating the rotation in $y$, we obtain \eqref{NLS}. This type of averaging method has often been used to derive the limiting model of NLS equations with strong confinement, cf.~\cite{N.Ben, N.Ben2, N.Ben3, M, Iwa, ADM}.

On the other hand, when $\s \in \N$, \eqref{NLS} is also derived as the resonant system of NLS with a partial harmonic oscillator:
\begin{equation}\label{NLSH0}
	i\partial_t u=-\Delta_x u +|y|^2 u+ \lmd|u|^{2\sigma}u, \qquad x=(y,z) \in \R^2\times \R.
\end{equation}
This model arises, for example, in Chen \cite{Ord}.
Let us examine the meaning of "resonance''. For simplicity, we assume that $\s=1$. Then the profile $w(t)=e^{itH}u(t)$ solves
\begin{equation}\label{prof}
	\begin{split}
		i\partial_tw(t)&= -\partial_z^2 w(t)+ \lmd e^{itH}(|e^{-itH}w(t)|^2e^{-itH}w(t)) \\
		&=  \lmd\sum_{n_1, n_2, n_3, n \in \N_0} e^{-2it(n_1+n_2-n_3-n)}\Pi_{n} (\Pi_{n_1}w(t) \Pi_{n_2}w(t) \overline{ \Pi_{n_3}w(t)}).
	\end{split}
\end{equation}
In the second line, we use the Hermite expansion, where $\Pi_n: L^2(\R^2) \to L^2(\R^2)$ ($n \in \N_0$) is the projection onto the eigenspace of $H$ corresponding to the eigenvalue $2(n+1)$. The sum of the terms satisfying $n_1+n_2-n_3-n=0$, whose oscillations disappear, is the resonant component. Therefore, by extracting the resonant part in \eqref{prof}, we obtain \eqref{NLS}. Note that it holds the following identity, cf.~Germain-Hani-Thomann\cite{CR}:
\begin{equation*}
	\sum_{n_1+n_2=n_3+n} \Pi_{n} (\Pi_{n_1}\phi \Pi_{n_2}\phi \overline{ \Pi_{n_3}\phi})	= \f{2}{\pi} \int_{-\f{\pi}{4}}^{\f{\pi}{4}} \Big[ e^{i\theta H}\big( (e^{-i\theta H}\phi)(e^{-i\theta H}\phi)\overline{(e^{-i \theta H}\phi)\big)} \Big] d\theta.
\end{equation*}
If $\sigma$ is a positive integer, this calculation can be  replicated. See Fennell \cite{RH}. However, to extend the resonant nonlinearity to the case $\s>0$, we use the representation  \eqref{Fav}.\\

We return to \eqref{NLS}. Equation \eqref{NLS} contains the following conserved quantities.

\begin{itemize}
	\item Mass:
	\begin{equation}\label{mass}
		M[\phi]:=\h\int_{\R^3} |\phi|^2 dx
	\end{equation}
	\item Hamiltonian:
	\begin{equation}\label{hamil}
		E[\phi]:=\h \int_{\R^3} |\partial_z \phi|^2 dx + \f{\lmd}{\pi(\sigma+1)}\int_{0}^{\f{\pi}{2}} \int_{\R^3} |e^{-i\theta H}\phi|^{2\sigma+2} dx d\theta
	\end{equation}
	\item Momentum:
	\begin{equation}\label{momz}
		G[\phi]:=\text{Im}\int_{\R^3}\overline{\phi} \partial_z \phi dx
	\end{equation}
	\item Kinetic energy:
	\begin{equation}\label{kin}
		K[\phi]:=\h \int_{\R^3} \Big( |\nabla_y\phi|^2 + |y\phi|^2 \Big)dx.
	\end{equation}
	
\end{itemize}
Although the conservation laws for $M$, $E$, and $G$ are classical, the conservation law of $K$ has been proven and used only when $\sigma \in \N$, cf.~\cite{RH, CR, MS, K}. However, by employing a new calculation for the representation \eqref{Fav}, we can extend this conservation law to all $\sigma >0$.

\subsection{NLS with a partial harmonic oscillator}

From a mathematical perspective, equation \eqref{NLSH0} can be generalized as 
\begin{equation}\label{NLSH}
	i\partial_t u=-\Delta_x u +|y|^2 u+ \lmd|u|^{2\sigma}u, \qquad x=(y,z) \in \R^n\times \R^{d-n}, \quad 1\le d, \quad 0\le n\le d.
\end{equation}
If $n=0$, it is well-known that the linear term induces a large time dispersive effect, which leads to asymptotically linear behavior, that is, scattering.
On the other hand, when $n=d$, the linear term corresponds to the isotropic harmonic oscillator and \eqref{NLSH} no longer has a large time dispersive effect. This case was studied in \cite{RNLSHP, ECQHO, ECNLSQP}. However, because of the absence of the large time dispersion, the global behaviors are much less known than in the case $n<d$. \\

Our study is relevant to the case $1\le n<d$. In this case, a partial harmonic oscillator with anisotropic potential appears in the linear term. Because the $d-n$-dimensional dispersive effect arises in $z$, we expect the large solution to \eqref{NLSH} to scatter as long as the nonlinearity is at least mass-critical for the spatial dimension $d-n$ and at most energy-critical for the spatial dimension $d$:
\begin{equation}\label{as00}
	\f{2}{d-n}\le \sigma \le \f{2}{d-2}.
\end{equation}

When $n=1$, the condition \eqref{as00} is not empty, even if we exclude both endpoints.
In the defocusing case, Antonelli-Carles-Silva \cite{Scpht} proved the large data scattering under the condition \eqref{as00} except for both endpoints in 
\[	\Sigma^1:=\{ \vphi \in H^1(\R^d): |x|\vphi \in L^2(\R^d)\}.\]
In the focusing case, Ardila-Carles \cite{Gdb} studied \eqref{NLSH} under the conditions $\s\ge \h $ and \eqref{as00} except for both endpoints in the energy space 
\[	B^1:=\{ \vphi \in H^1(\R^d): |y|\vphi \in L^2(\R^d)\}. \]
They classified the behavior (scattering or blow-up) of the solution below the ground state.

Cheng-Guo-Guo-Liao-Shen \cite{Sc3DNLSphp} proved the large data scattering of \eqref{NLSH} in $B^1$
under the condition $(d,n,\s)=(3,1,1)$, which is one of the lower endpoints, $\s=2/(d-n)$. They approximated \eqref{NLSH} using the corresponding resonant system, which has a structure similar to \eqref{NLS}, and proved the global well-posedness and scattering of the resonant system in the space
\[ L_z^2\Sigma_y^1:=\{ \vphi \in L^2(\R^d): \nabla_y\vphi, |y|\vphi \in L^2(\R^d)\}.\] 
The analysis of the resonant system was inspired by Dodson \cite{mc3DNLS, mc1DNLS, mc2DNLS}. In \cite{Gdb,Sc3DNLSphp}, the concentration compactness and rigidity argument developed by Kenig-Merle \cite{KM} was used.\\

When $n\ge 2$, the global dispersive effect becomes weaker than in the case $n=1$ and the left side of \eqref{as00} is equal to or greater than the right side. Therefore, to clarify the global dynamics, such as scattering, under the energy-subcritical condition $\s<2/(d-2)$, we have to consider in $\Sigma^1$ or narrower function spaces. Antonelli-Carles-Silva \cite{Scpht} proved the small data scattering and the existence of wave operators, under the condition $n\ge1$ and additional conditions for $\s$. Hani-Thomann \cite{MS} studied \eqref{NLSH} in the case $\sigma=1$ and $d-n=1$. They proved modified scattering and constructed modified wave operators for small initial and final data, respectively. In \cite{MS} they used the resonant system to analyze \eqref{NLSH}.\\

 The same type of model as \eqref{NLS} is also used as the limiting model of NLS with strong magnetic confinement, cf.~\cite{N.Ben, N.Ben2, N.Ben3, Fr, K2, K}.  The dynamics of \eqref{NLS} is closely related to many open problems concerning the dynamics of \eqref{NLSH}. \\ 

Equation \eqref{NLS} has not only been used to analyze \eqref{NLSH}, but has also been studied. The author \cite{K} showed that when $1\le\s\le 4$, \eqref{NLS} is well-posed in $\Sigma^1$ and the case $\s=4$ is energy-critical (see also Proposition \ref{wp} in this paper).
Because \eqref{NLSH0} is energy-critical when $\s=2$, we expect \eqref{NLS} will be useful in clarifying the dynamics of \eqref{NLSH0} and \eqref{NLSep} with energy-supercritical nonlinearity ($2<\sigma \le4$). \\

When $(d,n,\s)=(3,2,2)$, equation \eqref{NLSH} is both mass and energy critical (see \eqref{as00}). Moreover, the spatial dimension $3$ is of physical significance. 
Therefore, this case is difficult yet interesting. To the best of our knowledge, the global dynamics of \eqref{NLSH0} with $(n,\s)=(2,2)$ is an open problem, and our scattering result for \eqref{NLS} does not cover the case $\s=2$. This will be the subject of our future studies.\\

\subsection{NLS on the wave guide manifolds}\label{NLSonWGM}
For topics related to \eqref{NLS}, we should refer to NLS on the waveguide manifolds $\T^n\times \R^{d-n}$ for $1\le n <d$:
\begin{equation}\label{NLSM}
	i\partial_t u=-\Delta_{x} u +\lmd |u|^{2\s}u, \qquad x=(y,z)\in \T^{n}\times \R^{d-n}.
\end{equation}
The dynamics of \eqref{NLSM} have been extensively studied, cf. \cite{ScNLSR2T, wpecNLSM, ScNLSRT2, MST, StppNLS4d, GwpNLSRT3, LTDfNLSRdT, NLSRT, WpscNLSMRdT, GwpcNLSM, ScNLSR2T2, ScNLSRmT, LTDNLSM}. 
Hani-Pausader \cite{ScNLSRT2} studied the defocusing and quinic NLS on $\R\times \T^2$, that is, $(d,n,\s)=(3,2,2)$. They introduced a large-scale resonant system on $\R\times \Z^2$.
\begin{equation}\label{resM}
	\begin{split}
		&i\partial_t u_j+\partial_z^2 u_j =  \sum_{(j_1, j_2, j_3, j_4, j_5, j)\in \mathscr{R}(j)}u_{j_1}\overline{u_{j_2}} u_{j_3} \overline{u_{j_4}} u_{j_5}, \qquad \vec{u}=\{u_j\}_{j\in \Z^2},\\
		&\mathscr{R}(j)=\{(j_1, j_2, j_3, j_4, j_5)\in (\Z^2)^5: j_1-j_2+j_3-j_4+j_5=j \quad\text{and} \\
		& \qquad \qquad \qquad \qquad \qquad \qquad \qquad  |j_1|^2-|j_2|^2+|j_3|^2-|j_4|^2+|j_5|^2=|j|^2\}.
	\end{split}
\end{equation}
This equation has a structure similar to \eqref{NLS} with $\s=2$. This type of model is often referred to as a  ``vector-valued resonant Schr\"{o}dinger system." \cite{ScNLSRT2} asserted the large data scattering for \eqref{NLSM} based on a conjecture concerning the large data scattering result of \eqref{resM}. This conjecture was later confirmed by Cheng-Guo-Zhao \cite{ScNLSRT}. Similarly, Yang-Zhao \cite{Scmres} demonstrated the global well-posedness and scattering of the defocusing, cubic resonant Schr\"{o}dinger system on $\R^2\times \Z$, and using this result, Chen-Guo-Yang-Zhao \cite{ScNLSR2T} established the global well-posedness and scattering of the defocusing cubic NLS \eqref{NLSM} on  $\R^2\times \T$. In addition, the corresponding results for the cubic NLS on $\R^2\times \T^2$ were obtained by Yang-Zhao \cite{Sc2Dcres} and Zhao \cite{ScNLSR2T2}. In the case $(d,n,\s)=(3,1,2), (4,1,1)$ with $\lmd=+1$, Zhao \cite{ScNLSRmT} obtained a scattering result.

In the focusing case, Chen-Guo-Hwang-Yoon \cite{Sc2Dfres} proved the global well-posedness and scattering of a large-scale solution to \eqref{NLSM} on $\R^2\times\T$ by establishing the global well-posedness and scattering of the resonant Schr\"{o}dinger system below the ground state.\\

In \eqref{NLS}, with $\s=2$, the global existence in the defocusing case was proven in \cite{K}. In the focusing case, we prove the global existence below the ground state. However, as mentioned previously, our scattering result does not include the case $\s=2$.

\subsection{Dispersion managed NLS}\label{DMNLS}

Our study is also related to NLS with a periodically varying dispersion coefficient:
\begin{equation}\label{DM}
	i\partial_t u+d(t)\partial_x^2u+|u|^{2\s} u=0, \quad x\in \R.
\end{equation}
Where $d(t)$ is a periodic function. This model arises in the context of nonlinear fiber optics. The averaging technique has been employed in the strong dispersion management regime:
\begin{equation}
	d(t)=d_\text{av}+\f{1}{\ep}d_0\big(\f{t}{\ep}\big).
\end{equation}
$d_0$ is a periodic function of mean zero and $d_{\text{av}}\in \R$ is the average of $d(t)$ over one period. The small parameter $0<\epsilon\ll1$ affects the period and the amplitude of the dispersion $d(t)$. To analyze \eqref{DM}, Gabitov-Turitsyn  \cite{GabTur1, GabTur2} introduced its averaged model. In particular, when $d(t)$ satisfies
\begin{equation*}
	d(t)=\left\{
	\begin{array}{ll}
		1 & t\in [0,1) \\
		-1 & t\in [1,2)
	\end{array}
	\right.
\end{equation*}
over one period, the averaged model is given by
\begin{equation}\label{DMav}
	i\partial_t v+d_\text{av}\partial_x^2v+\int_{0}^{1} e^{-i\tau\partial_x^2}(|e^{i\tau\partial_x^2}v|^{2\s}e^{i\tau\partial_x^2} v)d\tau=0.
\end{equation}
Note that the case $d_{\text{av}}>0$ corresponds to focusing, while the case $d_{\text{av}}<0$ is defocusing. The case $d_{\text{av}}=0$ is a singular limit. The Cauchy problem for equation (1.5) is locally well-posed in $H^1(\R)$ when  $d_{\text{av}} \neq 0$ and $0< \s$, or $d_{\text{av}} = 0$ and $0<\s\le2$. Moreover, the $H^1$-solution exists globally when $0<\s$ for $d_{\text{av}}< 0$; $0<\s<4$ for $d_{\text{av}}>0$; $0<\s\le 2$ for $d_{\text{av}} = 0$. See\cite{WPsnlNLS, WPADM}. The case $\s=4$ is mass-critical. Following these results, Choi-Hong-Lee \cite{GbDMNLS} identified the global versus blow-up criteria for the $H^1$-solution to \eqref{DMav} in the case $d_{\text{av}}>0$ and $\s>4$. In this result, the asymptotic behavior of the global solution is unknown. We hope that our main results will provide some insight into this open problem. On the other hand, the averaging process (the strong limit $\ep\to+0$) was verified in \cite{DMNLDla, SeDM, ADM}.

\subsection{Our aim}
This study aims to clarify the dynamics of \eqref{NLS}. From  \cite[Theorem 1.2]{K}, we obtain the well-posedness in $\Sigma^1$ and can easily replace $\Sigma^1$ with $B^1$ when $V(z)\equiv 0$ in \cite{K}. Moreover, by combining the energy estimates in \cite[Section 4]{K} and  Lemmas \ref{conke} and \ref{bdnl} in this paper, we obtain the following proposition.

\begin{prop}\label{wp}
	Let $\h\le\sigma\le4$. Then, \eqref{NLS} is locally well-posed in $B^1$.
	\begin{enumerate}
		\item When $\sigma<2$, all solutions exist globally.
		\item When $2\le\sigma<4$ and $\lmd=+1$, all solutions exist globally.
	\end{enumerate}
 Moreover, when $\sigma=4$, \eqref{NLS} is energy-critical and left invariant by the scaling
\begin{equation}\label{si}
	\phi(t,y,z) \longmapsto \phi_\mu(t,y,z):=\mu^{\f{1}{4}}\phi(\mu^2t,y,\mu z), \qquad \forall \mu>0. 
\end{equation}
This scaling leaves neither $ K[\phi_\mu(t)]$ nor $E[\phi_\mu(t)]$ invariant, but conserves
\[ K[\phi_\mu(t)]^3E[\phi_\mu(t)],\]
where $E$ and $K$ are defined by \eqref{hamil} and \eqref{kin}, respectively. 
\end{prop}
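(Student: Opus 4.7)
The plan is to combine three ingredients: the contraction-mapping setup of \cite{K}, the newly established conservation of kinetic energy (Lemma~\ref{conke}), and the improved nonlinear estimate for $\Fav$ (Lemma~\ref{bdnl}). For local well-posedness in $B^1$, I would replay the fixed-point argument of \cite[Theorem~1.2]{K}: that argument contracts on a Strichartz norm built from the one-dimensional dispersion $e^{it\partial_z^2}$ together with $B^1$-type control on $\nabla_y$, $y$, and $\partial_z$. Because $V\equiv 0$, the weight $|z|$ never enters the estimates of \cite[Section~4]{K}, so $\Sigma^1$ can be replaced by $B^1$ with no further change; the only ingredient that must be upgraded is the $\Fav$-bound, and Lemma~\ref{bdnl} supplies this with an enlarged admissible range that yields the lower exponent $\s\ge\h$.

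For global existence I would combine the blow-up alternative in $B^1$ with conservation of $M$, $E$, and $K$. Case~(ii) is immediate: when $\lmd=+1$ every term in \eqref{hamil} is nonnegative, so $\|\partial_z\phi(t)\|_{L^2}^2\le 2E[\phi(0)]$, and this together with $M$ and $K$ conservation controls the full $B^1$-norm. Case~(i), which must also cover focusing $\lmd=-1$, reduces to bounding the potential term
\[
\int_0^{\f{\pi}{2}} \|e^{-i\theta H}\phi(t)\|_{L^{2\s+2}(\R^3)}^{2\s+2}\,d\theta
\]
by a subcritical power of $\|\partial_z\phi(t)\|_{L^2}$. Since $e^{-i\theta H}$ is $L^2$-unitary, commutes with $\partial_z$, and satisfies $\langle e^{-i\theta H}\phi,\,H e^{-i\theta H}\phi\rangle = 2K[\phi]$, I would apply a one-dimensional Gagliardo--Nirenberg inequality in $z$ uniformly in $\theta$, followed by Hölder in $y$ and the two-dimensional Sobolev embedding $H^1(\R^2_y)\hookrightarrow L^r_y$ with $r=2(\s+2)/(2-\s)<\infty$ (finite precisely when $\s<2$), yielding a bound of the form $C(M[\phi(0)],K[\phi(0)])\|\partial_z\phi(t)\|_{L^2}^{\s}$. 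A Young-type absorption into the $\h\|\partial_z\phi\|_{L^2}^2$ piece of $E$ then closes the a priori estimate.

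For the scaling statement at $\s=4$, the key observation is that $e^{-i\theta H}$ acts only on the $y$-variable and therefore commutes with the $z$-dilation; consequently $e^{-i\theta H}\phi_\mu(t,y,z)=\mu^{1/4}(e^{-i\theta H}\phi)(\mu^2 t,y,\mu z)$, whence $\Fav(\phi_\mu)(t,y,z)=\mu^{(2\s+1)/4}\Fav(\phi)(\mu^2 t,y,\mu z)$. Matching this with the common $\mu^{9/4}$-weight of $i\partial_t\phi_\mu$ and $-\partial_z^2\phi_\mu$ uniquely selects $\s=4$. The change of variables $z'=\mu z$ in \eqref{hamil} and \eqref{kin} then gives $E[\phi_\mu(t)]=\mu^{3/2}E[\phi(\mu^2 t)]$ and $K[\phi_\mu(t)]=\mu^{-1/2}K[\phi(\mu^2 t)]$, so $K[\phi_\mu(t)]^3E[\phi_\mu(t)]=K[\phi(\mu^2 t)]^3 E[\phi(\mu^2 t)]$, as claimed.

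The hardest step will be case~(i) of global existence: for the anisotropic Gagliardo--Nirenberg argument to close, every norm on the right-hand side must be controlled uniformly in $\theta$ by the conserved quantities $M$ and $K$. This is precisely where Lemma~\ref{conke} is indispensable; without the conservation of $K$ the $\theta$-independent identity $\langle e^{-i\theta H}\phi,\,H e^{-i\theta H}\phi\rangle = 2K[\phi]$ could not be converted into a time-uniform a priori bound on the $L^r_y L^2_z$-norm of $e^{-i\theta H}\phi$, and the focusing subcritical case would be out of reach.
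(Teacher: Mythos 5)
Your proposal is correct, and it supplies details that the paper itself only cites: Proposition \ref{wp} is obtained there by combining the local theory of \cite{K} (with $\Sigma^1$ replaced by $B^1$, possible since $V\equiv0$) with Lemmas \ref{conke} and \ref{bdnl}, and no further argument is written out. Your route agrees with this for the local theory, for the defocusing case (ii), and for the scaling computation at $\s=4$ (your identities $E[\phi_\mu(t)]=\mu^{3/2}E[\phi(\mu^2t)]$ and $K[\phi_\mu(t)]=\mu^{-1/2}K[\phi(\mu^2t)]$, and the exponent matching $\frac{2\s+1}{4}=\frac94$ forcing $\s=4$, are exactly right). Where you genuinely diverge is case (i): instead of invoking the paper's stated potential-energy bound, Lemma \ref{bdnl}, you prove the anisotropic estimate
\[
\int_0^{\frac{\pi}{2}}\!\!\int_{\R^3}|e^{-i\theta H}\phi|^{2\s+2}\,dx\,d\theta \;\lesssim\; \bigl(M[\phi]+K[\phi]\bigr)^{\frac{\s+2}{2}}\,\|\partial_z\phi\|_{L^2}^{\s},
\]
via one-dimensional Gagliardo--Nirenberg in $z$, H\"older in $y$ with $r=\frac{2(\s+2)}{2-\s}$, Minkowski, the embedding $H^1(\R^2_y)\hookrightarrow L^r_y$, and the facts that $e^{-i\theta H}$ is unitary and commutes with $\partial_z$ and $H$. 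This is not merely stylistic: a direct application of Lemma \ref{bdnl} gives potential energy $\lesssim M^{\frac{4-\s}{2}}\|\phi\|_{B^1}^{3\s-2}$, whose $\partial_z$-exponent $3\s-2$ is subcritical only for $\s<\frac43$, so your sharper bound---which puts all the growth on $\|\partial_z\phi\|_{L^2}^{\s}$ with $\s<2$ and controls everything else by the conserved $M$ and $K$ of Lemma \ref{conke}---is what actually closes the focusing a priori estimate on the whole range $\h\le\s<2$; it presumably reproduces the energy estimates of \cite[Section 4]{K} to which the paper appeals. The one small inaccuracy is your attribution of the lower endpoint $\s\ge\h$ to Lemma \ref{bdnl}: that restriction comes from the $C^1$-regularity of $w\mapsto|w|^{2\s}w$ needed in the contraction and stability estimates of the local theory, not from the energy bound.
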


When $2<\s$, equations \eqref{NLSep} and \eqref{NLSH0} become energy-supercritical and the global existence and dynamics for large data are unknown. We expect \eqref{NLS} will be useful in clarifying the dynamics of  \eqref{NLSep}  and \eqref{NLSH0} with energy-supercritical nonlinearity ($2<\sigma \le4$). However, the global behavior of \eqref{NLS} for large data is unknown in Proposition \ref{wp}. Therefore, in the case $2\le \s<4$ we consider this problem. 

\subsection{Strategy}
In the focusing case, we consider the existence of a ground-state solution and the dynamics of the solution to \eqref{NLS} below the ground state. We introduce a standing wave in \eqref{NLS}. First, we perform the following transformation
\begin{equation*}
	\psi(t):=e^{-itH}\phi(t),
\end{equation*}
where $\phi(t)$ represents the solution to \eqref{NLS}. Then, $\psi(t)$ solves the following equation (see Remark \ref{equivNLS2}):
\begin{equation}\label{NLS2}
	i\partial_t\psi(t)= H\psi(t)-\partial_z^2\psi(t) +\lmd \Fav(\psi(t)).
\end{equation}
 In the focusing case, setting $\psi(t,x)=e^{it}\vphi(x)$ (which is equivalent to $\phi(t,x)= e^{it}e^{itH}\vphi(x)$), an elliptic problem 
\begin{equation}\label{ep2}
	H\vphi-\partial_z^2\varphi + \varphi-\Fav(\varphi) =0
\end{equation}
arises. Then we introduce the action
\begin{equation*}
\begin{split}
		S[\vphi]:=& K[\vphi]+ M[\vphi] + E[\vphi]\\
	=&\h \int_{\R^3}\big (|\nabla_x\vphi|^2 + |y\vphi|^2+ |\vphi|^2\big) dx - \f{1}{\pi(\s+1)}\int_{0}^{\f{\pi}{2}} \int_{\R^3} |e^{-i\theta H}\vphi|^{2\sigma+2} dxd\theta, 
\end{split}
\end{equation*}
and refer to $Q$ as the ground-state solution if it satisfies
\begin{equation*}
	S[Q]=\inf\{S[\vphi]: \vphi\neq0 \text{ and } \vphi \text{ solves } \eqref{ep2} \}.
\end{equation*}
To construct the ground-state solution, we consider the variational problem
\begin{equation}\label{v2}
	\inf\{S[\vphi]: \vphi \in B^1 \backslash \{0\}\text{ and } I[\vphi]=0\}, 
\end{equation}
where
\begin{align*}
	I[\vphi]&:=  \int_{\R^3}\big (|\nabla_x\vphi|^2 + |y\vphi|^2+ |\vphi|^2\big) dx - \f{2}{\pi}\int_{0}^{\f{\pi}{2}} \int_{\R^3} |e^{-i\theta H}\vphi|^{2\sigma+2} dxd\theta.
\end{align*}
Then, we can show that \eqref{v2} is finite and identify a minimizer. Thus, the ground-state solution to \eqref{NLS2} exists. In fact, the minimizer $Q$ is an extremizer of the Strichartz estimate
\begin{equation}\label{SSQ}
 \Big(\int_{0}^{\f{\pi}{2}} \int_{\R^3} |e^{-i\theta H}\vphi|^{2\sigma+2} dxd\theta\Big)^{\f{1}{2\s+2}}\lesssim\Big( \int_{\R^3}\big (|\nabla_x\vphi|^2 + |y\vphi|^2+ |\vphi|^2\big) dx \Big)^{\f{1}{2}}.
\end{equation}

\begin{remark}
If we set $\phi(t)=e^{i t}\vphi(x)$, $\vphi$ solves 
\begin{equation}
	-\partial_z^2\varphi + \varphi-\Fav(\varphi) =0.
\end{equation}
To prove the existence of the ground-state solution, we need to consider the following variational problem: 
\begin{equation}\label{v}
	\inf\{E[\vphi]+ M[\vphi]: \vphi \in B^1 \backslash \{0\}\text{ and } I_z[\vphi]=0\}, 
\end{equation}
where
\begin{align*}
	I_z[\vphi]&:= \int_{\R^3} \big( |\partial_z\vphi|^2+ |\vphi|^2 \big)dx - \f{2}{\pi}\int_{0}^{\f{\pi}{2}} \int_{\R^3} |e^{-i\theta H}\vphi|^{2\sigma+2} dxd\theta.
\end{align*}
However, when $\s>1$, we can prove that the minimizer of \eqref{v} does not exist (see Remark \ref{Fvp}). Roughly speaking, this is because the following Strichartz estimate does not hold for $\s>1$:
\begin{equation*}
\Big(\int_{0}^{\f{\pi}{2}}\int_{\R^2} |e^{-i\theta H}\vphi|^{2\sigma+2} dyd\theta\Big)^{\f{1}{2\s+2}}\lesssim \Big(\int_{\R^2} |\vphi|^2dy\Big)^{\h}.
\end{equation*}
To avoid this difficulty, we set $\phi(t,x)=e^{it}e^{itH}\vphi(x)$.\\

On the other hand, when $\s\le1$, it holds from Lemma \ref{bdnl} that 
\begin{equation}\label{SS}
	\Big(\int_{0}^{\f{\pi}{2}}\int_{\R^3} |e^{-i\theta H}\vphi|^{2\sigma+2} dxd\theta\Big)^{\f{1}{2\s+2}}\lesssim \Big(\int_{\R^2} |\partial_z\vphi|^2 +|\vphi|^2dx\Big)^{\h}.
\end{equation}
Hence, we expect that there exists a minimizer of \eqref{v}, which is an extremizer for \eqref{SS}. By the lens transform (cf.~\cite{pseudo}), the Strichartz estimate for $\{e^{-i\theta H}\}_{\theta\in[0,\pi/2]}$ is associated with those for the free Schr\"{o}dinger propagator $\{e^{i\theta \Delta}\}_{\theta\in \R}$. The problems concerning the best constants and extremizers for the Strichartz estimates for $\{e^{i\theta \Delta}\}_{\theta\in \R}$ have been extensively studied, cf.~\cite{MSSt, EMSt, MSti, SSld, SiS, RSt1D, RSC}. See also \cite{CR, CRH}. Because the main subject of this study is the case $2\le\s$, we will postpone this problem to future work. 
\end{remark}

We next study the global dynamics of the solutions to \eqref{NLS} below the ground state for \eqref{v2}. 
Our strategy is based on the work of Ardila-Carles \cite{Gdb}. We introduce the functional
\begin{equation}
	P[\vphi]:= 2\int_{\R^3} |\partial_z\vphi|^2 dx - \f{2\s}{\pi(\s+1)}\int_{0}^{\f{\pi}{2}} \int_{\R^3} |e^{-i\theta H}\vphi|^{2\sigma+2} dxd\theta.
\end{equation}
We also define the following subsets of $B^1$,
\begin{equation}\label{K+0}
	\cK^+:=\{ \vphi\in B^1 : S[\vphi]<S[Q], \quad P[\vphi]\ge0\},
\end{equation}
\begin{equation}\label{K-0}
	\cK^-:=\{ \vphi\in B^1 : S[\vphi]<S[Q], \quad P[\vphi]<0\},
\end{equation}
where $Q$ is the minimizer of \eqref{v2}. Note that $S[\cdot]$ is the conserved quantity of the solutions to \eqref{NLS} and \eqref{NLS2}. We prove that all solutions whose initial data belong to $\cK^+$ exist globally and scatter, and those whose initial data belong to $\cK^-$ blow up at finite or infinite time. The scattering result is based on Kenig–Merle's concentration compactness and rigidity arguments \cite{KM}. In \cite{Gdb}, because the model contains partial harmonic oscillator, the method developed in \cite{DHS, HR} could not be applied. They employed a variational approach based on the work of Ibrahim-Masmoudi-Nakanishi \cite{IMN}. We follow this approach.

We replace the condition \eqref{as00} in our case. In equation \eqref{NLS},  the global dispersive effect arises in one dimension, and as mentioned in Proposition \ref{wp}, equation \eqref{NLS} becomes energy-critical when $\s=4$. Thus, to obtain the corresponding results, we assume that 
\[2<\s< 4.\]
However, we include the case $\s=2$ except for the scattering result.\\

In proving scattering, the condition $\s>2$ presents some technical difficulties. To obtain the scattering result, a global-in-time Srtichartz's estimate is required. Because NLS equations with a standard power-type nonlinearity, such as \eqref{NLSep} and \eqref{NLSH0}, are energy-supercritical when $\s>2$, we need to use Strichartz's estimate for $e^{-i\theta H}$ (in $(\theta, y)$) in the analysis of the averaged nonlinearity. This implies that at least one anisotropic spatial norm is necessary, and we have to consider the balance of four integrabilities in the $t$, $\theta$, $y$, and $z$ directions. 
On the other hand, for technical reasons, we would prefer to minimize the use of anisotropic norms in constructing a critical element (in Sections \ref{keyprop} and \ref{proofsc}) that is essential for the concentration of compactness and rigidity arguments.

Then, we introduce new space-time norms and derive new global-in-time Strichartz's estimates (see Section \ref{prel}). By using these tools, we can effectively employ the propagator $e^{-i\theta H}$ included in the averaged nonlinearity and overcome the difficulties.

\subsection{Main result}
First, we present the result for the variational problem \eqref{v2}.
\begin{prop}\label{main0}
	Let $\lmd=-1$ and $\sigma<4$. Then, the minimizer $Q$ of \eqref{v2} exists. Moreover, this is an extremizer of the Strichartz estimate \eqref{SSQ}.
\end{prop}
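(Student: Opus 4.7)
On the Pohozaev-type constraint $I[\vphi]=0$, the two parts of $S[\vphi]$ are proportional and a direct computation gives $S[\vphi] = \f{\s}{2(\s+1)}\|\vphi\|_{B^1}^2$, where $\|\vphi\|_{B^1}^2 := \int_{\R^3}(|\nabla_x\vphi|^2+|y\vphi|^2+|\vphi|^2)dx$. Hence \eqref{v2} reduces to minimizing $\|\vphi\|_{B^1}^2$ on $\{I=0\}$. For any $\vphi\neq 0$ the equation $I[\mu\vphi]=0$ has a unique solution $\mu>0$, and one computes
\[
\|\mu\vphi\|_{B^1}^2 = \Big(\f{\pi}{2}\Big)^{\!1/\s}\bigg(\f{\|\vphi\|_{B^1}}{(\int_0^{\pi/2}\int_{\R^3}|e^{-i\theta H}\vphi|^{2\s+2}dxd\theta)^{1/(2\s+2)}}\bigg)^{\!\!(2\s+2)/\s}.
\]
Thus minimizing $\|\cdot\|_{B^1}^2$ on $\{I=0\}$ is the same as determining the best constant in \eqref{SSQ}, and any minimizer of \eqref{v2} is, after rescaling, an extremizer of \eqref{SSQ} (and conversely). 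The plan is to produce such an extremizer by a concentration-compactness argument whose only non-compact direction is translation in $z$.

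I would fix a minimizing sequence $\{\vphi_n\}\subset B^1$ of \eqref{v2}, which is bounded in $B^1$ and satisfies $\int_0^{\pi/2}\|e^{-i\theta H}\vphi_n\|_{L^{2\s+2}}^{2\s+2}d\theta\gtrsim 1$ by the constraint. The only obstruction to compactness is translation in the $z$-direction: the potential $|y|^2$ confines the $y$-variable (the resolvent of $H$ is compact on $L^2(\R^2_y)$), so the map $\vphi\mapsto e^{-i\theta H}\vphi$ is compact from $B^1$ into $L^{2\s+2}_{\theta,x}$ when restricted to functions $z$-localised in a bounded interval. The hypothesis $\s<4$, strictly below the energy-critical exponent identified in Proposition \ref{wp}, is essential here to rule out a concentration scaling. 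After translating each $\vphi_n$ by a suitable $z_n\in\R$ and passing to a subsequence, I would show that the translated sequence converges weakly in $B^1$ to some $Q\neq 0$, nonvanishing being forced by the lower bound on the averaged Strichartz norm.

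A Brezis-Lieb decomposition then applies to both $\|\cdot\|_{B^1}^2$ (automatic from weak convergence) and the averaged nonlinear functional, splitting $S[\vphi_n]$ and $I[\vphi_n]$ additively between $Q$ and the remainder $r_n$. Strict subadditivity of the infimum, together with the fact that $Q$ may always be rescaled onto $\{I=0\}$ without increasing its $B^1$-norm above the infimum, forces $r_n\to 0$ strongly, so $Q$ realises the infimum and is the desired minimizer.

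The main obstacle is the Brezis-Lieb step for the averaged nonlinearity: since $e^{-i\theta H}$ is non-local, the usual almost-everywhere convergence argument must be combined with the $\theta$-integral. I expect this to be handled by using the compactness of $e^{-i\theta H}$ in $y$ to upgrade weak convergence of the translated sequence to $L^{2\s+2}_{\mathrm{loc}}$-convergence of $e^{-i\theta H}\vphi_n$ for each fixed $\theta$, then applying Brezis-Lieb in $x$ pointwise in $\theta$, and finally integrating in $\theta$ by dominated convergence with the Strichartz bound \eqref{SSQ} as majorant.
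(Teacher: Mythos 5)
Your overall route is essentially the paper's: restrict to the Nehari set $\{I=0\}$, note that there $S[\vphi]=\frac{\s}{2(\s+1)}\|\vphi\|_{B^1}^2$ so that minimizing $S$ is equivalent to maximizing the Strichartz quotient (your scaling computation with the unique $\mu$ is exactly the paper's Remark \ref{Extr}); run a minimizing sequence, translate only in $z$ (confinement in $y$ kills the other translations), extract a nonzero weak limit using the lower bound coming from the constraint together with Lemma \ref{bdnl} (this is where $\s<4$ enters, since the exponent $\frac{4-\s}{2(\s+1)}$ on the $L^2$ norm must be positive to prevent vanishing), and finish with a Brezis--Lieb splitting of $S$ and $I$. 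The paper's bookkeeping at the end is slightly different — it does not prove strong convergence of the remainder, but shows $I[\psi^*]=0$ by contradiction and then uses weak lower semicontinuity of the quadratic form — but that difference is harmless.

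The genuine gap is in your execution of the Brezis--Lieb step, precisely the point you flag as the main obstacle. You propose to apply Brezis--Lieb in $x$ \emph{pointwise in $\theta$} and then integrate in $\theta$ by dominated convergence "with the Strichartz bound \eqref{SSQ} as majorant". For $\s>2$ (which is the regime of main interest and is included in the proposition) this fails twice: first, $2\s+2>6$, so $B^1\not\hookrightarrow L^{2\s+2}(\R^3)$ and there is no uniform-in-$n$ bound on $\|e^{-i\theta H}\vphi_n\|_{L^{2\s+2}_x}$ at a \emph{fixed} $\theta$ — the fixed-$\theta$ Brezis--Lieb lemma therefore cannot be applied, and for the same reason the claimed compactness of $\vphi\mapsto e^{-i\theta H}\vphi$ into $L^{2\s+2}$ for $z$-localized functions is not available slice-by-slice in $\theta$; second, \eqref{SSQ} (or Lemma \ref{bdnl}) is an \emph{integrated-in-$\theta$} bound, not a pointwise-in-$\theta$, $n$-independent majorant, so the dominated convergence step as stated has no dominating function. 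The correct order of operations is the reverse: establish almost-everywhere convergence of $V(\theta)\tilde\psi_n$ \emph{jointly} in $(\theta,x)$ and then apply Brezis--Lieb once on the product space $[0,\frac{\pi}{2}]\times\R^3$, where the only uniform bound needed is exactly the averaged bound of Lemma \ref{bdnl}. The paper obtains the joint a.e.\ convergence by proving a uniform $\||\partial_\theta|^{1/2}\vphi(\theta)V(\theta)\tilde\psi_n\|_{L^2_{\theta,x}}\lesssim\|\tilde\psi_n\|_{B^1}$ bound and invoking Rellich--Kondrachov in $(\theta,x)$; alternatively, within your framework one can note that for each fixed $\theta$ the full sequence converges strongly in $L^2(\R^2_y\times[-R,R]_z)$ (compactness of $B^1\hookrightarrow L^2$ there, with $V(\theta)$ unitary on $L^2$ and bounded on $B^1$), integrate these $L^2$-local quantities in $\theta$ using the conserved $L^2$ norm as the trivial majorant, and extract a subsequence converging a.e.\ in $(\theta,x)$. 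Without some such joint argument, the decomposition \eqref{LPDo3}-type identity for the averaged potential energy — and hence the conclusion that the weak limit lies on the constraint set — is not justified for $2<\s<4$.
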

Next, we present the main result of this study. $\cK^{\pm}$ are defined by \eqref{K+0} and \eqref{K-0}, respectively.

\begin{thm}\label{main}
	Let $\lmd=-1$ and $2\le\sigma<4$. Let $\phi\in C((T_- , T_+), B^1)$ be the solution to \eqref{NLS} with $\phi(0)=\phi_0\in B^1$.
	\begin{enumerate}
		\item 
		If $\phi_0\in \cK^+$, then the corresponding solution $\phi(t)$ exists globally. Moreover, if $2<\sigma<4$, $\phi(t)$ scatters in $B^1$; there exist $\phi^\pm \in B^1$ such that
		\begin{equation}
			\lim_{t\to \pm\infty}\|\phi(t)-e^{it\partial_z^2}\phi^\pm\|_{B^1}= 0. 
		\end{equation}
		\item
		If $\phi_0\in \cK^-$, one of the following two cases occurs.
		\begin{itemize}	
			\item  The corresponding solution $\phi(t)$ blows up in positive time. That is, $T_+<\infty$ and 
			\begin{equation}
				\lim_{t\to T_+}\|\partial_z \phi(t)\|_{L^2} =\infty.
			\end{equation}
			\item 
			The corresponding solution $\phi(t)$ blows up at infinite positive time. That is, $T_+=\infty$ and there exists a sequence $\{t_k\}_{k=1}^\infty$ such that $t_k\to \infty$  and $\|\partial_z \phi(t)\|_{L^2} \to \infty$ as $k\to +\infty$.
		\end{itemize}
		The same statement holds for negative time. 
		Moreover, if $\phi_0$ satisfies $z\phi_0\in L^2(\R^3)$, the corresponding solution blows up in finite time.
	\end{enumerate}
\end{thm}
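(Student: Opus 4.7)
The plan is to follow the concentration-compactness and rigidity framework of Kenig--Merle \cite{KM}, adapted to the anisotropic setting as in Ardila--Carles \cite{Gdb}, combined with a virial/variational analysis for the blow-up alternative. I use the local well-posedness of Proposition \ref{wp}, the existence of the ground state from Proposition \ref{main0}, the new conservation of kinetic energy (Lemma \ref{conke}), and the new global Strichartz estimates built in Section \ref{prel}.

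The first step is to establish the flow-invariance of $\cK^\pm$. Since $S[\phi(t)]=M+E+K$ is conserved and $\phi(\cdot)$ is continuous into $B^1$, the sign of $P[\phi(t)]$ can only change if $\phi(t)$ passes through $\{\vphi\ne0:P[\vphi]=0\}$. A rescaling $\vphi\mapsto\vphi_\mu(y,z)=\vphi(y,\mu z)$ (or a dilation $\mu\vphi$) together with Proposition \ref{main0} shows that any such $\vphi$ must satisfy $S[\vphi]\ge S[Q]$, contradicting $S[\phi_0]<S[Q]$. The same variational gymnastics yield, on $\cK^+$, a uniform upper bound on $\|\partial_z\phi(t)\|_{L^2}$ and on the nonlinear term by a positive multiple of $S[Q]-S[\phi_0]$; combined with the blow-up alternative of Proposition \ref{wp}, this gives global existence. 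On $\cK^-$ the reversed coercivity yields a uniform separation $P[\phi(t)]\le -\delta<0$.

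For the scattering part on $\cK^+$ when $2<\sigma<4$ I run Kenig--Merle. Let $S_c$ be the supremum of thresholds $s\le S[Q]$ below which every $\phi_0\in\cK^+$ with $S[\phi_0]<s$ scatters; small-data theory makes $S_c>0$. Assuming $S_c<S[Q]$ for contradiction, I build a linear profile decomposition along $e^{it\partial_z^2}$ in $B^1$ (using translations in $z$ only, with $y$-profiles controlled via the discrete Hermite spectrum of $H$) together with a long-time perturbation theorem in the new mixed $(t,\theta,y,z)$ Strichartz norms of Section \ref{prel}; this produces a minimal critical element $\phi_c$ whose $B^1$-orbit is precompact modulo $z$-translation. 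The rigidity is closed by a localized $z$-virial identity: coercivity on $\cK^+$ forces $P[\phi_c(t)]\ge\delta>0$ while the truncated variance $\int\chi(z/R)z^2|\phi_c|^2\,dx$ is a priori bounded on the compact orbit, yielding a contradiction and hence $\phi_c\equiv 0$.

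For the blow-up on $\cK^-$, when $z\phi_0\in L^2$ the classical virial identity gives
\[ \tfrac{d^2}{dt^2}\int_{\R^3}z^2|\phi(t)|^2\,dx = 8\,P[\phi(t)]\le -8\delta,\]
so the nonnegative variance becomes negative in finite time, forcing $T_+<\infty$ and, by a standard argument, $\|\partial_z\phi(t)\|_{L^2}\to\infty$. Without the weight I truncate in $z$ à la Ogawa--Tsutsumi, as adapted in \cite{Gdb}: the truncated second derivative is bounded above by $8P[\phi(t)]$ plus error terms controlled by the conservation laws and the uniform $B^1$ bound, which rules out a globally bounded $\|\partial_z\phi(t)\|_{L^2}$, leaving the two stated alternatives. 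The main obstacle is clearly the scattering step: producing a profile decomposition on $B^1$ compatible with the averaged nonlinearity, and proving a long-time perturbation theorem in the four-index Strichartz spaces, is delicate precisely because $\sigma>2$ forces use of the smoothing inside $e^{-i\theta H}$ rather than purely dispersive gain in $z$, which also explains the technical exclusion of the endpoint $\sigma=2$.
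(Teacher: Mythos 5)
Your overall architecture matches the paper's: variational flow-invariance of $\cK^\pm$ (via the fact that $P[\vphi]=0$, $\vphi\neq0$ forces $S[\vphi]\ge d$, which is Lemma \ref{3.2} and, at $\sigma=2$, Proposition \ref{Kpm2}), global existence from the coercivity of Lemma \ref{3.5}, a localized virial for the blow-up alternative, and Kenig--Merle (small-data scattering, profile decomposition with time and $z$-translations only, long-time perturbation in the mixed $(t,\theta,y,z)$ norms, critical element, rigidity). Two remarks on the non-scattering part: the uniform separation $P[\phi(t)]\le-\delta$ is obtained by ``reversed coercivity'' (Lemma \ref{3.6}) only for $2<\sigma<4$; at $\sigma=2$ it comes instead from the identity $P=4E$ and conservation of $E$, a case you do not address. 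Also the constant in the weighted virial is $W''=4P$ with the paper's normalization of $P$, not $8P$ (immaterial to the sign argument), and the truncated-virial contradiction needs the exterior-mass smallness of Lemma \ref{4.1} on a time window of length $\sim R$, not merely boundedness of the error by conserved quantities; since you defer to the Ogawa--Tsutsumi scheme as in \cite{Gdb}, this is acceptable as a sketch.

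The genuine gap is in the rigidity step. You claim the contradiction comes from ``$P[\phi_c(t)]\ge\delta>0$ while the truncated variance $\int\chi(z/R)z^2|\phi_c|^2\,dx$ is a priori bounded on the compact orbit.'' But the orbit of the critical element is precompact only \emph{modulo the translation parameter} $z(t)$: compactness gives no bound on a variance centered at a fixed point, and the trivial bound $\int\chi(z/R)z^2|\phi_c|^2\lesssim R^2 M$ is useless because the growth produced by $W''\ge c\delta$ only beats $R^2$ after a time of order $R$, during which the spatial support may have left the ball $\{|z|\le R\}$ (a traveling bubble defeats the fixed-center argument). The paper closes this by first proving $|z(t)|/t\to0$ via a truncated center-of-mass/momentum argument (Proposition \ref{extinc}, following \cite{Gdb}), then taking $R=R_{t_1}=\rho+\frac{\eta t_1}{4C_2}$ growing linearly in the length of the time interval and using the gradient bound $|W'(t)|\le C_2R$ rather than boundedness of $W$; integrating $W''\ge3\eta$ over $[t_0,t_1]$ then contradicts $|W'(t_1)-W'(t_0)|\le 2C_2R_{t_1}$ for $t_1$ large. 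Your proposal contains no control of $z(t)$ (nor a Galilean/zero-momentum reduction), so the rigidity argument as stated does not go through; adding the $z(t)=o(t)$ step (momentum conservation plus the precompactness) and the linear-in-time choice of $R$ repairs it.
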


\quad \\
Our work also yields the scattering result in the defocusing case. 
\begin{cor}\label{main2}
	Let $\lmd=+1$, $2<\sigma<4$, and $\phi\in C(\R, B^1)$ be the global  solution to \eqref{NLS}, $\phi(t)$ scatters in $B^1$.
\end{cor}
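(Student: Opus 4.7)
The plan is to recycle the concentration--compactness and rigidity machinery developed for Theorem \ref{main}(i), noting that the defocusing sign makes the variational input automatic. Since $\lmd=+1$, the Hamiltonian
\[
E[\phi]=\h\int_{\R^3}|\partial_z\phi|^2\,dx+\f{1}{\pi(\s+1)}\int_0^{\pi/2}\int_{\R^3}|e^{-i\theta H}\phi|^{2\s+2}\,dx\,d\theta
\]
is a sum of nonnegative terms, and together with conservation of $M$ and of the kinetic energy $K$ it delivers a uniform bound on $\|\phi(t)\|_{B^1}$. No threshold needs to be imposed: every $\phi_0\in B^1$ plays the role of a ``subthreshold'' datum, because the coercivity that in the focusing case was extracted from $S[\phi]<S[Q]$ and $P[\phi]\ge 0$ is trivially present throughout $B^1$.

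With this coercivity, the three Kenig--Merle steps proceed essentially as in the focusing proof. First, the global Strichartz estimates in the anisotropic norms of Section \ref{prel} give small-data global existence and scattering in $B^1$. Second, if the set of data from which the solution scatters is strictly smaller than $B^1$, a linear-plus-nonlinear profile decomposition for the averaged-nonlinearity flow --- the same decomposition built for Theorem \ref{main} --- produces a nontrivial critical element $\phi_c\in C(\R;B^1)$ whose orbit $\{\phi_c(t):t\in\R\}$ is precompact in $B^1$ modulo $z$-translations.

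For the rigidity step, in the defocusing case the Pohozaev-type functional
\[
P_+[\phi]:=2\int_{\R^3}|\partial_z\phi|^2\,dx+\f{2\s}{\pi(\s+1)}\int_0^{\pi/2}\int_{\R^3}|e^{-i\theta H}\phi|^{2\s+2}\,dx\,d\theta
\]
is a sum of nonnegative terms, so precompactness forces $P_+[\phi_c(t)]\ge c_0>0$ uniformly in $t$. A truncated virial computation with weight $zw(z/R)$ then yields $c_0 T\le\int_0^T P_+[\phi_c(t)]\,dt\le C(R)$ for every $T>0$, which is impossible unless $\phi_c\equiv 0$. This is exactly where the sign of $\lmd$ simplifies matters relative to the focusing case: no variational lower bound on $P_+$ has to be extracted.

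The main obstacle, as in the focusing proof, is constructing the profile decomposition and the associated stability theory in space-time norms that simultaneously exploit the one-dimensional $z$-dispersion of $e^{it\partial_z^2}$, the $\theta$-dispersion hidden inside $\Fav$, and the $y$/$z$ anisotropy; this is what makes the range $2<\s<4$ tractable despite being energy-supercritical under the isotropic $3$D scaling. Once these tools are in place from the focusing argument, the defocusing case inherits them verbatim, with the simplification that the subthreshold variational bookkeeping disappears.
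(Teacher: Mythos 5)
Your proposal is correct and follows essentially the same route as the paper, which disposes of the defocusing case in Remark \ref{scdef}: one re-runs the Kenig--Merle scheme of Sections \ref{keyprop}--\ref{proofsc} with the threshold $S_c$ defined over all of $B^1$, the coercivity being automatic from $0\le\|\psi\|_{B^1}^2\le 2S[\psi]$ when $\lmd=+1$, and the rigidity step simplified because the virial functional is a sum of nonnegative terms. The only point to keep in mind is that your truncated-virial bound with a fixed $R$ still requires the control $|z(t)|=o(t)$ of the translation parameter, exactly as in the focusing rigidity argument of Proposition \ref{extinc}, which you indeed inherit from that proof.
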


\subsection{Future prospects}
Our scattering result does not include the case when $\s=2$. In this case, equation \eqref{NLS} is mass-critical with respect to $z$. Therefore, as \cite{Sc3DNLSphp}, we need to use the space $L_z^2\Sigma_y^1$ and refer to Dodson \cite{mc1DNLS}. The case $\s=4$ may be more challenging. When $\s=4$, \eqref{NLS} is energy-critical and invariant under the scaling \eqref{si}. Even if $\lmd=+1$, the global existence of the solution is unknown.
Moreover, there are still many problems concerning the Strichartz estimates \eqref{SSQ}, \eqref{SS}, and their extensions to lower and higher dimensions.

On the other hand, our new idea and tools are expected to be applied to a variety of averaged models, such as the resonant Schr\"{o}dinger systems presented in Section \ref{NLSonWGM} and the averaged  dispersion managed NLS in Section \ref{DMNLS} as well as the limiting model of NLS with strong confinement.

\quad\\
\noindent
\textbf{Organization of the paper.} In Section \ref{prel}, we introduce the notation and preliminary estimates used in this study. In Section \ref{vari}, we present the proof of Proposition \ref{main0} and variational estimates. In Section \ref{sharpt}, we prove Theorem \ref{main} except for the scattering result. Sections \ref{keyprop} and \ref{proofsc} provide the proof of the scattering result. In Section \ref{keyprop}, we prove two key propositions: a linear profile decomposition and a long-time perturbation lemma. Finally, in Section \ref{proofsc}, we complete the proof. 

\section{Preliminaries}\label{prel}

\subsection{Notation}

We write $X\lesssim Y$ to express $X\le CY$ for a positive constant $C$. We denote $L^p(\R^d)$ as the usual Lebesgue spaces. We often use the following notation: the standard Lebesgue norm
\[\|u\|_{L^p}=\|u\|_{L_x^p} =\Big( \int_{\R^3} |u(x)|^p dx \Big)^\f{1}{p}\]
and  the partial spatial norms (recall that  $x=(y,z)\in \R^2\times \R$),
\[ \|u\|_{L_y^p} =\Big( \int_{\R^2} |u(x)|^p dy \Big)^\f{1}{p} , \qquad \|u\|_{L_z^p} =\Big( \int_{\R} |u(x)|^p dz \Big)^\f{1}{p} .\]
If $I\subset \R$ is an interval, then the mixed Lebesgue norm on $I\times\R^3$ is defined as
\[ \|u\|_{L_t^qL^p_x(I\times\R^3)}= \|u\|_{L^q(I,L^p(\R^3))}=\Big(  \int_{I} (\int_{\R^3} |u(t,x)|^p dx )^{\f{q}{p}}\Big)^{\f{1}{q}}. \]
Similarly, we denote
\[ \|u\|_{L_y^{p_1}L_z^{p_2}(\R^3)} = \Big(  \int_{\R^2} (\int_{\R} |u(x)|^{p_2} dz )^{\f{p_1}{p_2}} dy \Big)^{\f{1}{p_1}} \]
\[ \|u\|_{L_z^{p_2}L_y^{p_1}(\R^3)} = \Big(  \int_{\R} (\int_{\R^2} |u(x)|^{p_1} dy )^{\f{p_2}{p_1}} dz \Big)^{\f{1}{p_2}}. \]
When $X$ and $Y$ are different norms, we denote
\[ \|u\|_{X\cap Y}:=\|u\|_X +\|u\|_Y.\]
If there is no confusion, the integral regions are often omitted. \\

\noindent
We use the bracket $\langle \cdot \rangle =(1+| \cdot |^2)^{1/2}$.\\

For $s\ge0$, we define the Hermite Sobolev space as
\[ B^s=B_{x}^s:=\{u\in H^s(\R^3):|y|^s u\in L^2(\R^3)\}\]
\begin{equation*}
	\begin{split}
		\|u\|_{B^s}&:= \Big(  \| |\nabla_x|^s u \|_{L^2(\R^3)}^2 +\| |y|^su \|_{L^2(\R^3)}^2 \Big)^\h 
	\end{split}
\end{equation*}
and
\[ \Sigma^s=\Sigma_{x}^s:=\{u\in H^s(\R^3):|x|^s u\in L^2(\R^3)\}\]
\begin{equation*}
	\begin{split}
		\|u\|_{\Sigma^s}&:= \Big( \| |\nabla_x|^s u \|_{L^2(\R^3)}^2 +\| |x|^su \|_{L^2(\R^3)}^2 \Big)^\h  
	\end{split}
\end{equation*}
where $|\nabla_x|^s= \mathcal{F}^{-1}|\xi|^s \mathcal{F}$,  $|\nabla_y|^s= \mathcal{F}^{-1}|\eta|^s \mathcal{F}$, $\xi=(\eta, \z)\in \R^2\times \R$, and $\cF$ denotes the Fourier transforms of the corresponding variables.  
Note that by the Hermite expansion in $y$, it holds that 

\begin{equation*}
	\|u\|_{B^s}^2 \simeq_s \|u\|_{B^s}^2+ \|u\|_{L^2}^2, \qquad \|u\|_{\Sigma^s}^2 \simeq_s \|u\|_{\Sigma^s}^2+\|u\|_{L^2}^2.
\end{equation*}

Recall that $H=-\Delta_y + |y|^2$. From \cite[Lemma 2.4]{Yajima}, for $1<p<\infty$ and $s \ge 0$, we obtain the norm equivalence:
\begin{equation}\label{eqH}
	\| \langle \nabla_y \rangle^s u\|_{L_y^{p} } +\| \langle y \rangle^s u \|_{L_y^{p}} \simeq \| H^{\f{s}{2}} u \|_{L_y^{p}}.
\end{equation}
Mikhlin's multiplier theorem yields
\begin{equation}\label{Mik}
	\||\nabla_x|^s f\|_{L_x^p}\simeq \||\nabla_y|^s f\|_{L_x^p} + \||\partial_z|^s f\|_{L_x^p} , \quad \text{for all}\quad  1< p <\infty, \quad 0\le s.
\end{equation}
By combining \eqref{eqH} and \eqref{Mik}, we obtain the norm equivalence
\begin{equation}\label{eqH2}
	\|\langle \nabla_x \rangle^s f\|_{L_x^p} + \| |y| f\|_{L_x^p} \simeq \| H^{\f{s}{2}} f\|_{L_x^p} + \||\partial_z|^s f\|_{L_x^p} , \quad \text{for all}\quad  1< p <\infty, \quad 0\le s.
\end{equation}
Let
\begin{equation*}
	D:=H-\partial_z^2 = -\Delta_y + |y|^2 -\partial_z^2 = -\Delta_x + |y|^2. 
\end{equation*}
From \cite[Theorem 2.1]{N.Ben}, we obtain the norm equivalence
\begin{equation}\label{eqD}
	\|D^\f{s}{2}u\|_{L^2} \simeq \|u\|_{B^s}.
\end{equation} 
Finally, the following notation is used:
\begin{equation*}
	F(f):=\lmd|f|^{2\s}f,
\end{equation*}
\begin{equation*}
	U(t):=e^{it\partial_z^2} \quad \text{and} \quad V(\theta):=e^{-i\theta H}.
\end{equation*}
On and after Section \ref{prel}, the potential $V(z)$ does not appear in this paper, so there is no confusion.

\subsection{Preliminary estimates}
\begin{lemma}[Ordinary and exotic Strichartz's estimates for $\{U(t) \}_{t\in\R}$]\label{St1}
	\quad\\
	Let $(p_0,q_0)$ and $(p_1,q_1)$ be 1-dimensional admissible pairs;  that is, for $j=0,1$,
	\[ 2\le p_j \le \infty   \qquad  \text{and} \qquad \f{2}{q_j}+\f{1}{p_j}=\f{1}{2}.\]
	Then, for any  $T \in(0, \infty]$, we obtain
	\[ \Big\| U(t)f \Big\|_{L^{q_0}([0,T], L^{p_0})} \lesssim \| f \|_{L^2}, \]
	\[ \Big\| \int_0^{t} U(t-\tilde{t}) g(\tilde{t}, \cdot) d\tilde{t} \Big\|_{L^{q_0}([0,T], L^{p_0})}   \lesssim  \| g \|_{L^{q_1'}([0,T],L^{p_1'})}.\]
	In addition, if $p\in (2, \infty]$ and $q\in (2, \infty)$ satisfy
	\[ 0<\f{1}{q}+\f{1}{p}<\h, \]
	we have for $\tilde{q}=(\h+\f{1}{q}+\f{1}{p})^{-1}$,
	\[ \Big\| \int_0^{t} U(t-\tilde{t}) g(\tilde{t}, \cdot) d\tilde{t} \Big\|_{L^{q}([0,T], L^{p})}   \lesssim  \| g \|_{L^{\tilde{q}}([0,T],L^{p'})}.\]
\end{lemma}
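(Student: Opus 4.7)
The plan is to derive all three estimates from one-dimensional Schrödinger dispersive theory in the $z$-variable, exploiting that $U(t)=e^{it\partial_z^2}$ acts trivially on $y$. The explicit Gaussian kernel of the 1D Schrödinger propagator yields $\|U(t)f\|_{L_z^\infty}\lesssim |t|^{-1/2}\|f\|_{L_z^1}$, and interpolation with $L_z^2$ conservation gives
\[ \|U(t)f\|_{L_z^p}\lesssim |t|^{-(1/2-1/p)}\|f\|_{L_z^{p'}},\qquad 2\le p\le \infty. \]
All three assertions are then consequences of this single dispersive bound.

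For the two classical admissible estimates, I would apply the $TT^*$ method of Keel-Tao. The dispersive bound is equivalent to $\|U(t)U(s)^*\|_{L_z^{p'}\to L_z^p}\lesssim |t-s|^{-(1/2-1/p)}$, and the abstract Strichartz theorem produces the bilinear bound
\[ \Big\|\int_\R U(t-s)g(s)\,ds\Big\|_{L_t^{q_0}L_z^{p_0}}\lesssim \|g\|_{L_t^{q_1'}L_z^{p_1'}} \]
for any two admissible pairs $(p_0,q_0),(p_1,q_1)$. The homogeneous estimate is the symmetric case of this bilinear bound (taking one pair equal to $(2,\infty)$ and using duality), while the retarded form $\int_0^t$ is recovered from the Christ-Kiselev lemma; its hypothesis $q_0>q_1'$ is automatic since admissibility forces $q_0\ge 4$ and $q_1'\le 4/3$.

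For the exotic estimate, applying the pointwise-in-time dispersive bound together with Minkowski's inequality gives
\[ \Big\|\int_0^t U(t-s)g(s)\,ds\Big\|_{L_z^p}\lesssim \int_0^t|t-s|^{-(1/2-1/p)}\|g(s)\|_{L_z^{p'}}\,ds, \]
which is a convolution in time against the Riesz-type kernel $|t|^{-(1/2-1/p)}$. By the Hardy-Littlewood-Sobolev inequality this operator maps $L_t^{\tilde q}\to L_t^q$ precisely when $1/\tilde q-1/q=1-(1/2-1/p)=1/2+1/p$, which is exactly the relation defining $\tilde q=(1/2+1/q+1/p)^{-1}$. The conditions $p>2$ and $0<1/q+1/p<1/2$ guarantee $1/2-1/p\in(0,1)$ and $1<\tilde q<q<\infty$, so HLS applies, and the retarded form again follows from Christ-Kiselev since $\tilde q<q$ is immediate from $1/\tilde q-1/q=1/2+1/p>0$. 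The main subtle point is that both Christ-Kiselev and HLS require strict inequalities, which is precisely why the lemma imposes the open condition $0<1/q+1/p<1/2$ and excludes $p=2$.
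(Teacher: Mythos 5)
The paper states this lemma without proof, treating it as classical, and your argument correctly reconstructs the standard one: the homogeneous and retarded admissible estimates via the 1D dispersive bound $\|U(t)f\|_{L_z^p}\lesssim |t|^{-(1/2-1/p)}\|f\|_{L_z^{p'}}$, the $TT^*$/Keel--Tao machinery, and Christ--Kiselev (whose hypothesis $q_0>q_1'$ is indeed automatic, since every 1D admissible pair has $q\ge 4$, hence $q_1'\le 4/3$); and the exotic non-admissible estimate via the dispersive bound plus Hardy--Littlewood--Sobolev fractional integration in time, where your exponent bookkeeping $1/\tilde q-1/q=1/2+1/p$, $\alpha=1/2-1/p\in(0,1)$, $1<\tilde q<2<q<\infty$ is exactly right and is precisely where the open conditions $p>2$ and $0<1/q+1/p<1/2$ are used. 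Two minor remarks. First, for the exotic estimate the appeal to Christ--Kiselev is superfluous: after Minkowski's inequality the kernel $|t-s|^{-(1/2-1/p)}$ is nonnegative, so the retarded integral $\int_0^t$ is dominated pointwise by the full convolution over $[0,T]$ and HLS alone closes the argument. Second, since $U(t)=e^{it\partial_z^2}$ has no smoothing in $y$, the spatial norms in the lemma must be understood as norms in $z$ (possibly $L^2_y$-valued), which is exactly how you set it up and how the paper applies the lemma after Minkowski's inequality in the remaining variables; the vector-valued case is covered by your proof since the kernel acts only in $z$.
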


\begin{lemma}[\cite{StH} Ordinary and exotic Strichartz's estimates for $\{V(\theta) \}_{\theta\in\R}$]\label{St2}
	\quad\\
	Let $(p_0,q_0)$ and $(p_1,q_1)$ be 2-dimensional admissible pairs;  that is, for $j=0,1$
	\[ 2\le p_j < \infty   \qquad  \text{and} \qquad \f{2}{q_j}+\f{2}{p_j}=1.\]
	Then, for any  $T>0$, we have
	\[ \Big\| V(\theta)f \Big\|_{L^{q_0}([0,T], L^{p_0})} \lesssim (1+T)^{1/q_0} \| f \|_{L^2}, \]
	\[ \Big\| \int_0^{\theta} V(\theta-\tilde{\theta}) g(\tilde{\theta}, \cdot) d \tilde{\theta}\Big\|_{L^{q_0}([0,T], L^{p_0})}   \lesssim (1+T)^{1/q_0 +1/q_1} \| g \|_{L^{q_1'}([0,T],L^{p_1'})}.\]
	In addition, if $p,q\in (2, \infty)$ satisfy
	\[ 0<\f{1}{q}+\f{2}{p}<1,\]
	we have for $\tilde{q}=(\f{1}{q}+\f{2}{p})^{-1}$,
	\[ \Big\| \int_0^{\theta} V(\theta-\tilde{\theta}) g(\tilde{\theta}, \cdot) d \tilde{\theta}\Big\|_{L^{q}([0,T], L^{p})}   \lesssim (1+T)^{1/q +1/\tilde{q}} \| g \|_{L^{\tilde{q}}([0,T],L^{p'})}.\]
\end{lemma}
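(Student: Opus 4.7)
My plan is to derive both estimates from a dispersive bound for $V(\theta) = e^{-i\theta H}$ combined with the temporal periodicity of $V$. First, Mehler's formula for the 2-dimensional harmonic oscillator yields an explicit integral kernel whose $L_y^1\to L_y^\infty$ norm is bounded by $C|\sin(2\theta)|^{-1}$. Interpolating this with the $L^2$-unitarity of $V(\theta)$ via Riesz--Thorin gives the one-point decay
\[
\|V(\theta)f\|_{L_y^p}\lesssim |\sin(2\theta)|^{-(1-2/p)}\|f\|_{L_y^{p'}},\qquad 2\le p<\infty,
\]
and $|\sin(2\theta)|^{-\alpha}$ is locally integrable on $\R$ exactly when $\alpha<1$, i.e. $p<\infty$, which explains the strict upper bound on $p_j$ in the statement. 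On the fixed interval $[0,\pi]$ the standard $TT^*$ method combined with the one-dimensional Hardy--Littlewood--Sobolev inequality applied to the kernel $|\sin(2(\theta-\tau))|^{-(1-2/p)}$ yields the homogeneous and (via the Christ--Kiselev lemma) the inhomogeneous fixed-length Strichartz estimates under the 2-dimensional admissibility $2/q+2/p=1$.

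Next, I would lift the fixed-length bounds to arbitrary $[0,T]$ via periodicity. The spectrum of $H$ on $\R^2$ is $\{2(n+1):n\in\N_0\}\subset 2\Z$, so $V(\theta+\pi)=V(\theta)$. Partition $[0,T]$ into $K\simeq 1+T/\pi$ translates $I_k$ of $[0,\pi]$; summing the fixed-length homogeneous estimate in $\ell_k^{q_0}$ produces the factor $(1+T)^{1/q_0}$. For the inhomogeneous part, I split $\int_0^\theta=\int_0^{k\pi}+\int_{k\pi}^\theta$ for $\theta\in I_k$: the tail is handled by the fixed-length inhomogeneous estimate on $I_k$, while the head is $V(\theta-k\pi)f_k$ with $f_k=\sum_{j<k}\int_{I_j}V(-\tau)g(\tau)\,d\tau$. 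The dual homogeneous estimate on each $I_j$ followed by H\"{o}lder in $j$ with exponent $q_1$ gives $\|f_k\|_{L^2}\lesssim k^{1/q_1}\|g\|_{L^{q_1'}([0,T],L^{p_1'})}$, and a final $\ell_k^{q_0}$-summation yields the advertised factor $(1+T)^{1/q_0+1/q_1}$.

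The exotic estimate is obtained by running exactly the same $TT^*$--HLS argument without imposing admissibility: for $p>2$ the one-dimensional Hardy--Littlewood--Sobolev inequality maps $L_\tau^{\tilde q}\to L_\theta^q$ through the kernel $|\sin(2(\theta-\tau))|^{-(1-2/p)}$ precisely when $1/\tilde q-1/q=2/p$, and the hypothesis $0<1/q+2/p<1$ is exactly what ensures both that the kernel exponent $1-2/p$ lies in $(0,1)$ and that $\tilde q>1$. Combining with the period-decomposition above yields the prefactor $(1+T)^{1/q+1/\tilde q}$. I expect the most delicate step to be the bookkeeping of the $(1+T)$ powers in the inhomogeneous cases, where two separate losses---one $\ell_j^{q_1}$ loss from H\"{o}lder on the head contribution and one $\ell_k^{q_0}$ loss from summation of the outer norm---must combine precisely into the $(1+T)^{1/q_0+1/q_1}$ (and its exotic analogue) appearing in the statement; once the Mehler dispersive bound and the period-decomposition are in place, the remainder of the argument follows the standard template, as in \cite{StH}.
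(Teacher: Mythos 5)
First, a point of comparison: the paper does not prove this lemma at all --- it is quoted directly from \cite{StH} --- so your proposal is being measured against a citation rather than an in-paper argument. Your reconstruction of the fixed-interval estimates (Mehler kernel giving the $L^1\to L^\infty$ bound $|\sin 2\theta|^{-1}$, interpolation, $TT^*$ with one-dimensional Hardy--Littlewood--Sobolev applied to the kernel $|\sin 2(\theta-\tau)|^{-(1-2/p)}$, Christ--Kiselev for mixed admissible pairs) is the standard route and is sound, modulo the routine remark that $|\sin 2u|^{-\gamma}$ has finitely many singular points per period, each of local order $|u-u_0|^{-\gamma}$. Your period-$\pi$ decomposition with the head/tail splitting is also correct for the homogeneous estimate and for the admissible inhomogeneous estimate: there the head factors through $L^2$ via the homogeneous bound on $I_k$ and the dual homogeneous bound on each $I_j$, and your bookkeeping does give exactly $(1+T)^{1/q_0}$ and $(1+T)^{1/q_0+1/q_1}$.

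The gap is in the exotic estimate for general $T$. "Combining with the period-decomposition above" requires both (i) the homogeneous bound $\|V(\cdot)f_k\|_{L^{q}(I_k;L^{p})}\lesssim\|f_k\|_{L^2}$ and (ii) the dual bound $\|\int_{I_j}V(-\tau)g(\tau)\,d\tau\|_{L^2}\lesssim\|g\|_{L^{\tilde q}(I_j;L^{p'})}$. On a bounded interval, (i) holds precisely when $2/q+2/p\ge1$ (H\"older in $\theta$ from the admissible pair with the same $p$; for sub-admissible pairs it fails even locally), while (ii) is dual to the homogeneous estimate for the pair $(p,\tilde q\,')$ and, since $2/\tilde q\,'+2/p=2-2/q-2/p$, holds precisely when $2/q+2/p\le1$. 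Hence for any genuinely non-admissible pair in your exotic range exactly one of (i), (ii) fails, and the head term cannot be factored through $L^2$ as in the admissible case. One can instead estimate the head directly, pairing period intervals $(I_k,I_j)$ and using the dispersive bound plus HLS on each pair (this is available because the data exponent is $p'$, dual to the output exponent $p$), but then H\"older in $j$ costs $k^{1-1/\tilde q}=k^{1/\tilde q\,'}$ and the resulting power is $(1+T)^{1/q+1/\tilde q\,'}$, which implies the stated $(1+T)^{1/q+1/\tilde q}$ only when $\tilde q\le 2$; for $\tilde q>2$ your route does not recover the lemma as stated. For the paper's applications this is harmless, since the exotic estimate is only ever invoked on the fixed interval $[0,\pi/2]$, where your dispersive-plus-HLS argument does suffice; but as a proof of the statement for arbitrary $T$ with the advertised power, the exotic case is not established.
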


\quad\\

To obtain Proposition \ref{Ltp} in Section \ref{keyprop}, we require the following estimates.

\begin{lemma}[Combination of two Strichartz's estimates]\label{ExSt}
	Let $I\subset \R$ be an interval and $\sigma\in [2,4]$. Let $p\in [\f{2\s}{\s-1}, \f{3\s-2}{\s-1}]$, $q\in [2\s, 3\s-2]$, and $s\in[\f{3(2\s-1)(\s-2)}{2(3\s-2)(\s-1)},\f{3(\s-2)}{2(\s-1)}]$ satisfy
	\begin{equation}\label{Exi}
		\f{1}{p}=\f{1}{2\s}\Big(\f{2s(\s-1)}{3}+1\Big), \quad 	q=(\s-1)p.
	\end{equation}
	Let $G:\R\times[0,\f{\pi}{2}]\times \R^3\to \C$ be an appropriate function, and define $G_{\rm{av}}$ as
	\begin{equation}\label{Gav}
		G_{\rm{av}}(t,x):=\f{2}{\pi}\int_{0}^{\f{\pi}{2}} V(-\theta) G(t, \theta,x) d\theta.
	\end{equation}
	Then, we have 
	\begin{equation}\label{ExSt31}
		\begin{split}
			\Big\|V(\theta)\int_0^t U(t-s) G_{\rm{av}}(s)ds  &\Big\|_{L_t^{2q}L_\theta^{q}L_x^{p}(I\times[0, \f{\pi}{2}] \times \R^3)} \lesssim \| G\|_{L_t^{\tilde{q_t}}L_\theta^{\tilde{q_\theta}}L_x^{p'}(I\times[0, \f{\pi}{2}] \times \R^3)},
		\end{split}
	\end{equation}
	where $\tilde{q_t}$ and $\tilde{q_\theta}$ are given by
	\begin{equation}\label{Exitilde}
		\f{1}{\tilde{q_t}}=\f{2\s-1}{2q} + \h, \quad  	\f{1}{\tilde{q_\theta}}=\f{2\s-1}{q} .
	\end{equation}
\end{lemma}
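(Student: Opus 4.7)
The plan is to decouple the 1D Schr\"odinger propagator $U(t)$ acting on $z$ from the harmonic propagator $V(\theta)$ acting on $y$, and apply the two exotic Strichartz estimates of Lemmas~\ref{St1} and~\ref{St2} one after the other, bridged by Minkowski reorderings of the mixed Lebesgue norms. Since $U$ and $V$ commute (they act on different variables) and $G_{\rm av}$ is itself a $\theta$-average of $V(-\theta')G$, the Duhamel object at hand can be rewritten as
\begin{equation*}
V(\theta)\int_0^t U(t-s) G_{\rm av}(s)\, ds = \frac{2}{\pi}\int_0^t U(t-s) \int_0^{\pi/2} V(\theta-\theta') G(s,\theta')\, d\theta'\, ds,
\end{equation*}
i.e.\ a 1D Duhamel in $t$ composed with a full-interval $V$-convolution in $\theta$.

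Writing $L_x^p = L_y^p L_z^p$ and using the exponent ordering $p\le q=(\s-1)p\le 2q$, two Minkowski swaps (each moving the smaller exponent to the outside) give
\begin{equation*}
\|V(\theta)\Phi\|_{L_t^{2q} L_\theta^q L_x^p}\le \|V(\theta)\Phi\|_{L_\theta^q L_y^p L_t^{2q} L_z^p}.
\end{equation*}
For each fixed $(\theta,y)$ the inner $L_t^{2q}L_z^p$-norm is a pure 1D Duhamel, so I would apply the exotic part of Lemma~\ref{St1} with the pair $(2q,p)$ in $(t,z)$. The exotic admissibility $0<\tfrac{1}{2q}+\tfrac{1}{p}<\tfrac12$ reduces to $p>\tfrac{2\s-1}{\s-1}$, which is ensured by $p\ge \tfrac{2\s}{\s-1}$, and the dual time exponent is precisely $\tilde{q_t}$ from \eqref{Exitilde}, yielding
\begin{equation*}
\|V(\theta)\Phi\|_{L_\theta^q L_y^p L_t^{2q} L_z^p}\lesssim \|V(\theta) G_{\rm av}\|_{L_\theta^q L_y^p L_s^{\tilde{q_t}} L_z^{p'}}.
\end{equation*}

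A further Minkowski step pushes $L_s^{\tilde{q_t}}$ to the outermost slot, which is legitimate because $\tfrac{1}{\tilde{q_t}}\ge \tfrac12$ forces $\tilde{q_t}\le q$ and $\tilde{q_t}\le p$. The remaining inner $L_\theta^q L_y^p$-norm, for each fixed $(s,z)$, is exactly the $V$-convolution $\frac{2}{\pi}\int_0^{\pi/2}V(\theta-\theta') G(s,\theta',\cdot,z)\,d\theta'$, to which I would apply the exotic 2D Strichartz from Lemma~\ref{St2} with the pair $(q,p)$. The exotic admissibility $\tfrac{1}{q}+\tfrac{2}{p}<1$ is once again guaranteed by the range of $p$, and the dual exponent is precisely $\tilde{q_\theta}$ from \eqref{Exitilde}. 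Since $L_y^{p'}L_z^{p'}=L_x^{p'}$, this closes the chain:
\begin{equation*}
\|V(\theta) G_{\rm av}\|_{L_s^{\tilde{q_t}}L_\theta^q L_y^p L_z^{p'}}\lesssim \|G\|_{L_t^{\tilde{q_t}}L_\theta^{\tilde{q_\theta}}L_x^{p'}}.
\end{equation*}

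The main obstacle is the bookkeeping of the Minkowski reorderings: each swap must move the smaller exponent outward to produce an upper bound, which requires systematically verifying the pointwise inequalities among $\tilde{q_t},\tilde{q_\theta},p',p,q,2q$ throughout the parameter range specified by \eqref{Exi}. A secondary subtlety is that Lemma~\ref{St2} is stated for the retarded integral $\int_0^\theta$, whereas the convolution here spans the full bounded interval $[0,\pi/2]$; this gap is closed by splitting $\int_0^{\pi/2}=\int_0^\theta+\int_\theta^{\pi/2}$ and invoking time-reversal symmetry of the harmonic propagator (or the Christ-Kiselev lemma).
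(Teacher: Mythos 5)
Your proposal reproduces the paper's proof essentially verbatim: Minkowski reorderings of the mixed norms, the exotic 1-dimensional Strichartz estimate of Lemma \ref{St1} applied in $(t,z)$ with the pair $(2q,p)$ giving the dual exponent $\tilde{q_t}$, and then the exotic 2-dimensional estimate of Lemma \ref{St2} applied in $(\theta,y)$ with the pair $(q,p)$ giving $\tilde{q_\theta}$, exactly as in the paper. The one item you defer to ``bookkeeping'' — the inequality $\tilde{q_\theta}\le p'$ needed for the final Minkowski swap placing $L_\theta^{\tilde{q_\theta}}$ outside $L_z^{p'}$ — is precisely the point where the hypotheses $s\ge \frac{3(2\sigma-1)(\sigma-2)}{2(3\sigma-2)(\sigma-1)}$ and $p\le \frac{3\sigma-2}{\sigma-1}$ enter, and it is the single verification the paper records in the last line of its proof.
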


\begin{proof}
	From Minkowski's inequality and the last estimate in Lemma \ref{St1}, we have
	\begin{equation}
		\begin{split}
			&\Big\|V(\theta)\int_0^t U(t-s) G_{\rm{av}}(s)ds  \Big\|_{L_t^{2q}L_\theta^{q}L_x^{p}(I\times[0, \f{\pi}{2}] \times \R^3)}  \\
			\lesssim & \Big\|\int_0^t U(t-s) V(\theta)G_{\rm{av}}(s)ds  \Big\|_{L_\theta^{q}L_y^{p}L_t^{2q}L_z^{p}} \\
			\lesssim & \|V(\theta)G_{\text{av}}\|_{L_\theta^{q} L_y^{p} L_t^{\tilde{q_t}}L_z^{p'}}= \Big\|\f{2}{\pi}\int_{0}^{\f{\pi}{2}} V(\theta-\tilde{\theta})G(\cdot,\tilde{\theta}, \cdot)) d\tilde{\theta}\Big\|_{L_\theta^{q} L_y^{p} L_t^{\tilde{q_t}}L_z^{p'}} \\
		\end{split}
	\end{equation}
	We use Minkowski's inequality again and the last statement in Lemma \ref{St2} to obtain
	\begin{equation}
		\begin{split}
			\Big\|\f{2}{\pi}\int_{0}^{\f{\pi}{2}} V(\theta-\tilde{\theta})G(\cdot,\tilde{\theta}, \cdot) d\tilde{\theta}\Big\|_{ L_t^{\tilde{q_t}}L_z^{p'}L_\theta^{q} L_y^{p}} 
			&\lesssim \| G\|_{L_t^{\tilde{q_t}}L_z^{p'}L_\theta^{\tilde{q_\theta}} L_y^{p'}} \lesssim \| G\|_{L_t^{\tilde{q_t}}L_\theta^{\tilde{q_\theta}} L_x^{p'}}.
		\end{split}
	\end{equation}
	As $\f{3(2\s-1)(\s-2)}{2(3\s-2)(\s-1)}\le s$ and $p\le \f{3\s-2}{\s-1}$, we have $\tilde{q_\theta}\le p'$.
\end{proof}

\begin{prop}[Strichartz's estimate for the averaged nonlinearity]\label{ExSt2}
	Let $I\subset \R$ be an interval and $\sigma\in [2,4]$. Let $p\in [\f{2\s}{\s-1}, \f{3\s-2}{\s-1}]$, $q\in [2\s, 3\s-2]$, and $s\in[\f{3(2\s-1)(\s-2)}{2(3\s-2)(\s-1)},\f{3(\s-2)}{2(\s-1)}]$ satisfy \eqref{Exi}. Let $X= I_d, H^{\f{s'}{2}}, |\partial_z|^{s'}$ for $0< s'\le1$. Then, we have
	\begin{equation*}
		\begin{split}
			\Big\|&XV(\theta)\int_0^t U(t-s) \Fav(\phi(s))ds  \Big\|_{L_t^{2q}L_\theta^{q}L_x^{p}(I\times[0, \f{\pi}{2}] \times \R^3)} \\
			&\lesssim \|XV(\theta)\phi\|_{L_t^{2q} L_\theta^{q}L_x^{p}(I\times[0, \f{\pi}{2}] \times \R^3)} \|H^\h \phi \|_{L_t^4 L_z^\infty L_y^2(I\times[0, \f{\pi}{2}] \times \R^3)}^2 \|V(\theta)\phi\|_{L_t^{2q} L_\theta^{q} L_x^{p_0}(I\times[0, \f{\pi}{2}] \times \R^3)}^{2\s-2},
		\end{split}
	\end{equation*}
	where, $p_0=(1/p-s/3)^{-1}$.
\end{prop}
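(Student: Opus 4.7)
The plan is to reduce the estimate, via Lemma \ref{ExSt}, to a multilinear bound on the nonlinearity, which is then closed by Hölder and $2$D Sobolev embedding in $y$. Since each $X\in\{I_d,H^{s'/2},|\partial_z|^{s'}\}$ commutes with both $U(t)=e^{it\partial_z^2}$ and $V(\theta)=e^{-i\theta H}$, we first move $X$ inside and write
\[XV(\theta)\int_0^t U(t-s)\Fav(\phi(s))\,ds = V(\theta)\int_0^t U(t-s)\bigl(XF(V(\cdot)\phi)\bigr)_{\rm{av}}(s)\,ds,\]
where $F(u)=\lmd|u|^{2\s}u$ and the averaging is as in \eqref{Gav}. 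Applying Lemma \ref{ExSt} with $G(t,\theta,x)=XF(V(\theta)\phi)(t,x)$ reduces the task to controlling $\|XF(V(\theta)\phi)\|_{L_t^{\tilde{q_t}}L_\theta^{\tilde{q_\theta}}L_x^{p'}}$ by the right-hand side of the proposition.

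For $X=I_d$ this is a direct multilinear estimate on $|V(\theta)\phi|^{2\s+1}$; for $X=H^{s'/2}$ or $|\partial_z|^{s'}$ we invoke the fractional Leibniz rule (Kato--Ponce), combined with the norm equivalence \eqref{eqH2} when $X=H^{s'/2}$, to distribute the $s'$-derivative onto a single factor, yielding one copy of $XV(\theta)\phi$ and $2\s$ copies of $V(\theta)\phi$. Hölder in $t$ splits as $\f{1}{\tilde{q_t}}=\f{1}{2q}+\f{2}{4}+\f{2\s-2}{2q}$ (one factor in $L_t^{2q}$, two factors in $L_t^4$, and the remaining $2\s-2$ factors again in $L_t^{2q}$), which matches $\f{1}{\tilde{q_t}}=\f{2\s-1}{2q}+\h$ from \eqref{Exitilde}. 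Because the unitarity of $V(\theta)$ on $L_y^2$ and its commutation with $H$ imply that $\|H^{1/2}V(\theta)\phi\|_{L_y^2}=\|H^{1/2}\phi\|_{L_y^2}$ is independent of $\theta$, the two middle factors may be placed in $L_\theta^\infty$, so Hölder in $\theta$ collapses to $\f{1}{\tilde{q_\theta}}=\f{1}{q}+\f{2\s-2}{q}=\f{2\s-1}{q}$.

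For the spatial part we split $L_x^{p'}=L_y^{p'}L_z^{p'}$. In the $z$-direction, placing the two middle factors in $L_z^\infty$, the $XV(\theta)\phi$ factor in $L_z^p$, and the remaining $2\s-2$ factors in $L_z^{p_0}$ gives the Hölder sum $\f{1}{p}+\f{2\s-2}{p_0}$, which by direct computation from $\f{1}{p_0}=\f{1}{p}-\f{s}{3}$ and the relation $\f{1}{p}=\f{1}{2\s}(\f{2s(\s-1)}{3}+1)$ in \eqref{Exi} equals $1-\f{1}{p}=\f{1}{p'}$. The main obstacle is the outer $L_y^{p'}$ estimate: it requires combining the $L_y^p$ norm of $XV(\theta)\phi$, the $L_y^{p_0}$ norm of each remaining copy of $V(\theta)\phi$, and bounding the two middle factors of $V(\theta)\phi$ in a higher $L_y$ integrability by the $L_y^2$ norm of $H^{1/2}\phi$. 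This is accomplished by $2$D Sobolev/Gagliardo--Nirenberg embedding in $y$ applied after the $L_z$-reduction, and the admissible range $s\in[\f{3(2\s-1)(\s-2)}{2(3\s-2)(\s-1)},\f{3(\s-2)}{2(\s-1)}]$ is precisely what ensures that the resulting $y$-exponents for the $2\s-2$ factors in $L_y^{p_0}$ and the two middle factors upgraded by Sobolev from $L_y^2$ match $\f{1}{p'}$ consistently.
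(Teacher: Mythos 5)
Your overall route is the paper's: commute $X$ through $U$, $V$ and the averaging, apply Lemma \ref{ExSt} with $G=X\big(|V(\theta)\phi|^{2\s}V(\theta)\phi\big)$, put the derivative on one factor (for the non-integer power $|u|^{2\s}u$ this is the fractional \emph{chain} rule rather than Kato--Ponce, used together with \eqref{eqH} and $[H^{s'/2},V(\theta)]=0$, but the outcome is the same), and your H\"older bookkeeping in $t$ and $\theta$, including the use of unitarity to place the $H^{\h}$-factors in $L_\theta^\infty$, matches \eqref{Exitilde} exactly as in the paper. The genuine gap is in the spatial step. Since $\f{1}{p'}=\f1p+\f{2\s-2}{p_0}$ (this is \eqref{ptilde}, and it holds automatically from \eqref{Exi} and the definition of $p_0$ for every $s$ in the stated range -- the lower endpoint of that range is there to guarantee $\tilde{q_\theta}\le p'$ in Lemma \ref{ExSt}, not to balance $y$-exponents), the three-factor H\"older you describe leaves \emph{zero} exponent budget for the two ``middle'' copies: they would have to sit in $L_z^\infty L_y^\infty$, and the bound you invoke, $\|f\|_{L_y^\infty(\R^2)}\lesssim\|H^{\h}f\|_{L_y^2}$, is exactly the failing endpoint of two-dimensional Sobolev embedding ($H^1(\R^2)\not\hookrightarrow L^\infty(\R^2)$, and the harmonic-oscillator weight does not rescue it: a cut-off $\log\log(1/|y|)$ profile is a counterexample). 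So the $L_y$-step as you wrote it does not close.

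What does close it -- and is what the paper's one-line appeal to Gagliardo--Nirenberg means -- is an interpolation applied to a \emph{single} norm rather than a product H\"older with two factors at $L_y^\infty$: for each fixed $(t,\theta,z)$,
\[
\|V(\theta)\phi(\cdot,z)\|_{L_y^{\f{2\s p_0}{2\s-2}}}\lesssim \|\nabla_y V(\theta)\phi(\cdot,z)\|_{L_y^{2}}^{\f{1}{\s}}\,\|V(\theta)\phi(\cdot,z)\|_{L_y^{p_0}}^{\f{\s-1}{\s}},
\]
a legitimate non-endpoint Gagliardo--Nirenberg inequality because the interpolation exponent is $\f1\s<1$. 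Raising to the power $2\s$, taking $\sup_z$ on the gradient factor and integrating the remainder in $z$ (note $\f{2\s p_0}{2\s-2}\cdot\f{\s-1}{\s}=p_0$, so the $z$-integration reproduces the isotropic $L_x^{p_0}$ norm) yields
\[
\big\||V(\theta)\phi|^{2\s}\big\|_{L_x^{\f{p_0}{2\s-2}}}\lesssim \|H^{\h}V(\theta)\phi\|_{L_z^\infty L_y^2}^{2}\,\|V(\theta)\phi\|_{L_x^{p_0}}^{2\s-2},
\]
i.e.\ the ``two factors'' of $\|H^{\h}\phi\|_{L_t^4L_z^\infty L_y^2}$ arise as the exponent $2\s\cdot\f1\s=2$ on the gradient term of the interpolation, not as two of the $2\s$ pointwise copies pushed into $L_y^\infty$. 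With this replacement (and the chain rule in place of Leibniz), your argument coincides with the paper's proof; the $t$- and $\theta$-H\"older you already set up then gives the stated estimate.
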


\begin{proof}
	From Lemma \ref{ExSt}, we obtain
	\begin{equation*}
		\begin{split}
			\Big\|&X V(\theta)\int_0^t U(t-s) \Fav(\phi(s))ds  \Big\|_{L_t^{2q}L_\theta^{q}L_x^{p}(I\times[0, \f{\pi}{2}] \times \R^3)} \lesssim  \| X(|V(\theta)\phi|^{2\s}V(\theta)\phi)\|_{L_t^{\tilde{q_t}}L_\theta^{\tilde{q_\theta}} L_x^{p'}},
		\end{split}
	\end{equation*}
	where $\tilde{q_t}$ and $\tilde{q_\theta}$ are the same as \eqref{Exitilde}. 
	From the definition of $p$, we see that it holds
	\begin{equation}\label{ptilde}
		\f{1}{p'}=\f{1}{p}+2(\f{1}{2}-\f{1}{2}) + (2\s-2)(\f{1}{p}-\f{s}{3})=  \f{1}{p}+\f{2\s-2}{p_0}.
	\end{equation}
	Note that \eqref{Exitilde} and \eqref{ptilde}, when $X=I_d$, we use  H\"{o}lder's inequality and Gagliardo-Nirenberg's inequality to obtain
	\begin{equation*}
		\begin{split}
			\| X(|  V(\theta)\phi|^{2\s}V(\theta)\phi)\|_{L_t^{\tilde{q_t}}L_\theta^{\tilde{q_\theta}} L_x^{p'}}
			&\lesssim \|X V(\theta)\phi\|_{L_t^{2q} L_\theta^{q}L_x^{p}} \||  V(\theta)\phi|^{2\s}\|_{L_t^{\f{1}{\f{\s-1}{q}+\h}} L_\theta^{\f{q}{2\s-2}}L_x^{\f{p_0}{2\s-2}}} \\
			&\lesssim \|X V(\theta)\phi\|_{L_t^{2q} L_\theta^{q}L_x^{p}} \|H^\h V(\theta) \phi \|_{L_t^4 L_\theta^\infty L_z^\infty L_y^2}^2 \|V(\theta)\phi\|_{L_t^{2q} L_\theta^{q} L_x^{p_0}}^{2\s-2}\\
			&\lesssim \|X V(\theta)\phi\|_{L_t^{2q} L_\theta^{q}L_x^{p}} \|H^\h  \phi \|_{L_t^4 L_z^\infty L_y^2}^2 \|V(\theta)\phi\|_{L_t^{2q} L_\theta^{q} L_x^{p_0}}^{2\s-2}.
		\end{split}
	\end{equation*}	
	When $X=H^{\f{s'}{2}}$, we use \eqref{eqH} and the fractional chain rule in $y$ to obtain the first inequality. Note that $[H^{s'/2}, V(\theta)]=0$ holds for all $s'>0$. In the case $X=|\partial_z|^{s'}$, we use the fractional chain rule in $z$.
\end{proof}

\begin{remark}\label{ind}
	To determine the aforementioned Lebesgue indices, we consider the following simultaneous equations:

	\begin{equation*}
		1-\f{1}{p_y}= \f{1}{p_y}+2(\f{1}{r_y}-\f{1}{2}) + (2\s-2)(\f{1}{p_y}-\f{s}{3})
	\end{equation*}
	\begin{equation*}
		1-\f{1}{p_z}= \f{1}{p_z}+ \f{2}{r_z} + (2\s-2)(\f{1}{p_z}-\f{s}{3})
	\end{equation*}
	\begin{equation*}
		\f{1}{q_t}+1=\h(1-\f{2}{p_z})+\f{1}{q_t}+\f{2}{r_t}+ \f{2\s-2}{q_t}
	\end{equation*}
	\begin{equation*}
		\f{1}{q_\theta}+1=1-\f{2}{p_y}+\f{1}{q_\theta} + \f{2}{r_\theta} +\f{2\s-2}{q_\theta}.
	\end{equation*}
	These equations are inspired by the scale invariance when $\sigma=4$ (and $s=1$), cf.~{\rm\cite{K}}. 
	To simplify the spatial indices, we restrict $p_y=p_z(=:p)$. Moreover, in order to make $r_y$ an element of a 2-dimensional admissible pair, we impose $r_y\ge2$. Subsequently, we set $r_y=2$ and $r_z=\infty$. Then, by the first and second equations, we obtain the first identity in \eqref{Exi}. To make the pair $(r_z, r_t)$ 1-dimensional admissible, we set $r_t=4$. Similarly, we set $r_\theta=\infty$. Then, from the third and fourth equations, we obtain $q_\theta=(\s-1)p$ and $q_t=2(\s-1)p$. These equations yield the second identity in \eqref{Exi}. Moreover, if we consider $\f{1}{q_\theta}+\f{1}{p}\le\h$, we can determine the upper endpoints of $p$, $q$, and $s$. Regarding the lower endpoints, refer to the last line of the proof of Lemma \ref{ExSt}.

\end{remark}

\begin{remark}\label{indad}
	In particular, if we set $s=\f{3(\s-2)}{2(\s-1)}$ in \eqref{Exi}, we obtain 
	\begin{equation}
		p=\f{2\s}{\s-1}, \quad q=2\s, \quad s=\f{3(\s-2)}{2(\s-1)}.
	\end{equation} 
	Then, $(p,2q)$ is a 1-dimensional admissible pair and $(p,q)$ is a 2-dimensional admissible pair.
\end{remark}
\quad

When $s=\f{3(\s-2)}{2(\s-1)}$, we can easily extend Lemma \ref{ExSt} and Proposition \ref{ExSt2} to the following lemmas, respectively.

\begin{lemma}\label{St3}
	Let $I\subset \R$ be an interval, $(p_z, q_t)$ and $(\tilde{p}_z, \tilde{q}_t)$ be 1-dimensional admissible pairs, and $(\tilde{p}_y, \tilde{q}_\theta)$ be a 2-dimensional admissible pair.
	Let $G:\C\to \C$ be an appropriate function, and $G_{\rm{av}}$ be \eqref{Gav}.
	If $\tilde{p}_z \le \tilde{q}_\theta$, then we have 
	\begin{equation*}
		\begin{split}
			\Big\|\int_0^t U(t-s) G_{\rm{av}}(s)ds  &\Big\|_{L_t^{q_t}L_z^{p_z}L_y^2(I\times[0, \f{\pi}{2}] \times \R^3)} \lesssim \| G\|_{L_t^{\tilde{q_t}'}L_\theta^{\tilde{q_\theta}'}L_z^{\tilde{p_z}'}L_y^{\tilde{p_y}'}(I\times[0, \f{\pi}{2}] \times \R^3)}.
		\end{split}
	\end{equation*}
\end{lemma}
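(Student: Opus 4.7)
My plan is to reduce the estimate to the inhomogeneous 1D Strichartz estimate for $U(t)$ in $(t,z)$ from Lemma \ref{St1} combined with the dual of the homogeneous 2D Strichartz estimate for $V(\theta)$ in $(\theta,y)$ from Lemma \ref{St2}, using Minkowski's inequality repeatedly to reshuffle the four nested Lebesgue norms into the right order. The crucial structural observation is that $U(t)$ and $V(\theta)$ commute, since they act on different spatial variables, and $V(\theta)$ is an isometry on $L_y^2$, which is the innermost norm on the left-hand side.

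First, since $q_t, p_z \ge 2$, Minkowski's inequality lets me move $L_y^2$ to the outside:
\begin{equation*}
\Big\|\int_0^t U(t-s) G_{\rm{av}}(s)\,ds\Big\|_{L_t^{q_t} L_z^{p_z} L_y^2} \le \Big\|\int_0^t U(t-s) G_{\rm{av}}(s)\,ds\Big\|_{L_y^2 L_t^{q_t} L_z^{p_z}}.
\end{equation*}
Because $U(t-s)$ acts only in $z$, I can apply the inhomogeneous 1D Strichartz estimate of Lemma \ref{St1} pointwise in $y$ with the admissible pairs $(p_z,q_t)$ and $(\tilde p_z,\tilde q_t)$, obtaining the bound by $\|G_{\rm{av}}\|_{L_y^2 L_t^{\tilde q_t'} L_z^{\tilde p_z'}}$. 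Since $\tilde q_t',\tilde p_z'\le 2$, Minkowski again pushes $L_y^2$ back to the innermost slot, giving
\begin{equation*}
\|G_{\rm{av}}\|_{L_y^2 L_t^{\tilde q_t'} L_z^{\tilde p_z'}} \le \|G_{\rm{av}}\|_{L_t^{\tilde q_t'} L_z^{\tilde p_z'} L_y^2}.
\end{equation*}

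Next, recalling $G_{\rm{av}}(t,y,z) = \tfrac{2}{\pi}\int_0^{\pi/2} V(-\theta)G(t,\theta,y,z)\,d\theta$, the dual of the homogeneous 2D Strichartz estimate for $V(\theta)$ on $[0,\pi/2]$ (absorbing the $(1+\pi/2)^{1/\tilde q_\theta}$ constant) yields, pointwise in $(t,z)$,
\begin{equation*}
\|G_{\rm{av}}(t,\cdot,z)\|_{L_y^2} \lesssim \|G(t,\cdot,\cdot,z)\|_{L_\theta^{\tilde q_\theta'} L_y^{\tilde p_y'}}.
\end{equation*}
Taking $L_t^{\tilde q_t'} L_z^{\tilde p_z'}$ in both sides bounds the previous quantity by $\|G\|_{L_t^{\tilde q_t'} L_z^{\tilde p_z'} L_\theta^{\tilde q_\theta'} L_y^{\tilde p_y'}}$. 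The final step is to swap $L_z^{\tilde p_z'}$ and $L_\theta^{\tilde q_\theta'}$: by Minkowski's inequality this swap is legal precisely when the outer exponent dominates the inner one, i.e.\ $\tilde p_z' \ge \tilde q_\theta'$, which is equivalent to the hypothesis $\tilde p_z\le \tilde q_\theta$. Doing so produces the target norm $\|G\|_{L_t^{\tilde q_t'} L_\theta^{\tilde q_\theta'} L_z^{\tilde p_z'} L_y^{\tilde p_y'}}$ and finishes the argument. The only real subtlety is tracking which Minkowski swaps are allowed by the exponents; the single nontrivial restriction $\tilde p_z\le\tilde q_\theta$ of the lemma is dictated exactly by the last of these swaps.
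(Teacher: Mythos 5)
Your proof is correct and is essentially the paper's argument: the paper only says the proof is ``similar to Lemma \ref{ExSt}'', and your chain of Minkowski swaps, the inhomogeneous 1D Strichartz estimate for $U(t)$ applied pointwise in $y$, and the dual homogeneous 2D Strichartz estimate for $V(\theta)$ applied pointwise in $(t,z)$ is exactly that adaptation. You also correctly pinpoint that the hypothesis $\tilde{p}_z\le\tilde{q}_\theta$ is what licenses the final exchange of $L_z^{\tilde{p}_z'}$ and $L_\theta^{\tilde{q}_\theta'}$, which is the same role the condition $\tilde{q}_\theta\le p'$ plays in the proof of Lemma \ref{ExSt}.
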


\begin{proof} The proof is similar to Lemma \ref{ExSt}.
\end{proof}

\begin{lemma}\label{St4}
	Let $I\subset \R$ be an interval, $(p_z, q_t)$ be a 1-dimensional admissible pair.
	Let $X= I_d, H^{\f{s'}{2}}, |\partial_z|^{s'}$ for $0<s'\le 1$. Then, we have 
	\begin{equation}\label{St}
		\begin{split}
			&\Big\|X\int_0^t U(t-s) F_{\rm{av}} (\phi(s))ds  \Big\|_{L_t^{q_t}L_z^{p_z}L_y^{2}(I\times[0, \f{\pi}{2}] \times \R^3)} \\
			\lesssim& \|X V(\theta) \phi\|_{L_t^{2q}L_\theta^{q}L_x^p(I\times[0, \f{\pi}{2}] \times \R^3)} \|H^\f{1}{2}\phi\|_{L_t^{4}L_z^{\infty}L_y^2 (I\times \R^3)}^2  \| V(\theta) \phi\|_{L_t^{2q}L_\theta^{q}L_x^{p_0}(I\times[0, \f{\pi}{2}] \times \R^3)}^{2\s-2}, 
		\end{split}
	\end{equation}
	where indices $p$, $q$, $r$, $s$ are given in Remark \ref{indad}.
\end{lemma}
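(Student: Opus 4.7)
My plan is to mirror the proof of Proposition~\ref{ExSt2} step by step, with Lemma~\ref{St3} replacing Lemma~\ref{ExSt}. First, I would commute $X$ past the propagator $U(t-s) = e^{i(t-s)\partial_z^2}$ and the inner twist $V(-\theta) = e^{i\theta H}$ appearing in $F_{\rm av}$: $H^{s'/2}$ commutes with $H$ (hence with $V(\theta)$) by functional calculus and with $U$ because $H$ acts in $y$ and $U$ in $z$, while $|\partial_z|^{s'}$ commutes with $V(\theta)$ (different variables) and with $U$ (functional calculus). This reduces the left-hand side to $\int_0^t U(t-s)\,\widetilde G_{\rm av}(s)\,ds$ with $\widetilde G(s,\theta,x) := X\bigl(|V(\theta)\phi(s)|^{2\sigma}V(\theta)\phi(s)\bigr)$.

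Next, I would invoke Lemma~\ref{St3} with the left-hand admissible pair $(p_z, q_t)$ and the right-hand parameters $\tilde p_y = \tilde p_z = p$, $\tilde q_\theta = q$, $\tilde q_t = 2q$, where $(p,q)$ are the scale-critical exponents of Remark~\ref{indad}. These indices form 2D and 1D admissible pairs, respectively, and the hypothesis $\sigma \ge 2$ ensures $\tilde p_z = p \le q = \tilde q_\theta$, verifying the sole compatibility condition of Lemma~\ref{St3}. The resulting bound on the left-hand side is $\|\widetilde G\|_{L_t^{(2q)'} L_\theta^{q'} L_x^{p'}}$.

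The third and last step is to dominate this quantity by the right-hand side of the lemma, and here I would simply repeat the H\"older and Gagliardo-Nirenberg argument of Proposition~\ref{ExSt2}: isolate one factor of $XV(\theta)\phi$ in $L_t^{2q}L_\theta^q L_x^p$ (for $X \ne I_d$ using the fractional chain rule in $y$ or $z$ and $[X,V(\theta)]=0$), and split the remaining $2\sigma$ factors into two copies of $H^{1/2}V(\theta)\phi$ controlled in $L_t^4 L_\theta^\infty L_z^\infty L_y^2$ and $2\sigma-2$ copies of $V(\theta)\phi$ in $L_t^{2q}L_\theta^q L_x^{p_0}$. The isometry of $V(\theta)$ on $L_y^2$ together with $[H^{1/2},V(\theta)]=0$ then collapses the $L_\theta^\infty$ norm and replaces $H^{1/2}V(\theta)\phi$ by $H^{1/2}\phi$, giving the claimed right-hand side.

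The main obstacle is conceptually the same as in Proposition~\ref{ExSt2}: the anisotropic H\"older balance in $x$, where the two $H^{1/2}\phi$ factors live in $L_z^\infty L_y^2$ while the other factors live in isotropic $L_x^p$ and $L_x^{p_0}$; the identity $1/p' = 1/p + 2(1/2 - 1/2) + (2\sigma-2)/p_0$ closes this balance thanks to the specific choice $p_0 = (1/p - s/3)^{-1}$ dictated by Remark~\ref{indad}. Since that bookkeeping was already established for Proposition~\ref{ExSt2}, the present extension is, as the author indicates, essentially routine.
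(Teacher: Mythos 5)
Your proposal is correct and is essentially the paper's intended argument: the paper's proof of this lemma is literally ``combine the proof of Lemma \ref{St3} with Proposition \ref{ExSt2}'', and your index choices ($\tilde p_y=\tilde p_z=p$, $\tilde q_\theta=q$, $\tilde q_t=2q$, with $p\le q$ from $\sigma\ge2$) make the dual norm produced by Lemma \ref{St3} coincide with the norm $L_t^{(2q)'}L_\theta^{q'}L_x^{p'}$ already handled by the H\"older/Gagliardo--Nirenberg step of Proposition \ref{ExSt2}. The commutation of $X$ with $U$ and $V(\theta)$ and the fractional chain rule usage are exactly as in the paper, so no gap remains.
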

\begin{proof}
	 Combine the proof of Lemma \ref{St3} with Proposition \ref{ExSt2}.
\end{proof}

Next, we provide the following energy estimate.

\begin{lemma}\label{bdnl}
	Let $1\le \s\le 4$. For any $\psi\in B^1$, we have
	\begin{equation}\label{ieqnl}
		\|V(\theta)\psi\|_{L_{\theta,x}^{2\s+2}([0,\f{\pi}{2}]\times\R^3)} \lesssim \|\psi\|_{L^2}^{\f{4-\s}{2(\s+1)}}\|\psi\|_{B^1}^{\f{3\s-2}{2(\s+1)}}.
	\end{equation}	
\end{lemma}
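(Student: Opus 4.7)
My plan is to combine the 2-dimensional Strichartz estimate for $V(\theta)$ (Lemma~\ref{St2}) with a 1D Sobolev embedding in $z$ and a 2D Hermite-Sobolev embedding in $y$, so as to absorb the entire left-hand $L^{2\sigma+2}$ exponent into a fractional-derivative $L^2_x$ norm of $\psi$, and then bound that $L^2_x$ norm by interpolation between $L^2$ and $B^1$. Set
\[ (p_0,q_0):=\Big(\tfrac{2(\sigma+1)}{\sigma},\,2\sigma+2\Big), \quad s_y:=\tfrac{\sigma-1}{\sigma+1}, \quad s_z:=\tfrac{\sigma}{2(\sigma+1)}, \quad s:=s_y+s_z=\tfrac{3\sigma-2}{2(\sigma+1)}. \]
Here $(p_0,q_0)$ is a 2D admissible pair with $q_0=2\sigma+2$ matching the target $\theta$-exponent, $s_y$ is the Sobolev deficit between $L^{p_0}_y$ and $L^{2\sigma+2}_y$ on $\R^2$, and $s_z$ is that between $L^2_z$ and $L^{2\sigma+2}_z$ on $\R$. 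The hypothesis $1\le\sigma\le 4$ forces $s_y,s_z\ge 0$ and $s\le 1$; moreover $1-s=\tfrac{4-\sigma}{2(\sigma+1)}$ is precisely the exponent of $\|\psi\|_{L^2}$ claimed in \eqref{ieqnl}.

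Applying 1D Sobolev in $z$ for each fixed $(\theta,y)$, then 2D Hermite-Sobolev (via \eqref{eqH}) in $y$ for each fixed $\theta$ in the Bochner sense with values in $L^2_z$, and using that $H^{s_y/2}$, $|\partial_z|^{s_z}$, and $V(\theta)$ all commute, the target reduces to
\[ \|V(\theta)\psi\|_{L^{2\sigma+2}_{\theta,x}}\lesssim \big\|V(\theta)H^{s_y/2}|\partial_z|^{s_z}\psi\big\|_{L^{2\sigma+2}_\theta L^{p_0}_y(L^2_z)}. \]
The $L^2_z$-valued version of Lemma~\ref{St2} with admissible pair $(p_0,q_0)$---which follows verbatim from the scalar proof because $V(\theta)$ is a scalar Mehler-kernel convolution in $y$, so the underlying dispersive estimate passes through Bochner norms by Minkowski---then gives
\[ \|V(\theta)\psi\|_{L^{2\sigma+2}_{\theta,x}}\lesssim \big\|H^{s_y/2}|\partial_z|^{s_z}\psi\big\|_{L^2_x}. \]

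For the final interpolation, I expand $\psi$ in the Hermite eigenbasis of $H$ in $y$ and take the Fourier transform in $z$, so that the spectral symbol of $H^{s_y/2}|\partial_z|^{s_z}$ becomes $\lambda^{s_y}|\zeta|^{2s_z}$. Young's inequality $a^{s_y/s}b^{s_z/s}\le\tfrac{s_y}{s}a+\tfrac{s_z}{s}b$ applied with $a=\lambda^{s}$ and $b=|\zeta|^{2s}$ yields $\lambda^{s_y}|\zeta|^{2s_z}\lesssim \lambda^{s}+|\zeta|^{2s}$, whence
\[ \big\|H^{s_y/2}|\partial_z|^{s_z}\psi\big\|_{L^2_x}^2\lesssim \|H^{s/2}\psi\|_{L^2}^2+\||\partial_z|^{s}\psi\|_{L^2}^2. \]
Standard spectral interpolation gives $\|H^{s/2}\psi\|_{L^2}\le\|\psi\|_{L^2}^{1-s}\|H^{1/2}\psi\|_{L^2}^s$ and $\||\partial_z|^{s}\psi\|_{L^2}\le\|\psi\|_{L^2}^{1-s}\|\partial_z\psi\|_{L^2}^s$. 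Since $\|H^{1/2}\psi\|_{L^2}+\|\partial_z\psi\|_{L^2}\lesssim\|\psi\|_{B^1}$ (cf.~\eqref{eqD}), both summands are dominated by $\|\psi\|_{L^2}^{2(1-s)}\|\psi\|_{B^1}^{2s}$, giving \eqref{ieqnl}.

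The main technical obstacle is the interplay between Minkowski's integral inequality and the vector-valued Strichartz estimate: Minkowski only exchanges two Lebesgue norms when the outer exponent is $\le$ the inner one, so trying to extract the $L^2_z$-valued Strichartz bound from the scalar statement via Plancherel in $z$ produces Minkowski in the wrong direction. One must therefore work in Bochner spaces from the outset; once this is done, the remainder is routine Sobolev, Young, and spectral-interpolation bookkeeping.
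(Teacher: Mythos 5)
Your proposal is correct, but it follows a genuinely different route from the paper. The paper proves the two endpoint cases separately — $\sigma=1$ via the scalar Strichartz bound of Lemma~\ref{St2} plus Minkowski and Gagliardo--Nirenberg in $z$ (giving $\|\psi\|_{L^2}^{3/4}\|\partial_z\psi\|_{L^2}^{1/4}$), and $\sigma=4$ via a Gagliardo--Nirenberg splitting in $y$, the equivalence \eqref{eqH} at order $1$, and several mixed-norm Strichartz applications (giving $\|H^{1/2}\psi\|_{L^2}^{3/5}\|\partial_z\psi\|_{L^2}^{2/5}$) — and then interpolates in $\sigma$ as in \eqref{ieqnl-4}. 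You instead give a single unified argument for all $1\le\sigma\le 4$: Sobolev in $z$ at order $s_z$, fractional Hermite--Sobolev in $y$ at order $s_y$ via \eqref{eqH}, one application of Strichartz with the admissible pair $(p_0,q_0)=(\tfrac{2(\sigma+1)}{\sigma},2\sigma+2)$, and a spectral Young/Hölder interpolation to reach $\|\psi\|_{L^2}^{1-s}\|\psi\|_{B^1}^{s}$ with $s=\tfrac{3\sigma-2}{2(\sigma+1)}$; I checked the exponent bookkeeping ($s_y\ge0$ iff $\sigma\ge1$, $s\le1$ iff $\sigma\le4$) and the final exponents match \eqref{ieqnl}. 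What your route buys is a one-pass proof that makes the scaling structure transparent and needs no endpoint-plus-interpolation step; its cost is the fractional-order use of \eqref{eqH} and the vector-valued ($L^2_z$-valued) versions of the Sobolev and Strichartz estimates. On that last point, your concern about Minkowski is actually unfounded for the norm ordering you chose: since $L^2_z$ sits innermost with the smallest exponent, the exchanges $\|\cdot\|_{L^{2\sigma+2}_yL^2_z}\le\|\cdot\|_{L^2_zL^{2\sigma+2}_y}$ and $\|\cdot\|_{L^{2\sigma+2}_\theta L^2_z}\le\|\cdot\|_{L^2_zL^{2\sigma+2}_\theta}$ both go in the allowed direction, so you can run the whole argument with the scalar \eqref{eqH} and scalar Lemma~\ref{St2} pointwise in $z$; the Bochner-space machinery is valid (as $L^2_z$ is Hilbert and the Mehler kernel is scalar) but not needed. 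One practical remark: the paper's endpoint estimates \eqref{ieqnl-0}--\eqref{ieqnl-3} are reused later (e.g.\ in the exterior-region interpolation in Section~\ref{sharpt} and in the proof of Lemma~\ref{waveo}), which your unified proof does not produce as byproducts, though this does not affect the lemma itself.
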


\begin{proof}
	First, we prove \eqref{ieqnl} for $\sigma=1$. From Lemma \ref{St2}, Minkowski's inequality, and Gagliardo-Nirenberg's inequality in $z$, we obtain
	\begin{equation}\label{ieqnl-0}
		\begin{split}
			\|V(\theta)\psi\|_{L_{\theta,x}^{4}([0,\f{\pi}{2}]\times\R^3)} \lesssim  \|\psi\|_{L_z^4L_y^2} \lesssim  \|\psi\|_{L_y^2L_z^4}\lesssim \|\psi\|_{L^2}^\f{3}{4} \|\partial_z\psi\|_{L^2}^\f{1}{4}.
		\end{split}
	\end{equation}
	Next, we prove \eqref{ieqnl} holds for $\sigma=4$. From Gagliardo-Nirenberg's and H\"{o}lder's inequalities, we have
	\begin{equation}\label{ieqnl-1}
		\begin{split}
			\|V(\theta)\psi\|_{L_{\theta,x}^{10}([0,\f{\pi}{2}]\times\R^3)}&\lesssim \Big\| \|\nabla_yV(\theta)\psi\|_{L_y^3}^{\f{1}{5}} \|V(\theta)\psi\|_{L_y^6}^\f{4}{5} \Big\|_{L_{\theta,z}^{10}}\\
			&\lesssim  \|\nabla_yV(\theta)\psi\|_{L_\theta^6L_z^2L_y^3}^{\f{1}{5}} \|V(\theta)\psi\|_{L_\theta^{12}L_z^{\infty}L_y^6}^\f{4}{5}. \\
		\end{split}
	\end{equation}
	From \eqref{eqH}, Minkowski's inequality, and Lemma \ref{St2}, we have
	\begin{equation}\label{ieqnl-2}
		\|\nabla_yV(\theta)\psi\|_{L_\theta^6L_z^2L_y^3} \lesssim  \|H^\h V(\theta)\psi\|_{L_z^2L_\theta^6L_y^3} \lesssim \|H^\h \psi\|_{L_x^2}.
	\end{equation}
	From Minkowski's and Gagliardo-Nirenberg's inequality, \eqref{eqH}, and Lemma \ref{St2}, we have
	\begin{equation}\label{ieqnl-3}
		\begin{split}
			\|V(\theta)\psi\|_{L_\theta^{12}L_z^{\infty}L_y^6} &\lesssim 	\|V(\theta)\psi\|_{L_\theta^{12}L_y^6L_z^{\infty}}\\
			&\lesssim\|\partial_z V(\theta)\psi\|_{L_\theta^{3}L_y^6L_z^2}^\f{1}{4} \|V(\theta)\psi\|_{L_\theta^{\infty}L_{y,z}^6}^\f{3}{4} \\
			&\lesssim\|\partial_z V(\theta)\psi\|_{L_z^2L_\theta^{3}L_y^6}^\f{1}{4} \|\nabla_y V(\theta)\psi\|_{L_\theta^{\infty}L_x^2}^\f{1}{2}   \|\partial_z V(\theta)\psi\|_{L_\theta^{\infty}L_x^2}^\f{1}{4}\\	
			&\lesssim \|H^\h \psi\|_{L^2}^{\h} \|\partial_z\psi\|_{L^2}^\h.
		\end{split}
	\end{equation}
	By combining \eqref{ieqnl-1} with \eqref{ieqnl-3}, we have 
	\begin{equation*}
		\|V(\theta)\psi\|_{L_{\theta,x}^{10}} \lesssim \| H^{\h} \psi\|_{L^2}^{\f{3}{5}} \|\partial_z \psi\|_{L^2}^{\f{2}{5}}.
	\end{equation*}
	When $1<\sigma<4$, it is sufficient to interpolate the cases $\sigma=1$ and $\sigma=4$:
	\begin{equation}\label{ieqnl-4}
		\|V(\theta)\psi\|_{L_{\theta,x}^{2\s+2}([0,\f{\pi}{2}]\times\R^3)} \lesssim	\|V(\theta)\psi\|_{L_{\theta,x}^{4}}^{\f{2(4-\s)}{3(\s+1)}}
		\|V(\theta)\psi\|_{L_{\theta,x}^{10}}^{\f{5(\s-1)}{3(\s+1)}}\lesssim  \|\psi\|_{L^2}^{\f{4-\s}{2(\s+1)}}\|\psi\|_{B^1}^{\f{3\s-2}{2(\s+1)}}.
	\end{equation}
\end{proof}

\begin{remark}\label{rembdnl}
	When $0<\s\le1$, from \eqref{ieqnl-0} and H\"{o}lder's inequality in $\theta$, we obtain 
	\[\|V(\theta)\psi\|_{L_{\theta,x}^{2\s+2}([0,\f{\pi}{2}]\times\R^3)} \lesssim \|\psi\|_{L^2}^{\f{3}{4}}\|\partial_z\psi\|_{L^2}^\f{1}{4}.
\]
\end{remark}

\subsection{Conservation law and the equivalent model}

\begin{lemma}\label{conke}
	Let $\h\le\sigma\le4$. Then, any solution to (\ref{NLS}) in $B^1$ conserves 
	\begin{equation}\label{ke}
		K[\phi]:=\h \langle H\phi, \phi\rangle =  \h\| \nabla_y \phi \|_{L^2}^2+\f{1}{2}\| y \phi\|_{L^2}^2 .
	\end{equation}
\end{lemma}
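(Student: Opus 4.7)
The plan is to differentiate $K[\phi(t)]$ in time, use the equation, and exploit the integral representation \eqref{Fav} to turn the problematic nonlinear term into a total derivative in the averaging parameter $\theta$.

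At the formal level, since $H$ is self-adjoint and commutes with $\partial_z^2$, differentiation gives
\[
\frac{d}{dt}K[\phi]=\mathrm{Re}\,\langle H\phi,\partial_t\phi\rangle
=-\mathrm{Im}\,\langle H\phi,\partial_z^2\phi\rangle+\lambda\,\mathrm{Im}\,\langle H\phi,\Fav(\phi)\rangle.
\]
The first term vanishes because $[H,\partial_z^2]=0$ and both operators are self-adjoint, so $\langle H\phi,\partial_z^2\phi\rangle$ is real. Thus the whole content of the lemma is that the second term is zero, and this is where the representation \eqref{Fav} enters. Writing $\psi_\theta:=V(\theta)\phi=e^{-i\theta H}\phi$ and using that $H$ commutes with $V(\theta)$,
\[
\langle H\phi,\Fav(\phi)\rangle=\frac{2}{\pi}\int_0^{\pi/2}\langle H\psi_\theta,|\psi_\theta|^{2\sigma}\psi_\theta\rangle\,d\theta.
\]

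The key observation is a $\theta$-derivative identity. Setting $\Phi_\sigma(f):=\frac{1}{2\sigma+2}\int_{\R^3}|f|^{2\sigma+2}dx$ and using $\partial_\theta\psi_\theta=-iH\psi_\theta$, one computes
\[
\partial_\theta\Phi_\sigma(\psi_\theta)=\mathrm{Re}\,\langle|\psi_\theta|^{2\sigma}\psi_\theta,-iH\psi_\theta\rangle
=-\,\mathrm{Im}\,\langle H\psi_\theta,|\psi_\theta|^{2\sigma}\psi_\theta\rangle.
\]
Integrating in $\theta$ from $0$ to $\pi/2$ yields
\[
\mathrm{Im}\,\langle H\phi,\Fav(\phi)\rangle=\frac{2}{\pi}\bigl(\Phi_\sigma(\psi_0)-\Phi_\sigma(\psi_{\pi/2})\bigr).
\]
Finally, I would invoke the parity of the Hermite eigenfunctions together with the fact that the eigenvalues of $H$ on $\Pi_n L^2(\R^2)$ are $2(n+1)$: this gives $e^{-i(\pi/2)H}\Pi_n=(-1)^{n+1}\Pi_n$, so $(V(\pi/2)\phi)(y,z)=-\phi(-y,z)$; consequently $|V(\pi/2)\phi|=|\phi(-\cdot,\cdot)|$ pointwise and $\Phi_\sigma(\psi_{\pi/2})=\Phi_\sigma(\psi_0)$ by a trivial change of variables. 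This closes the identity $\frac{d}{dt}K[\phi]=0$.

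The technical obstacle, and the reason this improves on the existing literature where $\sigma\in\N$, is that for generic $\sigma\ge 1/2$ the nonlinearity $|\psi|^{2\sigma}\psi$ is only $C^1$ (not analytic), so the $\theta$-differentiation of $\Phi_\sigma(\psi_\theta)$ and the pointwise computations must be justified; moreover one must make sense of $\langle H\phi,\Fav(\phi)\rangle$ when $\phi$ is merely in $B^1$. The standard remedy is to regularize: approximate the $B^1$ datum by data in a smoother space (for instance $B^2\cap D(H)$, or a Galerkin-type cutoff using finitely many Hermite modes so that the nonlinearity is bounded and smooth), apply the formal argument above to the approximating solutions where every manipulation is licit, and then pass to the limit using the local well-posedness and stability in $B^1$ provided by \cite{K} together with the integrability of $|\psi_\theta|^{2\sigma+2}$ supplied by Lemma \ref{bdnl}. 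The parity/eigenvalue computation giving $\Phi_\sigma(\psi_{\pi/2})=\Phi_\sigma(\psi_0)$ is stable under this approximation, so the conservation persists in the limit.
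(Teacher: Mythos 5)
Your proposal is correct and follows essentially the same route as the paper: differentiate $K$, use $HV(\theta)=i\partial_\theta V(\theta)$ to write the nonlinear contribution as a total $\theta$-derivative of $\int|V(\theta)\phi|^{2\sigma+2}dx$, kill the boundary terms via the parity/eigenvalue identity $V(\pi/2)\phi(y,z)=-\phi(-y,z)$, and extend from regular ($B^2$) solutions to $B^1$ by the local well-posedness theory. The paper's proof is exactly this computation, so no further comparison is needed.
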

 When $\sigma\in \N$, this lemma is proved in \cite[Lemma 2.8]{K}. We extend the conservation law of $K$ to all positive $\sigma$.

\begin{proof}
	Let $\phi(t)$ be a solution to \eqref{NLS} in $B^2$, which is a Banach algebra. Then, a simple calculation yields
	\begin{equation*}
		\begin{split}
			\f{d}{dt}K[\phi(t)]&=\text{Re} \langle H\phi(t), \partial_t\phi(t)\rangle = -\text{Im} \langle H\phi(t), \lmd\Fav(\phi(t))\rangle\\
			&=-\lmd\f{2}{\pi}\text{Im} \int_{0}^{\f{\pi}{2}} \int_{\R^3} |V(\theta)\phi(t)|^{2\s} \big(HV(\theta)\phi(t)\big) \overline{V(\theta)\phi(t) }dx d\theta \\
			&=-\lmd\f{2}{\pi}\text{Im} \int_{0}^{\f{\pi}{2}} \int_{\R^3} |V(\theta)\phi(t)|^{2\s} \big(i\partial_\theta V(\theta)\phi(t)\big) \overline{V(\theta)\phi(t) } dx d\theta \\
			&= -\lmd\f{2}{\pi}\int_{0}^{\f{\pi}{2}} \int_{\R^3} |V(\theta)\phi(t)|^{2\s} \text{Re}\Big[\big(\partial_\theta V(\theta)\phi(t)\big) \overline{V(\theta)\phi(t) }\Big]  dxd\theta\\
			&=-\f{\lmd}{\pi} \int_{\R^3}  \int_{0}^{\f{\pi}{2}}|V(\theta)\phi(t)|^{2\s} \partial_\theta |V(\theta)\phi(t)|^2  d\theta dx\\
			&=-\f{\lmd}{\pi(\s+1)}  \int_{\R^3}\int_{0}^{\f{\pi}{2}} 
			\partial_\theta|V(\theta)\phi(t)|^{2\s+2}
			d\theta dx.
		\end{split}
	\end{equation*}
	Using the Hermite expansion, we obtain 	
	\begin{equation}\label{sym}
		\begin{split}
			V\Big(\f{\pi}{2}\Big)\phi(y,z)&=\sum_{n=0}^\infty e^{-2i(n+1)\times \f{\pi}{2}}\Pi_n\phi(y,z) = \sum_{n=0}^\infty (-1)^{(n+1)} \Pi_n\phi(y,z) = -\phi(-y,z),\\
		\end{split}
	\end{equation}
	where we use $\Pi_n f(y)=(-1)^n\Pi_nf(-y)$ (see cf.~\cite{CR}). Therefore, we obtain
	\begin{equation*}
		\int_{\R^3}\int_{0}^{\f{\pi}{2}} 
		\partial_\theta|V(\theta)\phi(t)|^{2\s+2}d\theta dx=\int_{\R^3} \Big[ |V(\theta)\phi(t)|^{2\s+2} \Big]_{\theta=0}^{\theta=\f{\pi}{2}} dx=0,
	\end{equation*}
	and
	\begin{equation*}
		\begin{split}
			\f{d}{dt}K[\phi(t)]&= 0.
		\end{split}
	\end{equation*}
	When $\h\le\s\le 4$, from Proposition \ref{wp}, \eqref{NLS} is locally well-posed in $B^1$. Therefore, we can extend the conservation laws of $K$ to solutions in $B^1$.
\end{proof}

\begin{remark}\label{equivNLS2}
	Suppose that $\phi(t)$ is a solution to \eqref{NLS} in $B^1$. Then, $\psi(t):=V(t)\phi(t)$ solves \eqref{NLS2}. A simple calculation shows
	\begin{equation*}
		i\partial_t\psi(t)=H\psi(t)-\partial_z^2 \psi(t) +\lmd V(t)\Fav(\phi(t)).
	\end{equation*}
	and
	\begin{equation*}
		\begin{split}
			V(t)\Fav(\phi(t))=\f{2}{\pi} \int_{-t}^{\f{\pi}{2}-t} V(-\theta)\Big(|V(\theta)\psi(t)|^{2\s}V(\theta)\psi(t)\Big) d\theta.
		\end{split}
	\end{equation*}
	By using \eqref{sym}, we have
	\begin{equation*}
		\begin{split}
			&\int_{-t}^0 V(-\theta)\Big(|V(\theta)\psi(t)|^{2\s}V(\theta)\psi(t)\Big) d\theta \\
			&= \int_{\f{\pi}{2}-t}^{\f{\pi}{2}} V(-\theta+\f{\pi}{2})\Big(|V(\theta-\f{\pi}{2})\psi(t)|^{2\s}V(\theta-\f{\pi}{2})\psi(t)\Big) d\theta \\
			&= -\int_{\f{\pi}{2}-t}^{\f{\pi}{2}} V(-\theta+\f{\pi}{2})\Big(|V(\theta)\psi(t, -y,z)|^{2\s}V(\theta)\psi(t, -y,z)\Big) d\theta \\
			&= \int_{\f{\pi}{2}-t}^{\f{\pi}{2}} V(-\theta)\Big(|V(\theta)\psi(t, y,z)|^{2\s}V(\theta)\psi(t, y,z)\Big) d\theta.
		\end{split}
	\end{equation*}
	Hence, it holds $V(t)\Fav(\phi(t))=\Fav(\psi(t))$.
	
\end{remark}

\section{Variational problem}\label{vari}

In this section, we assume $\lmd=-1$. We define the action function 
\begin{equation}
	S[\psi]= \h\|\nabla_x \psi\|_{L^2}^2+\h\|y\psi\|_{L^2}^2 + \h \|\vphi\|_{L^2}^2-\f{1}{\pi(\sigma+1)}\|V(\theta)\psi\|_{L_{\theta,x}^{2\sigma+2}([0,\f{\pi}{2}]\times \R^3)}^{2\sigma+2}
\end{equation}
and the Nehari functional
\begin{equation}
	I[\psi]= \|\nabla_x \psi\|_{L^2}^2+\|y\psi\|_{L^2}^2+  \|\vphi\|_{L^2}^2-\f{2}{\pi}\|V(\theta)\psi\|_{L_{\theta,x}^{2\sigma+2}([0, \f{\pi}{2}]\times \R^3)}^{2\sigma+2}
\end{equation}
on $B^1$. We prove the existence of a ground-state solution $Q$ to \eqref{ep2} and some lemmas that are required for classifying the solutions to \eqref{NLS} below the ground state. 
We define the scaling function $\psi_{\mu}^{a,b}$ of $\psi$ as
\begin{equation}\label{scaleab}
	\psi_{\mu}^{a,b}(x):=e^{a\mu} \psi(y, e^{b\mu}z),  \quad \mu\in \R.
\end{equation}
 The pair $(a,b)$ satisfies the following conditions:
\begin{equation}\label{asab}
	a>0, \quad  b\ge0, \quad 2a-b\ge0, \quad \s a-b>0, \quad (a,b) \neq (0,0).
\end{equation}
From the first and third conditions in \eqref{asab}, we have $(2\s +2)a-b>0$. Additionally, we often assume that 
\begin{equation}\label{asab2}
	2a-b>0.
\end{equation}
By \eqref{scaleab}, it holds that 
\begin{equation*}
	\begin{split}
		\| \psi_\mu^{a,b}\|_{L^2}^2&= e^{(2a-b)\mu}\|\psi\|_{L^2}^2,\\
		\|\nabla_y\psi_\mu^{a,b}\|_{L^2}^2&= e^{(2a-b)\mu}\|\nabla_y\psi\|_{L^2}^2, \quad 	\| y\psi_\mu^{a,b}\|_{L^2}^2= e^{(2a-b)\mu}\|y\psi\|_{L^2}^2,
	\end{split}
\end{equation*}
\begin{equation*}
	\|\partial_z \psi_\mu^{a,b}\|_{L^2}^2=e^{(2a+b)\mu}\|\partial_z\psi\|_{L^2}^2,
\end{equation*}
\begin{equation*}
	\|V(\theta)\psi_\mu^{a,b}\|_{L_{\theta,x}^{2\s+2}([0,\f{\pi}{2}]\times\R^3)}^{2\s+2} = e^{((2\s+2)a-b)\mu}\|V(\theta)\psi\|_{L_{\theta,x}^{2\s+2}([0,\f{\pi}{2}]\times\R^3)}^{2\s+2}.
\end{equation*}
We define the functional $J^{a,b}$ as
\begin{equation*}
	\begin{split}
		&J^{a,b}[\psi]:= \partial_\mu S[\psi_\mu^{a,b}] |_{\mu=0}\\
		&=(2a-b)\Big(K[\psi]+ M[\psi]\Big)+\f{2a+b}{2}\|\partial_z \psi\|_{L^2}^2-\f{(2\s+2)a-b}{\pi(\s+1)}\|V(\theta)\psi\|_{L_{\theta,x}^{2\s+2}([0,\f{\pi}{2}]\times\R^3)}^{2\s+2}.
	\end{split}
\end{equation*}
In particular, $J^{1.0}=I$ and $J^{1,2}=P$.
Hereafter, we will omit the integral region of the potential energy,
\begin{equation*}
	\|V(\theta)\psi\|_{L_{\theta,x}^{2\s+2}}:=\|V(\theta)\psi\|_{L_{\theta,x}^{2\s+2}([0,\f{\pi}{2}]\times\R^3)}.
\end{equation*}

\begin{lemma}
	Let $0<\s<4$, and let $(a,b)$ satisfy \eqref{asab} and \eqref{asab2}. Let $\{v_k\}_{k=1}^\infty \subset B^1/\{0\}$ be a  sequence such that $\lim_{k\to+\infty}\|v_k\|_{B^1}=0$. Then, for a sufficiently large $k$, $J^{a,b}(v_k)>0$. 
\end{lemma}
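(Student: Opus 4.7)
My plan is to bound $J^{a,b}[\psi]$ from below by an expression of the form $c\|\psi\|_{B^1}^2 - C\|\psi\|_{B^1}^{2\sigma+2}$ for positive constants $c, C$ depending on $a, b, \sigma$. Since $2\sigma+2 > 2$, such a bound is strictly positive whenever $\|\psi\|_{B^1}$ is small, and applying this to $\psi = v_k$ with $\|v_k\|_{B^1} \to 0$ and $v_k \neq 0$ yields the conclusion.

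For the quadratic part, I would first observe that
\begin{equation*}
K[\psi] + M[\psi] = \tfrac{1}{2}\bigl(\|\nabla_y\psi\|_{L^2}^2 + \|y\psi\|_{L^2}^2 + \|\psi\|_{L^2}^2\bigr),
\end{equation*}
so that
\begin{equation*}
(2a-b)(K[\psi]+M[\psi]) + \tfrac{2a+b}{2}\|\partial_z\psi\|_{L^2}^2 = \tfrac{2a-b}{2}\bigl(\|\nabla_y\psi\|_{L^2}^2 + \|y\psi\|_{L^2}^2 + \|\psi\|_{L^2}^2\bigr) + \tfrac{2a+b}{2}\|\partial_z\psi\|_{L^2}^2.
\end{equation*}
Under \eqref{asab} and \eqref{asab2}, both $2a-b$ and $2a+b$ are strictly positive, so this expression is bounded below by a constant multiple of $\|\nabla_x\psi\|_{L^2}^2 + \|y\psi\|_{L^2}^2 + \|\psi\|_{L^2}^2 \simeq \|\psi\|_{B^1}^2$, using the norm equivalence recorded in Section \ref{prel}.

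For the nonlinear part, I would invoke Lemma \ref{bdnl} in the range $1 \leq \sigma < 4$ to obtain
\begin{equation*}
\|V(\theta)\psi\|_{L_{\theta,x}^{2\sigma+2}}^{2\sigma+2} \lesssim \|\psi\|_{L^2}^{4-\sigma}\|\psi\|_{B^1}^{3\sigma-2} \lesssim \|\psi\|_{B^1}^{2\sigma+2},
\end{equation*}
since $(4-\sigma) + (3\sigma-2) = 2\sigma+2$ and $\|\psi\|_{L^2} \lesssim \|\psi\|_{B^1}$. For $0 < \sigma < 1$ the same conclusion follows from Remark \ref{rembdnl}, raising $\|V(\theta)\psi\|_{L_{\theta,x}^{2\sigma+2}} \lesssim \|\psi\|_{L^2}^{3/4}\|\partial_z\psi\|_{L^2}^{1/4}$ to the $(2\sigma+2)$-th power.

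Combining these estimates gives constants $c, C > 0$ (depending on $a, b, \sigma$) with
\begin{equation*}
J^{a,b}[\psi] \geq c\|\psi\|_{B^1}^2 - C\|\psi\|_{B^1}^{2\sigma+2} = \|\psi\|_{B^1}^2\bigl(c - C\|\psi\|_{B^1}^{2\sigma}\bigr),
\end{equation*}
from which the claim follows for $k$ large since $\|v_k\|_{B^1} \to 0$. There is no genuine obstacle in this argument; the only subtlety is that Lemma \ref{bdnl} requires $\sigma \geq 1$, so the full range $\sigma \in (0,4)$ is reached only by splitting off $\sigma \in (0,1)$ and using Remark \ref{rembdnl}.
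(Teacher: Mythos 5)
Your proposal is correct and follows essentially the same route as the paper: bound the quadratic part of $J^{a,b}$ from below by a multiple of $\|v_k\|_{B^1}^2$ using $2a-b>0$, control the potential term by $\|v_k\|_{B^1}^{2\sigma+2}$ via Lemma \ref{bdnl}, and conclude by smallness of $\|v_k\|_{B^1}$. Your explicit appeal to Remark \ref{rembdnl} for $0<\sigma<1$ is a small point of extra care that the paper's proof (which only cites Lemma \ref{bdnl}) leaves implicit.
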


\begin{proof}
	By using Lemma \ref{bdnl}, we obtain the lower bound for $J^{a,b}$:
	\begin{equation*}
		\begin{split}
			J^{a,b}[v_k]&\ge \f{2a-b}{2}\|v_k\|_{B^1}^2-\f{(2\s+2)a-b}{\pi(\s+1)}\|V(\theta) v_k\|_{L_{\theta,x}^{2\s+2}}^{2\s+2} \\
			&\ge \f{2a-b}{2}\|v_k\|_{B^1}^2-C\f{(2\s+2)a-b}{\pi(\s+1)} \|v_k\|_{B^1}^{2\s+2},
		\end{split}
	\end{equation*}
	where $C$ denotes a positive constant. Since $2a-b>0$, we obtain $J^{a,b}[v_k]>0$ for sufficiently large $k$.
\end{proof}

Next, we consider the following minimization problem:
\begin{equation}\label{minp}
	d^{a,b}:=\inf_{\psi \in \cA^{a,b}}S[\psi], \quad \cA^{a,b}:=\{\psi \in B^1 \backslash \{0\}, J^{a,b}[\psi]=0\}.
\end{equation}
Let
\begin{equation}
	U^{a,b}:=\{ \psi\in \cA^{a,b}:S[\psi]=d^{a,b} \}.
\end{equation}

\begin{lemma}\label{Uab}
	Let $0<\s<4$, and let $(a,b)$ satisfy \eqref{asab} and \eqref{asab2}. Then, set $U^{a,b}$ is not empty.
\end{lemma}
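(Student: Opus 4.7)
The plan is to prove $U^{a,b}\ne\emptyset$ by the standard constrained direct method, adapted to the translation invariance in $z$. To begin, I would take a minimizing sequence $\{\psi_n\}\subset \cA^{a,b}$ with $S[\psi_n]\to d^{a,b}$ and use the constraint $J^{a,b}[\psi_n]=0$ to eliminate the nonlinear term from $S$. A short computation gives
\[
S[\psi_n]=\f{2\sigma a}{(2\sigma+2)a-b}\bigl(K[\psi_n]+M[\psi_n]\bigr)+\f{\sigma a-b}{(2\sigma+2)a-b}\|\partial_z\psi_n\|_{L^2}^2,
\]
and under \eqref{asab} both coefficients are strictly positive, so $\{\psi_n\}$ is uniformly bounded in $B^1$. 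The preceding lemma, which relies on \eqref{asab2}, yields $J^{a,b}[v]>0$ whenever $\|v\|_{B^1}$ is small enough; along the minimizing sequence this forces $\|\psi_n\|_{B^1}$ to stay bounded away from zero, so in particular $d^{a,b}>0$.

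Next, I would exploit the $z$-translation invariance of $S$ and $J^{a,b}$ (which holds because $V(\theta)$ acts only on $y$) to replace $\psi_n$ by $\psi_n(\cdot,\cdot-z_n)$ for well-chosen shifts $\{z_n\}\subset\R$ and pass to a weakly convergent subsequence $\psi_n\rightharpoonup\psi^\star$ in $B^1$. A Lions-type concentration-compactness argument, combined with the lower bound on $\|V(\theta)\psi_n\|^{2\sigma+2}_{L^{2\sigma+2}_{\theta,x}}$ read off the constraint via Lemma~\ref{bdnl}, should rule out vanishing and guarantee $\psi^\star\not\equiv 0$. To identify $\psi^\star$ as a minimizer I would establish a Brezis--Lieb decomposition
\[
\|V(\theta)\psi_n\|^{2\sigma+2}_{L^{2\sigma+2}_{\theta,x}}=\|V(\theta)\psi^\star\|^{2\sigma+2}_{L^{2\sigma+2}_{\theta,x}}+\|V(\theta)(\psi_n-\psi^\star)\|^{2\sigma+2}_{L^{2\sigma+2}_{\theta,x}}+o(1),
\]
using the compactness of $B^1_y\hookrightarrow L^p_y$ coming from the harmonic oscillator together with a local Rellich argument in $z$. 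Combined with the weak lower semi-continuity of $K+M+\h\|\partial_z\cdot\|_{L^2}^2$, this yields $J^{a,b}[\psi^\star]\le 0$. If strict inequality held, rescaling $(\psi^\star)_\mu^{a,b}$ with a suitable $\mu<0$ would bring $\psi^\star$ back onto $\cA^{a,b}$ while strictly decreasing $S$ (via the displayed formula), contradicting the definition of $d^{a,b}$. Hence $\psi^\star\in \cA^{a,b}$ and $S[\psi^\star]=d^{a,b}$, so $\psi^\star\in U^{a,b}$.

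The main obstacle is the non-vanishing step: with no confinement in $z$ and translation invariance in that direction, the weak limit of a generic minimizing sequence may be zero. A genuine concentration-compactness or profile decomposition along $z$-translations is therefore essential, and one has to check that the subcritical Gagliardo--Nirenberg--Strichartz bound of Lemma~\ref{bdnl} indeed prevents the $L^{2\sigma+2}_{\theta,x}$ mass from escaping to infinity in $z$. A related technical point is passing to the limit inside $\|V(\theta)\cdot\|^{2\sigma+2}_{L^{2\sigma+2}_{\theta,x}}$ under weak $B^1$-convergence: since $V(\theta)$ mixes Hermite modes and there is an additional $\theta$-integration, a naive pointwise-a.e. argument does not suffice, so the Brezis--Lieb step must be coupled with the compact action of $H$ in the $y$-variable and a Rellich extraction in $z$.
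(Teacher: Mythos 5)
Your outline matches the paper's strategy up to one decisive point, and at that point there is a real gap: the claim that the Brezis--Lieb splitting together with weak lower semicontinuity of $K+M+\h\|\partial_z\cdot\|_{L^2}^2$ ``yields $J^{a,b}[\psi^\star]\le 0$.'' It does not. Writing $J^{a,b}=Q-N$ with $Q$ the quadratic part and $N$ the potential part, weak convergence gives the exact splitting $Q[\tilde\psi_n]=Q[\psi^\star]+Q[\tilde\psi_n-\psi^\star]+o(1)$ and Brezis--Lieb gives $N[\tilde\psi_n]=N[\psi^\star]+N[\tilde\psi_n-\psi^\star]+o(1)$, hence only
\begin{equation*}
J^{a,b}[\psi^\star]=-\lim_{n\to\infty}J^{a,b}[\tilde\psi_n-\psi^\star],
\end{equation*}
whose sign is undetermined: lower semicontinuity controls $Q[\psi^\star]$ and $N[\psi^\star]$ from \emph{the same side}, so no inequality for the difference $J^{a,b}$ follows. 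The possibility $J^{a,b}[\psi^\star]>0$ (mass escaping in $z$ carrying more potential energy than quadratic energy) is exactly the hard case, and your proposal never rules it out; your rescaling argument only disposes of $J^{a,b}[\psi^\star]<0$, which is the easy half. The paper closes this case by a remainder argument: if $J^{a,b}[\psi^\star]>0$, the two splittings force $J^{a,b}[\tilde\psi_n-\psi^\star]<0$ for large $n$; then the ``scale onto the constraint'' step (your own $\mu<0$ rescaling, the paper's Step 2) applied to the \emph{remainders} gives $d^{a,b}<\tB^{a,b}[\tilde\psi_n-\psi^\star]$, while the quadratic splitting and $\psi^\star\neq 0$ give $\tB^{a,b}[\tilde\psi_n-\psi^\star]\to d^{a,b}-\tB^{a,b}[\psi^\star]<d^{a,b}$, a contradiction. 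You need this (or an equivalent device, e.g.\ minimizing over $\{J^{a,b}\le 0\}$) to conclude $J^{a,b}[\psi^\star]=0$; as written, the step is missing.

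Two further remarks. First, once $J^{a,b}[\psi^\star]<0$ is excluded, your contradiction via the $(a,b)$-rescaling with $\mu<0$ is fine, but note it still needs $\tB^{a,b}[\psi^\star]\le d^{a,b}$, i.e.\ weak lower semicontinuity of the constraint-reduced quadratic functional along $\tilde\psi_n$ -- state this explicitly. Second, your flagged technical point about passing to the limit in $\|V(\theta)\cdot\|_{L^{2\s+2}_{\theta,x}}^{2\s+2}$ is genuine: compactness in $y$ from the oscillator and Rellich in $z$ for fixed $\theta$ must be upgraded to convergence jointly in $(\theta,x)$; the paper does this by proving a uniform $|\partial_\theta|^{\h}$ bound on $\vphi(\theta)V(\theta)\tilde\psi_n$ and then applying Rellich--Kondrachov locally in $(\theta,x)$, which is the cleanest way to make your Brezis--Lieb step rigorous.
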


\begin{proof}
First, we introduce the functional:
	\begin{equation*}
		\begin{split}
			\tB^{a,b}[\psi]&:=S[\psi]-\f{1}{(2\s+2)a-b}J^{a,b}[\psi]\\
			&=\al(\|\psi\|_{L^2}^2 + \|\nabla_y\psi\|_{L^2}^2 + \|y\psi\|_{L^2}^2) +\beta \|\partial_z \psi\|_{L^2}^2,
		\end{split}
	\end{equation*}
	where, 
	\begin{equation*}
		\al=\f{1}{2}\Big(1-\f{2a-b}{(2\s+2)a-b} \Big)>0, \qquad 	\beta=\f{1}{2}\Big(1-\f{2a+b}{(2\s+2)a-b} \Big)>0.
	\end{equation*}
	Thus, $\tB^{a,b}[\psi] \simeq_{a,b} \|\psi\|_{B^1}^2$ for all $\psi \in B^1$. Since $J^{a,b}[\psi]=0$ for any $\psi\in \cA^{a,b}$, $\tB^{a,b}[\psi]=S[\psi]$ holds. Thus, 
	\begin{equation}\label{eqinf}
		d^{a,b}=\inf_{\psi\in\cA^{a,b}}\tB^{a,b}[\psi].
	\end{equation}
	\underline{\textbf{Step 1.} $d^{a,b}>0$.}
	We assume that $\psi\in \cA^{a,b}$. Because $J^{a,b}[\psi]=0$ and Lemma \ref{bdnl}, we obtain
	\begin{equation*}
		0< \|\psi\|_{B^1}^2 \simeq_{a,b} \|V(\theta)\psi\|_{L_{\theta,x}^{2\s+2}}^{2\s+2} \lesssim \|\psi\|_{B^1}^{2\s+2}.
	\end{equation*} 
	Therefore, $1\lesssim_{a,b} \|\psi\|_{B^1}^{2\s} \simeq_{a,b} (\tB^{a,b}[\psi])^{\s}$, implying $d^{a,b}>0$.\\
	
	\noindent
	\underline{\textbf{Step 2.} If $\psi\in B^1$ satisfies  $J^{a,b}[\psi]<0$, then $d^{a,b}<\tB^{a,b}[\psi]$.}
	Indeed, there exists $\nu\in (0,1)$ such that $J^{a,b}[\nu\psi]=0$. Thus, from the definition of $d^{a,b}$, we have
	\begin{equation*}
		d^{a,b}\le \tB^{a,b}[\nu\psi] =\nu^2 \tB^{a,b}[\psi] < \tB^{a,b}[\psi]. 
	\end{equation*}
	
	\noindent
	\underline{\textbf{Step 3.} Construction of a candidate for the minimizer. } Let $\{\psi_n\}_{n=1}^{\infty} \subset \cA^{a,b} $ be a minimizing sequence of $d^{a,b}$. That is, it holds that $\tB^{a,b}[\psi_n]\to d^{a,b}$ as $n\to \infty$.  Then, the sequence $\{\psi_n\}_{n=1}^{\infty}$ is bounded in $B^1$. Furthermore, in Step 1, we obtain:
	\begin{equation*}
		\|V(\theta)\psi_n\|_{L_{\theta,x}^{2\s+2}}^{2\s+2} \gtrsim_{a,b} \|\psi_n\|_{B^1}^2 \gtrsim_{a,b} \tB^{a,b}[\psi_n] \to d^{a,b}>0.
	\end{equation*}
	Hence, we have
	\[\limsup_{n\to \infty} \|V(\theta)\psi_n\|_{L_{\theta,x}^{2\s+2}}\gtrsim_{a,b}1.\] 
	In addition, by Lemma \ref{bdnl}, we obtain
	\begin{align*}
		\|V(\theta)\psi_n\|_{L_{\theta,x}^{2\s+2}} \lesssim  \|\psi_n\|_{L^2}^{\f{4-\s}{2(\s+1)}}\|\psi_n\|_{B^1}^{\f{3\s-2}{2(\s+1)}} \qquad \text{for} \quad 1\le\s<4,\\
		\|V(\theta)\psi_n\|_{L_{\theta,x}^{2\s+2}} \lesssim  \|\psi_n\|_{L^2}^{\f{3}{4}}\|\psi_n\|_{B^1}^{\f{1}{4}} \qquad \text{for} \quad 0<\s<1.
	\end{align*}
	By combining these estimates with the upper bound for $\|\psi_n\|_{B^1}$, we have
	\begin{equation*}
		\limsup_{n\to \infty}\|\psi_n\|_{L^2}\ge \exists C_{a,b} >0.
	\end{equation*}
	By \cite[Lemma 3.4]{Bell} (see also \cite{Ohta}), up to a subsequence, there exist $\psi^*\in B^1 \backslash \{0\}$ and $\{z_n\}_{n=1}^\infty \subset \R$ such that
	\begin{equation}\label{wc}
		\tilde{\psi}_n:=\psi_n(y,z-z_n) \rightharpoonup \psi^* \quad \text{weakly in} \quad B^1.
	\end{equation}
	
	\noindent
	\underline{\textbf{Step 4.} $J^{a,b}[\psi^*]=0$.} If $J^{a,b}[\psi^*]<0$, Step 2 results in $d^{a,b}<\tB^{a,b}[\psi^*]$. However, it holds that
	\begin{equation*}
		\tB^{a,b}[\psi^*]\le \liminf_{n\to\infty} \tB^{a,b}[\tilde{\psi}_n]=\liminf_{n\to\infty} \tB^{a,b}[\psi_n] =d^{a,b}.
	\end{equation*}
	This is a contradiction. 
	We assume $J^{a,b}[\psi^*]>0$. If we check 
	\begin{equation}\label{Bre1}
		\sup_{n\in \N}\|V(\theta)\tilde{\psi}_n\|_{L_{\theta,x}^{2\s+2}}<\infty
	\end{equation}
	and
	\begin{equation}\label{Bre2}
		V(\theta)\tilde{\psi}_n(x) \to V(\theta)\psi^*(x) \quad \text{a.e. in } (\theta,x)\in [0,\f{\pi}{2}]\times \R^3,
	\end{equation}
	Brezis-Lieb's lemma provides
	\begin{equation}\label{BL}
		\lim_{n\to\infty} (\|V(\theta)\tilde{\psi}_n\|_{L^{2\s+2}}^{2\s+2}-\|V(\theta)(\tilde{\psi}_n-\psi^*)\|_{L^{2\s+2}}^{2\s+2})= \|V(\theta)\psi^*\|_{L^{2\s+2}}^{2\s+2}.
	\end{equation}
	From Lemma \ref{bdnl} and the boundedness of $\{\tilde{\psi}_n\}$ in $B^1$, \eqref{Bre1} is obvious. We now prove \eqref{Bre2}. Let $\vphi\in C_0^\infty(\R)$ be a bump function, such that $\text{supp}\vphi\subset [-\pi, \pi]$ and $\vphi\equiv 1$ on $[0,\f{\pi}{2}]$.
	Then, we calculate
	\begin{equation*}
		\begin{split}
			&\||\partial_\theta|^\h \vphi (\theta) V(\theta)\tilde{\psi}_n\|_{L_{\theta,x}^2(\R^{1+3})}= \int_{\R^3} \int_\R |\om| \big|\cF_{\theta\to \om} \big[\vphi (\theta) V(\theta)\tilde{\psi}_n \big] \big|^2 d\om dx \\
			&\le \Big|\int_{\R^3} \int_{-\infty}^0 \om |\cF_{\theta\to \om} \big[\vphi (\theta) V(\theta)\tilde{\psi}_n \big]|^2 d\om dx \Big| +  \Big|\int_{\R^3} \int_0^{\infty} \om |\cF_{\theta\to \om} \big[\vphi (\theta) V(\theta)\tilde{\psi}_n \big]|^2 d\om dx \Big|.
		\end{split}
	\end{equation*}
	For the first term on the second line, we have
	\begin{equation*}
		\begin{split}
			&\quad \Big|\int_{\R^3} \int_{-\infty}^0 \om \big|\cF_{\theta\to \om} \big[\vphi (\theta) V(\theta)\tilde{\psi}_n \big] \big|^2 d\om dx \Big|\\
			&=\Big| \int_{\R^3} \int_{-\infty}^0 \cF_{\theta\to \om} \big[i\partial_\theta \big(\vphi (\theta) V(\theta)\tilde{\psi}_n \big) \big] \overline{ \cF_{\theta\to \om} \big[ \vphi (\theta) V(\theta)\tilde{\psi}_n \big]} d\om dx \Big|\\
			&=\Big| \int_{\R^3} \int_{-\infty}^0 \cF_{\theta\to \om} \big[ i\vphi' (\theta) V(\theta)\tilde{\psi}_n  +  \vphi (\theta) V(\theta)H\tilde{\psi}_n  \big] \overline{ \cF_{\theta\to \om} \big[ \vphi (\theta) V(\theta)\tilde{\psi}_n \big]} d\om dx \Big|\\
			&\lesssim  \|\vphi' V(\theta)\tilde{\psi}_n\|_{L_{\theta,x}^2(\R^{1+3})} \|\vphi V(\theta)\tilde{\psi}_n\|_{L_{\theta,x}^2(\R^{1+3})} + \|\vphi V(\theta)H^{\h} \tilde{\psi}_n\|_{L_{\theta,x}^2(\R^{1+3})}^2\\
			&\lesssim \|H^\h \tilde{\psi}_n\|_{L^2}^2.
		\end{split}
	\end{equation*}
	Similarly, we also have
	\begin{equation*}
		\Big|\int_{\R^3} \int_{0}^\infty \om \big|\cF_{\theta\to \om} \big[\vphi (\theta) V(\theta)\tilde{\psi}_n \big] \big|^2 d\om dx \Big| 
		\lesssim \|H^\h \tilde{\psi}_n\|_{L^2}^2.
	\end{equation*}
	Thus, we obtain
	\begin{equation*}
		\||\partial_\theta|^{\h}\vphi V(\theta)\tilde{\psi}_n\|_{L_{\theta,x}^{2}(\R^{1+3})} \lesssim \|\psi_n\|_{B^1}.
	\end{equation*}
	Because $\{\psi_n\}_{n\in \N}$ is bounded in $B^1$, according to Rellich–Kondrachov's theorem and the diagonal argument, up to a subsequence,  $\{\vphi V(\theta)\tilde{\psi}_n\}_{n\in \N}$ converges to some $\Psi$ in the $L_{\text{loc}}^{2}([-\pi,\pi]\times \R^3)$ topology, and almost everywhere. By combining this with \eqref{wc}, it holds
	\begin{equation}
		V(\theta)\tilde{\psi}_n \to \Psi(\theta, x)=V(\theta)\psi^* \quad \text{a.e. \quad in\quad} [0,\f{\pi}{2}]\times \R^3 \text{\quad as\quad } n\to \infty.  
	\end{equation}
	Thus, we obtain \eqref{BL}. Moreover, from \eqref{wc}, we obtain
	\begin{align*}
		&\lim_{n\to \infty} K[\tilde{\psi}_n]-K[\tilde{\psi}_n-\psi^*]=K[\psi^*] \\
		&\lim_{n\to \infty} M[\tilde{\psi}_n]-M[\tilde{\psi}_n-\psi^*]=M[\psi^*] \\	
		&\lim_{n\to \infty} \|\partial_z\tilde{\psi}_n\|_{L^2}^2-\|\partial_z(\tilde{\psi}_n-\psi^*)\|_{L^2}^2=\|\partial_z\psi^*\|_{L^2}^2 .
	\end{align*}
	Therefore, 
	\begin{equation*}
		\lim_{n\to\infty} J^{a,b}[\tilde{\psi}_n-\psi^*]= \lim_{n\to\infty} J^{a,b}[\tilde{\psi}_n]-J^{a,b}[\psi^*]= -J^{a,b}[\psi^*]<0.
	\end{equation*}
	This implies $ J^{a,b}[\tilde{\psi}_n-\psi^*]<0$ for all sufficiently large $n$.
	Combining Step 2 with the preceding argument, we see that
	\begin{equation*}
		d^{a,b}<\lim_{n\to\infty} \tB^{a,b}[\tilde{\psi}_n-\psi^*] = \lim_{n\to\infty} \tB^{a,b}[\tilde{\psi}_n] - \tB^{a,b}[\psi^*] = d^{a,b} - \tB^{a,b}[\psi^*] < d^{a,b}.
	\end{equation*} 
	However, this is contradictory. Thus, $J^{a,b}[\psi^*]=0$.\\
	
	\noindent
	\underline{\textbf{Step 5.} $U^{a,b}$ are not empty.} From Step 4, $\psi^* \in \cA^{a,b}$ holds and this implies $d^{a,b}\le \tB^{a,b}[\psi^*]$. Conversely, from \eqref{wc}, we have
	\begin{equation*}
		\tB^{a,b}[\psi^*] \le \liminf_{n\to\infty} \tB^{a,b}[\tilde{\psi}_n] = d^{a,b}.
	\end{equation*}
	Therefore, $\tB^{a,b}[\psi^*]=d^{a,b}$ and $\psi^*\in U^{a,b}$.
\end{proof}
\quad

\begin{remark}\label{Extr}
	From Lemma \ref{Uab} with $(a,b)=(1,0)$, we obtain the ground state solution $Q\in B^1$. This is an extremizer of the Strichartz estimate
\begin{equation*}
	\|V(\theta) \psi\|_{L_{\theta,x}^{2\s+2}([0,\f{\pi}{2}]\times\R^3)} \lesssim (\|\psi\|_{L^2}^2+\|\nabla_x\psi\|_{L^2}^2 + \||y|\psi\|_{L^2}^2)^\h.
\end{equation*}
Indeed, for any $\psi\in B^1\setminus\{0\}$, it hold that $I[\al_{\psi} \psi]=0$ for 
\[\al_{\psi}:=\Big(\f{\|\psi\|_{L^2}^2+\|\nabla_x \psi\|_{L^2}^2 + \||y|\psi\|_{L^2}^2}{\|V(\theta) \psi\|_{L_{\theta,x}^{2\s+2}([0,\f{\pi}{2}]\times\R^3)}^{2\s+2}}\Big)^{\f{1}{2\s}}\in \R_+,\]
and 
\begin{equation*}
	\begin{split}
		S[\al_{\psi} \psi]&=\f{\s\al_{\psi}^2}{2\s+2}(\|\psi\|_{L^2}^2+\|\nabla_x\psi\|_{L^2}^2 + \||y|\psi\|_{L^2}^2)\\
		&=\f{\s}{2\s+2}\Big(\f{(\|\psi\|_{L^2}^2+\|\nabla_x\psi\|_{L^2}^2 + \||y|\psi\|_{L^2}^2)^\h}{\|V(\theta) \psi\|_{L_{\theta,x}^{2\s+2}([0,\f{\pi}{2}]\times\R^3)}}\Big)^{2+\f{2}{\s}}.
	\end{split}
\end{equation*}
Since $Q$ is the minimizer of \eqref{minp} with $(a,b)=(1,0)$, $\al_Q=1$ and $S[Q]\le S[\al_{\psi}\psi]$ hold. Hence we have 
\begin{equation*}
	 \f{\|V(\theta) Q\|_{L_{\theta,x}^{2\s+2}([0,\f{\pi}{2}]\times\R^3)}}{(\|Q\|_{L^2}^2+\|\nabla_xQ\|_{L^2}^2 + \||y|Q\|_{L^2}^2)^\h}=\sup_{\psi\in B^1\setminus\{0\}}\f{\|V(\theta) \psi\|_{L_{\theta,x}^{2\s+2}([0,\f{\pi}{2}]\times\R^3)}}{(\|\psi\|_{L^2}^2+\|\nabla_x\psi\|_{L^2}^2 + \||y|\psi\|_{L^2}^2)^\h}.
\end{equation*}
\end{remark}
\quad
\begin{remark}\label{Fvp}
When $\s>1$, it holds true that
\begin{equation}\label{infME}
	\inf\{M[\psi]+E[\psi]: \psi \in B^1\setminus\{0\} \text{ and } I_z[\psi]=0\}=0,
\end{equation}
where 
\[I_z[\psi]=\|\partial_z\psi\|_{L^2}^2+ \|\psi\|_{L^2}^2-\f{2}{\pi}\|V(\theta)\psi\|_{L_{\theta,z}^{2\s+2}}^{2\s+2}.\]
Therefore, the infimum is not attained.
We will now prove this. For any $\psi\in B^1$, a calculation using the lens transform
\[V(\theta)\psi(y)=\f{1}{\cos{2\theta}}e^{-\f{i}{2}y^2\tan{2\theta}}(e^{i(\f{\tan{2\theta}}{2})\Delta_y}\psi)\big(\f{y}{\cos{2\theta}}\big)\] and the change of variables $\tilde{y}=\f{y}{\cos{2\theta}}$ and $\tilde{\theta}=\f{\tan{2\theta}}{2}$ gives
\begin{equation}\label{lens}
\begin{split}
	&\int_0^{\f{\pi}{2}}\int_{\R^3}|V(\theta)\psi|^{2\s+2} dx d\theta = \int_{-\f{\pi}{4}}^{\f{\pi}{4}}\int_{\R^3}|V(\theta)\psi|^{2\s+2} dx d\theta \\
	&=\int_{\R}(1+4\theta^2)^{\s-1}\int_{\R^3}|e^{i\theta\Delta_y}\psi|^{2\s+2}dxd\theta \ge \|e^{i\theta\Delta_y}\psi\|_{L_{\theta,x}^{2\s+2}(\R\times\R^3)}^{2\s+2}.
\end{split}
\end{equation}
If we consider the scaling
\begin{equation*}
	\psi^r(y,z):=r^{\f{1}{\s}}\psi(ry,z), \quad r>0
\end{equation*}
for any $\psi\in B^1\setminus \{0\}$, it holds that
\begin{equation*}
  \f{	\|V(\theta)\psi^r\|_{L_{\theta,x}^{2\s+2}([0,\f{\pi}{2}]\times\R^3)}}{(\|\partial_z\psi^r\|_{L^2}^2+ \|\psi^r\|_{L^2}^2)^\h}\ge r^{\f{\s-1}{\s+1}}\f{	\|e^{i\theta\Delta_y}\psi\|_{L_{\theta,x}^{2\s+2}(\R\times\R^3)}}{(\|\partial_z\psi\|_{L^2}^2+ \|\psi\|_{L^2}^2)^\h}  \to +\infty\qquad \text{as}\quad r\to +\infty.
\end{equation*}
Let $\psi\in B^1\setminus\{0\}$ satisfy $\|\partial_z\psi\|_{L^2}^2+ \|\psi\|_{L^2}^2 =\f{2}{\pi}\|e^{it\Delta_y}\psi\|_{L_{\theta,x}^{2\s+2}(\R\times\R^3)}^{2\s+2}$ and let $\{r_n\}_{n\in\N}\subset(1, \infty)$ be a sequence such that $r_n\to +\infty$ as $n\to +\infty$. From the second equality in \eqref{lens} we have
\begin{equation*}
	\begin{split}
\f{2}{\pi}\|V(\theta)\psi^{r_n}\|_{L_{\theta,x}^{2\s+2}([0,\f{\pi}{2}]\times\R^3)}^{2\s+2}&=\f{2}{\pi}r_n^{\f{2}{\s}-2} \|(1+\f{4\theta^2}{r_n^4})^{\s-1}e^{i\theta\Delta_y}\psi\|_{L_{\theta,x}^{2\s+2}(\R\times\R^3)}^{2\s+2}\\
	&> \f{2}{\pi}r_n^{\f{2}{\s}-2} \|e^{i\theta\Delta_y}\psi\|_{L_{\theta,x}^{2\s+2}(\R\times\R^3)}^{2\s+2}=\|\partial_z\psi^{r_n}\|_{L^2}^2+ \|\psi^{r_n}\|_{L^2}^2.
\end{split}
\end{equation*}
Therefore, there exists a sequence $\{\al_n\}_{n\in\N}\subset (0,1)$ such that $I[\al_n\psi^{r_n}]=0$ and  we obtain
\begin{equation*}
	S[\al_n\psi^{r_n}]=\f{\al_n^2r_n^{\f{2}{\s}-2} \s}{2\s+2}(\|\partial_z\psi\|_{L^2}^2+\|\psi\|_{L^2}^2)\to 0 \quad \text{as}\quad n\to+\infty.
\end{equation*}
This implies \eqref{infME}.
\end{remark}

\begin{lemma}[\cite{Gdb} Lemma 3.3]\label{3.3}
	Let $0<\s<4$, and let $(a,b)$ satisfy \eqref{asab} and \eqref{asab2}. Then, the constant $d^{a,b}$ is independent of $(a,b)$.
\end{lemma}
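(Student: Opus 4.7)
The plan is to show that any minimizer $Q$ produced by Lemma \ref{Uab} actually solves the \emph{unconstrained} elliptic equation \eqref{ep2}, so that the set of minimizers---and hence the value $d^{a,b}$---is the same for every admissible $(a,b)$.

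Fix $(a,b)$ satisfying \eqref{asab} and \eqref{asab2}, and let $Q\in U^{a,b}$. Given $\phi\in C_0^\infty(\R^3)$ and small $\ep$, set $\psi_\ep:=Q+\ep\phi$. To stay on the constraint surface $\cA^{a,b}$, I would use the implicit function theorem to solve $J^{a,b}[(\psi_\ep)_{\mu(\ep)}^{a,b}]=0$ for a smooth $\mu(\ep)$ with $\mu(0)=0$. The hypothesis needed is $\partial_\mu J^{a,b}[Q_\mu^{a,b}]\big|_{\mu=0}\ne 0$. A direct computation, using the scaling formulas for the various norms collected at the start of this section and substituting $J^{a,b}[Q]=0$ to eliminate $\|V(\theta)Q\|_{L_{\theta,x}^{2\s+2}}^{2\s+2}$, yields
\[
\partial_\mu J^{a,b}[Q_\mu^{a,b}]\big|_{\mu=0}=-2\s a\,p-2(\s a-b)\,q,
\]
where $p:=(2a-b)(K[Q]+M[Q])$ and $q:=\f{2a+b}{2}\|\partial_z Q\|_{L^2}^2$. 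Condition \eqref{asab2} together with $Q\ne 0$ gives $p>0$, $q\ge 0$, and with $a>0$, $\s a-b>0$ from \eqref{asab} the derivative is strictly negative; this is the required nondegeneracy.

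Once $\mu(\ep)$ is obtained, $\ep=0$ is a minimum of the smooth scalar function $\ep\mapsto S[(\psi_\ep)_{\mu(\ep)}^{a,b}]$. Differentiating via the chain rule and using $\partial_\mu S[Q_\mu^{a,b}]\big|_{\mu=0}=J^{a,b}[Q]=0$, the $\mu'(0)$-contribution cancels and one concludes $\langle S'[Q],\phi\rangle=0$ for every $\phi\in C_0^\infty(\R^3)$. Hence $Q$ is a distributional solution of \eqref{ep2}, which standard elliptic regularity promotes to a genuine $B^1$-solution.

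Finally, let $(a',b')$ be any other admissible pair. Since $J^{a,b}[\,\cdot\,]$ is linear in $(a,b)$, it suffices to verify $I[Q]=J^{1,0}[Q]=0$ and $P[Q]=J^{1,2}[Q]=0$: the first follows by pairing \eqref{ep2} with $\bar Q$ and integrating, and the second is the standard Pohozaev identity obtained by pairing \eqref{ep2} with $z\,\partial_z\bar Q$ after a routine truncation argument. Therefore $J^{a',b'}[Q]=0$, so $Q\in\cA^{a',b'}$ and $d^{a',b'}\le S[Q]=d^{a,b}$; interchanging $(a,b)$ and $(a',b')$ with any minimizer furnished by Lemma \ref{Uab} yields the reverse inequality. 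The crux of the whole argument is the strict-sign computation above: it encodes exactly why the conditions \eqref{asab} and \eqref{asab2} are imposed and simultaneously supplies the constraint qualification needed for the implicit function theorem and the cancellation that forces the effective Lagrange multiplier to vanish.
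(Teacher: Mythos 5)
Your argument is correct in substance, but it is not the route the paper takes: the paper offers no proof of Lemma \ref{3.3} at all, quoting it from \cite{Gdb}, where the independence of $d^{a,b}$ is established by a purely variational scaling comparison of the constraint sets $\cA^{a,b}$ in the spirit of Ibrahim--Masmoudi--Nakanishi, using neither a minimizer nor the Euler--Lagrange equation. You instead take $Q\in U^{a,b}$ from Lemma \ref{Uab}, kill the Lagrange multiplier via the strict sign of $\partial_\mu J^{a,b}[Q_\mu^{a,b}]\big|_{\mu=0}=-2\s a\,p-2(\s a-b)q<0$ (with your $p,q$; this computation is correct, as is the decomposition $J^{a,b}=(a-\f{b}{2})I+\f{b}{2}P$ that legitimizes your final linearity step), conclude that $Q$ solves \eqref{ep2}, and then place $Q$ on every other constraint set through $I[Q]=P[Q]=0$, giving $d^{a',b'}\le d^{a,b}$ and equality by symmetry. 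What your route buys is a self-contained argument that in addition identifies constrained minimizers as genuine ground-state solutions of \eqref{ep2}; what the cited route buys is that it needs neither the attainment result of Lemma \ref{Uab} nor any elliptic regularity of minimizers.

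Two details should be tightened. First, pairing \eqref{ep2} with $z\partial_z\bar Q$ does not give $P[Q]=0$ by itself: the raw identity is $-K[Q]-M[Q]+\h\|\partial_z Q\|_{L^2}^2+\f{1}{\pi(\s+1)}\|V(\theta)Q\|_{L_{\theta,x}^{2\s+2}}^{2\s+2}=0$, and you must combine it with the Nehari identity $I[Q]=0$ to reach $P[Q]=0$ (alternatively, once $S'[Q]=0$ you can read off $P[Q]=\partial_\mu S[Q_\mu^{1,2}]\big|_{\mu=0}=0$). Second, the Pohozaev computation (equivalently, the chain rule in the direction $aQ+b\,z\partial_z Q$) requires more than $Q\in B^1$; you should note that the equation together with the norm equivalence \eqref{eqD} bootstraps $Q$ to higher regularity, and that the averaged nonlinearity causes no difficulty because $V(\theta)$ acts only in $y$ and commutes with $z\partial_z$, after which the cut-off argument in $z$ is indeed routine. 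With these two points made explicit, your proof is complete.
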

In the following, we represent $d$ as $d^{a,b}$. Now, we introduce 
\begin{align*}
	&\cK^{a,b,+}:=\{\psi\in B^1: S[\psi]<d, \quad J^{a,b}[\psi]\ge0\},\\
	&\cK^{a,b,-}:=\{\psi\in B^1: S[\psi]<d, \quad J^{a,b}[\psi]<0\}.
\end{align*}
Recall that $P=J^{1,2}$ and 
\begin{align*}\label{Kpm1}
	\cK^+=\{ \vphi\in B^1 : S[\vphi]<d, \quad P[\vphi]\ge0\},\\
	\cK^-=\{ \vphi\in B^1 : S[\vphi]<d, \quad P[\vphi]<0\}.
\end{align*}
\begin{lemma}[\cite{Gdb} Lemma 3.4]\label{Kpm}
	Let $2\le\s<4$. Then, the sets $\cK^{a,b,\pm}$ are independent of $(a,b)$ satisfying \eqref{asab}. 
\end{lemma}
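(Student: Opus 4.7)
My strategy is to reduce the lemma to a connectedness/continuity argument for the sign of $J^{a,b}[\psi]$, using crucially that by Lemma~\ref{3.3} the critical value $d=d^{a,b}$ is already independent of the parameters $(a,b)$. In this way no delicate rescaling or variational comparison between different scaling directions is required.

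First I note that the region of admissible parameters $(a,b)\in\R^2$ described by \eqref{asab} is a nonempty convex (hence connected) subset of the plane, being the intersection of the linear half-planes $a>0$, $b\ge 0$, $2a-b\ge 0$, $\s a-b>0$. It contains $(1,0)$ always, and $(1,2)$ when $\s>2$. Second, from the explicit formula for $J^{a,b}[\psi]$ given earlier, this quantity is affine in $(a,b)$, so for each fixed $\psi\in B^1$ the map $(a,b)\mapsto J^{a,b}[\psi]$ is continuous.

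The only nontrivial step is a sign-stability claim: for $\psi\in B^1\setminus\{0\}$ with $S[\psi]<d$, the functional $J^{a,b}[\psi]$ cannot vanish for any admissible $(a,b)$. Indeed, if $J^{a,b}[\psi]=0$ held for some admissible $(a,b)$, then by definition $\psi\in\cA^{a,b}$, so the variational problem \eqref{minp} combined with Lemma~\ref{3.3} would force $S[\psi]\ge d^{a,b}=d$, contradicting $S[\psi]<d$. This is the point at which Lemma~\ref{3.3} enters essentially: without the $(a,b)$-independence of $d$, a zero of one Nehari-type functional could not be leveraged into a uniform lower bound on the action.

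Combining these three ingredients, for any such $\psi$ the continuous map $(a,b)\mapsto J^{a,b}[\psi]$ is nowhere zero on a connected set, hence keeps a fixed sign throughout. Consequently the subsets $\{\,\psi\in B^1\setminus\{0\}:S[\psi]<d,\ J^{a,b}[\psi]\ge 0\,\}$ and $\{\,\psi\in B^1\setminus\{0\}:S[\psi]<d,\ J^{a,b}[\psi]<0\,\}$ are each independent of the admissible $(a,b)$, while $\psi=0$ trivially belongs to every $\cK^{a,b,+}$. This yields the claim. Because the whole argument rests on the single elementary observation of translating $J^{a,b}[\psi]=0$ into $S[\psi]\ge d$ via Lemma~\ref{3.3}, I do not anticipate any genuine obstacle; the only minor point requiring care is checking that the invocation of Lemma~\ref{3.3} remains valid on the full closed region \eqref{asab} (in particular on the boundary $2a=b$ relevant for the case $\s>2$), which can be handled by continuity of the minimization value.
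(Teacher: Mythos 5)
Your skeleton (convexity/connectedness of the parameter region, affinity and hence continuity of $(a,b)\mapsto J^{a,b}[\psi]$, and sign-stability obtained by converting $J^{a,b}[\psi]=0$ into $S[\psi]\ge d^{a,b}$) is the standard route and works on the interior of the region, i.e. where \eqref{asab2} holds. There, Lemma \ref{3.3} does give $d^{a,b}=d$, so a nonzero $\psi$ with $S[\psi]<d$ cannot lie in $\cA^{a,b}$, and the constant-sign argument goes through; note that for $\s=2$ the constraints in \eqref{asab} already force $2a-b>0$, so that case is fine.

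The gap is exactly the point you defer at the end: for $2<\s<4$ the admissible set \eqref{asab} contains the boundary ray $2a-b=0$, and in particular $(a,b)=(1,2)$, which is the pair defining $P$ and hence the sets $\cK^{\pm}$ themselves. On that boundary Lemma \ref{3.3} (and also Lemma \ref{Uab}) does not apply, so you cannot write $d^{a,b}=d$, and ``continuity of the minimization value'' is not a substitute: $d^{a,b}$ is an infimum over a constraint set $\cA^{a,b}$ that changes with $(a,b)$, and such infima can drop discontinuously at a degenerate parameter — the paper's Remark \ref{Fvp} shows a closely related infimum collapsing to $0$ precisely because one scaling direction degenerates. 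What you need on the boundary is the statement that $\psi\in\cA^{a,2a}$, $\psi\neq 0$ forces $S[\psi]\ge d$; this is Lemma \ref{3.2} in the paper, proved by a genuine rescaling argument ($\psi^r(y,z)=r^a\psi(y,r^{2a}z)$, finding $r_0$ with $I[\psi^{r_0}]=0$ and showing $r\mapsto S[\psi^r]$ is maximized at $r=1$), and the paper states explicitly that this lemma is required to obtain Lemma \ref{Kpm}. Without it, your argument cannot rule out a nonzero $\psi$ with $S[\psi]<d$ and $J^{1,2}[\psi]=0$ while $J^{1,0}[\psi]<0$, so the identification $\cK^{a,b,-}=\cK^{-}$ (and likewise $\cK^{a,b,+}=\cK^{+}$) at the boundary pair is not established. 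Supplying the scaling argument of Lemma \ref{3.2} (or citing it) closes the gap; as written, the proof is incomplete for the case it most needs, $2<\s<4$ with $b=2a$.
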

Hence, when $2<\sigma<4$ and $(a,b)$ satisfies \eqref{asab}, $\cK^{\pm}=\cK^{a,b,\pm}$ holds. To obtain Lemma \ref{Kpm},  we need the following Lemma \ref{3.2}.
In particular, by Lemma \ref{3.2}, we can confirm that the set $\cK^+$ is open. 

\begin{lemma}[\cite{Gdb} Remark 3.2]\label{3.2}
	Let $2<\s<4$, and let $(a,b)$ satisfy \eqref{asab} and $2a-b=0$. If $\psi\in \cA^{a,b}$, then $S[\psi]\ge d$. 
\end{lemma}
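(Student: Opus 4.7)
The plan is to rescale $\psi$ along the one-parameter flow $\mu \mapsto \psi_\mu := \psi_\mu^{a,b}$ itself and then compare with the reference minimization at $(1,0)$. Because $2a - b = 0$, this scaling preserves $\|\psi\|_{L^2}^2 + \|\nabla_y\psi\|_{L^2}^2 + \|y\psi\|_{L^2}^2$, multiplies $\|\partial_z\psi_\mu\|_{L^2}^2$ by $e^{4a\mu}$, and multiplies the potential energy by $e^{2\sigma a\mu}$. The hypothesis $\sigma > 2$ (equivalent, under $b=2a$, to the condition $\sigma a - b > 0$ in \eqref{asab}) is crucial: it guarantees the two exponents $4a < 2\sigma a$ are strictly separated.

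The first step is to locate the maximum of $h(\mu) := S[\psi_\mu]$. Using $J^{a,b}[\psi] = 0$ to eliminate $\|\partial_z\psi\|_{L^2}^2$ in favor of the potential energy, a direct computation gives
\begin{equation*}
  h'(\mu) = J^{a,b}[\psi_\mu] = \frac{2\sigma a}{\pi(\sigma+1)}\|V(\theta)\psi\|_{L^{2\sigma+2}_{\theta,x}}^{2\sigma+2}\bigl(e^{4a\mu} - e^{2\sigma a\mu}\bigr),
\end{equation*}
which has the sign of $-\mu$. Hence $h$ attains its unique strict maximum at $\mu = 0$, that is, $S[\psi] \ge S[\psi_\mu]$ for every $\mu \in \R$.

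The second step is to find $\mu_0 \in \R$ with $\psi_{\mu_0} \in \cA^{1,0}$. Since $\psi\neq 0$ and $J^{a,b}[\psi] = 0$ forces $\|\partial_z\psi\|_{L^2}^2 = \frac{\sigma}{\pi(\sigma+1)}\|V(\theta)\psi\|_{L^{2\sigma+2}_{\theta,x}}^{2\sigma+2}$, both quantities must be strictly positive (otherwise $V(\theta)\psi\equiv 0$ and then $\psi\equiv 0$). Consequently
\begin{equation*}
  I[\psi_\mu] = \|\psi\|_{L^2}^2 + \|\nabla_y\psi\|_{L^2}^2 + \|y\psi\|_{L^2}^2 + e^{4a\mu}\|\partial_z\psi\|_{L^2}^2 - e^{2\sigma a\mu}\tfrac{2}{\pi}\|V(\theta)\psi\|_{L^{2\sigma+2}_{\theta,x}}^{2\sigma+2}
\end{equation*}
tends to $\|\psi\|_{L^2}^2 + \|\nabla_y\psi\|_{L^2}^2 + \|y\psi\|_{L^2}^2 > 0$ as $\mu\to-\infty$ (both $\mu$-dependent terms vanish, the negative one faster since $2\sigma a > 4a$) and to $-\infty$ as $\mu\to +\infty$. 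By the intermediate value theorem, some $\mu_0 \in \R$ satisfies $I[\psi_{\mu_0}] = 0$, so $\psi_{\mu_0}\in \cA^{1,0}$.

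Combining the two steps with Lemma \ref{3.3} (which gives $d^{1,0} = d$) yields
\begin{equation*}
  S[\psi] = h(0) \ge h(\mu_0) = S[\psi_{\mu_0}] \ge d^{1,0} = d,
\end{equation*}
as desired. The only subtle point — and the place where the full strength of the hypothesis $2 < \sigma$ is used — is in step one: if $\sigma = 2$ the two exponents coincide, $h$ becomes constant, and the argument degenerates. The remaining inputs (non-vanishing of $\|\partial_z\psi\|_{L^2}$, asymptotics of $I[\psi_\mu]$) are routine consequences of $\psi\ne 0$ together with $J^{a,b}[\psi] = 0$.
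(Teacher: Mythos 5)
Your proof is correct and is essentially the paper's own argument: you use the same scaling $\psi\mapsto e^{a\mu}\psi(y,e^{2a\mu}z)$ (the paper writes it with $r=e^{\mu}$), the same use of $J^{a,b}[\psi]=0$ to show $S$ is maximized along the orbit at the original $\psi$, and the same intermediate-value argument producing a rescaled element of $\cA^{1,0}$, whence $S[\psi]\ge d$.
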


\begin{proof}
	We consider the following scaling: 
	\begin{equation*}
		\psi^r(y,z):=r^{a}\psi(y,r^{b}z)=r^{a}\psi(y,r^{2a}z),\qquad r>0.
	\end{equation*}
	Then, 
	\begin{equation}
		I[\psi^r]=r^{4a}\|\partial_z\psi\|_{L^2}^2-r^{2\s a}\f{2}{\pi}\|V(\theta)\psi\|_{L_{\theta,x}^{2\s+2}}^{2\s+2} + 2K[\psi] + 2M[\psi].
	\end{equation}
	Since $J^{a,b}[\psi]=0$ and $2a=b$, we have
	\begin{equation}
		\begin{split}
			\|\partial_z\psi\|_{L^2}^2=\f{\s}{\pi(\s+1)}\|V(\theta)\psi\|_{L_{\theta,x}^{2\s+2}}^{2\s+2}.
		\end{split}
	\end{equation}
	Thus, 
	\begin{equation}
		I[\psi^r]=\f{2}{\pi}\Big(\f{\s}{2(\s+1)}r^{4a}-r^{2\s a}\Big)\|V(\theta)\psi\|_{L_{\theta,x}^{2\s+2}}^{2\s+2} + 2K[\psi] +2 M[\psi].
	\end{equation}
	As $\s>2$, there exists $r_0\in (0,\infty)$ such that $I[\psi^{r_0}]=0$. This implies $S[\psi^{r_0}]\ge d$. Moreover, a simple calculation yields
	\begin{equation*}
		\partial_r S[\psi^r]>0 \text{ for } r<1, \qquad  	\partial_r S[\psi^r]|_{r=1}= J^{a,2a}[\psi]=0, \qquad  \partial_r S[\psi^r]<0 \text{ for } r>1.
	\end{equation*}
	Hence, the function $(0,\infty)\ni r \mapsto S[\psi^r]$ reaches its maximum at $r=1$. Therefore, we obtain $S[\psi]\ge S[\psi^{r_0}]\ge d$.
\end{proof}

In fact, we can extend Lemma \ref{Kpm} to the case $\s=2$ and $2a-b=0$. This is the key proposition for proving Theorem \ref{main} when $\s=2$. 

\begin{prop}\label{Kpm2}
	Let $\s=2$, and let $(a,b)$ satisfy \eqref{asab} except $\s a-b >0$. Then, $\cK^{\pm}=\cK^{a,b,\pm}$ holds true. 
\end{prop}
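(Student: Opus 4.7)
The plan is to reduce Proposition \ref{Kpm2} to the algebraic identity $J^{a,2a}=a\,P$, which holds identically on $B^1$ when $\sigma=2$. The proposition then drops out of the definitions of the sets involved.

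\textbf{Step 1 (case $2a-b>0$).} At $\sigma=2$ one has $\sigma a-b=2a-b$, so the strict inequality $2a-b>0$ already forces every condition of \eqref{asab}. Lemma~\ref{Kpm} then gives $\cK^{a,b,\pm}=\cK^{\pm}$ at once. It remains to handle the boundary case $b=2a$, $a>0$.

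\textbf{Step 2 (case $b=2a$).} Here the $(a,b)$-dependent coefficients in
\[
J^{a,b}[\psi] = (2a-b)(K[\psi]+M[\psi]) + \frac{2a+b}{2}\|\partial_z\psi\|_{L^2}^2 - \frac{(2\sigma+2)a-b}{\pi(\sigma+1)}\|V(\theta)\psi\|_{L^{2\sigma+2}_{\theta,x}}^{2\sigma+2}
\]
collapse to $(2a-b)=0$, $\tfrac{2a+b}{2}=2a$, and $(2\sigma+2)a-b=2\sigma a$, so
\[
J^{a,2a}[\psi] = 2a\,\|\partial_z\psi\|_{L^2}^2 - \frac{2\sigma a}{\pi(\sigma+1)}\|V(\theta)\psi\|_{L^{2\sigma+2}_{\theta,x}}^{2\sigma+2} = a\,P[\psi].
\]
(The same identity can be read off from the scaling relation $\psi_\mu^{a,2a}=\psi_{a\mu}^{1,2}$ by applying the chain rule to $\mu\mapsto S[\psi_\mu^{a,b}]$.)

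\textbf{Step 3 (conclusion).} Since $a>0$, the identity $J^{a,2a}=aP$ implies that $J^{a,2a}[\psi]\ge 0$ if and only if $P[\psi]\ge 0$, and analogously for the strict negative sign. Using the common threshold $d=S[Q]$ provided by Lemma~\ref{3.3} in the definitions \eqref{K+0}--\eqref{K-0} of $\cK^{\pm}$ and the analogous definition of $\cK^{a,2a,\pm}$, we conclude $\cK^{a,2a,\pm}=\cK^{\pm}$.

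The whole argument is algebraic, so I do not anticipate any real obstacle. The only delicate point worth underlining is that this clean proportionality is specific to $\sigma=2$: for $\sigma>2$ the coefficient $(2\sigma+2)a-b$ does not reduce to $2\sigma a$ when $b=2a$, and one must invoke the scaling-and-concavity argument of Lemma~\ref{3.2} instead.
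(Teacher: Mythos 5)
Your Step 1 is where the argument breaks down, and unfortunately that case is the entire content of the proposition. At $\sigma=2$, Lemma \ref{Kpm} only asserts that the sets $\cK^{a,b,\pm}$ coincide \emph{with one another} for all pairs $(a,b)$ satisfying \eqref{asab}, which at $\sigma=2$ means $2a-b>0$; it does not identify these common sets with $\cK^{\pm}=\cK^{1,2,\pm}$, because the pair $(1,2)$ defining $P$ violates the condition $\sigma a-b>0$ exactly when $\sigma=2$, hence lies outside the scope of Lemma \ref{Kpm} (and of Lemma \ref{3.2}, whose scaling argument needs $\sigma>2$). This is precisely why the paper draws the conclusion $\cK^{\pm}=\cK^{a,b,\pm}$ from Lemma \ref{Kpm} only for $2<\sigma<4$ and states Proposition \ref{Kpm2} separately. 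So the case $2a-b>0$ is the nontrivial one, and your proposal leaves it unproved; your Step 2 identity $J^{a,2a}=aP$ is correct but only settles the degenerate pairs $b=2a$, which is the easy half (all the sets $\cK^{a,b,\pm}$ are defined with the same threshold $d$, so proportionality of the functionals gives equality of the sets immediately).

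What is actually needed for $2a-b>0$, and what the paper does: by continuity of $(a,b)\mapsto J^{a,b}[\psi]$ and the $(a,b)$-independence from Lemma \ref{Kpm}, one gets $\{\psi: S[\psi]<d,\ P[\psi]>0\}\subset\cK^{a,b,+}$ and $\cK^{-}\subset\cK^{a,b,-}$, so the whole issue is the borderline set $\{\psi: S[\psi]<d,\ P[\psi]=0\}$, which must be shown disjoint from $\cK^{a,b,-}=\cK^{1,0,-}$. The paper argues by contradiction: if $S[\psi]<d$, $I[\psi]<0$ and $P[\psi]=0$, then for the scaling $\psi^{r}=r^{1/2}\psi(y,rz)$ one has $S[\psi^{r}]=K[\psi]+M[\psi]+\tfrac{r^{2}}{4}P[\psi]=S[\psi]<d$ for every $r>0$, while (using $P[\psi]=0$ to eliminate $\|\partial_z\psi\|_{L^2}^2$) $I[\psi^{r}]$ equals $2K[\psi]+2M[\psi]$ minus a positive, increasing power of $r$ times $\|V(\theta)\psi\|_{L_{\theta,x}^{6}}^{6}$, so $I[\psi^{r_0}]=0$ for some $r_0$, forcing $S[\psi^{r_0}]\ge d$ by the definition of $d$ --- a contradiction. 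Nothing in your proposal addresses this borderline set. Finally, your closing remark is mistaken: $(2\sigma+2)a-b=2\sigma a$ whenever $b=2a$, for every $\sigma$, so the proportionality $J^{a,2a}=aP$ is not special to $\sigma=2$; what is special to $\sigma=2$ is only that $(1,2)$ drops out of the admissible class \eqref{asab}, which is exactly what invalidates the appeal to Lemma \ref{Kpm} in your Step 1.
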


\begin{proof}
	From Lemmas \ref{3.3} and \ref{Kpm}, for any $(a,b)$ that satisfies
	\begin{equation}\label{asab3}
		a>0, \quad b\ge 0, \quad 2a-b>0,
	\end{equation}
	$d^{a, b}$ and $\cK^{a,b,-}$ are independent of $(a,b)$. As the mapping $(a,b)\mapsto J^{a,b}[\psi]$ is continuous for any fixed $\psi\in B^1$, we have
	
	\begin{equation*}
		\{\psi\in B^1 : S[\psi]<d, \quad P[\psi]>0\} \subset \cK^{a,b,+} ,\quad 	\cK^{-} \subset \cK^{a,b,-}
	\end{equation*}
	for $(a,b)$ satisfying \eqref{asab3}. Since $\cK^+\cup \cK^-=\cK^{a,b,+}\cup \cK^{a,b,-}$, it is sufficient to prove that 
	\begin{equation}\label{bdK}
		\{\psi\in B^1 : S[\psi]<d, \quad P[\psi]=0\} \subset \cK^{a,b,+}
	\end{equation}
	for $(a,b)$ satisfying \eqref{asab3}. As 
	\[ \{\psi\in B^1 : S[\psi]<d, \quad P[\psi]=0\}\subset \cK^{a,b,+}\cup \cK^{a,b,-}   \]
	holds, \eqref{bdK} follows from
	\begin{equation}\label{empK}
		\{\psi\in B^1 : S[\psi]<d, \quad P[\psi]=0\} \cap \cK^{a,b,-}=\emptyset .
	\end{equation}
	We now prove this. We assume that thete exists $\psi \in \cK^{a,b,-}=\cK^{1,0,-}$ that satisfies $P[\psi]=0$. We note that $\psi \neq 0$. Let
	\begin{equation*}
		\psi^r:= r^\h \psi(y,rz), \qquad r>0.
	\end{equation*}
	Then, for any $r>0$,
	\begin{equation}\label{Sr}
		S[\psi^r]= K[\psi] +  M[\psi]+ \f{r^2}{4}P[\psi]= K[\psi] +  M[\psi]= S[\psi]<d.
	\end{equation}
	Since $P[\psi]=0$, it holds that
	\begin{equation*}
		\|\partial_z \psi\|_{L^2}^2= \f{2}{3\pi}\|V(\theta)\psi\|_{L_{\theta,x}^{6}}^6
	\end{equation*}
	and 
	\begin{equation*}
		\begin{split}
			I[\psi^r]= 2K[\psi]+ 2M[\psi] -r^4\f{4}{3\pi}\|V(\theta)\psi\|_{L_{\theta,x}^{6}}^6.
		\end{split}
	\end{equation*}
	Thus, there exists $r_0\in (0,1)$ such that $I[\psi^{r_0}]=0$ and, by the definition of $d$, $S[\psi^{r_0}]\ge d$. This contradicts \eqref{Sr}. Therefore, we obtain \eqref{empK}.	
\end{proof}

\begin{lemma}[\cite{Gdb} Lemma 3.5]\label{3.5}
	Let $\psi\in \cK^+$. Then,
	\begin{equation*}
		\f{\s}{2(\s+1)}\|\psi\|_{B^1}\le S[\psi] \le \h \|\psi\|_{B^1}.
	\end{equation*} 
\end{lemma}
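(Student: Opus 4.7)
The plan is to isolate the Nehari functional $I=J^{1,0}$ inside $S$ and then read both bounds off a single algebraic identity. Since $\lambda=-1$, the action reads
\begin{equation*}
S[\psi] = \tfrac{1}{2}\bigl(\|\nabla_x\psi\|_{L^2}^2 + \|y\psi\|_{L^2}^2 + \|\psi\|_{L^2}^2\bigr) - \tfrac{1}{\pi(\sigma+1)}\|V(\theta)\psi\|_{L^{2\sigma+2}_{\theta,x}}^{2\sigma+2},
\end{equation*}
so the upper bound is immediate: the potential piece enters with a minus sign, hence $S[\psi]$ is bounded above by the quadratic part, which is comparable to $\tfrac{1}{2}\|\psi\|_{B^1}^2$ via the $B^1$-norm equivalence recalled in Section \ref{prel}.

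For the lower bound I will first invoke the identification $\cK^+=\cK^{1,0,+}$, provided by Lemma \ref{Kpm} when $2<\sigma<4$ and by Proposition \ref{Kpm2} when $\sigma=2$ (observe that $(a,b)=(1,0)$ satisfies \eqref{asab} for all admissible $\sigma$). This shows that every $\psi\in\cK^+$ satisfies $I[\psi]=J^{1,0}[\psi]\ge0$. Then a direct computation, engineered to cancel the $L^{2\sigma+2}_{\theta,x}$-contribution in $S$, yields the identity
\begin{equation*}
S[\psi] \;=\; \tfrac{1}{2(\sigma+1)}\,I[\psi] \;+\; \tfrac{\sigma}{2(\sigma+1)}\bigl(\|\nabla_x\psi\|_{L^2}^2 + \|y\psi\|_{L^2}^2 + \|\psi\|_{L^2}^2\bigr),
\end{equation*}
which amounts to checking $\tfrac{1}{\pi(\sigma+1)}=\tfrac{1}{2(\sigma+1)}\cdot\tfrac{2}{\pi}$. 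Discarding the nonnegative term $\tfrac{1}{2(\sigma+1)}I[\psi]$ and again using the $B^1$-equivalence gives the claimed bound $\tfrac{\sigma}{2(\sigma+1)}\|\psi\|_{B^1}^2\le S[\psi]$.

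There is essentially no analytic obstacle here; the statement is a consequence of one algebraic identity. The only piece of genuine content is recognizing that the relevant coercivity on $\cK^+$ really is $I[\psi]\ge0$, which in turn relies on the nontrivial $(a,b)$-independence of $\cK^{a,b,+}$ furnished by Lemma \ref{Kpm} and Proposition \ref{Kpm2}; once that is in hand, the lemma reduces to the one-line decomposition above.
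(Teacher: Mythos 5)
Your proof is correct and coincides with the argument the paper relies on: the paper does not prove Lemma \ref{3.5} itself but imports it from \cite{Gdb}, and the proof there is exactly your identity $S[\psi]=\f{1}{2(\s+1)}I[\psi]+\f{\s}{2(\s+1)}\big(\|\nabla_x\psi\|_{L^2}^2+\|y\psi\|_{L^2}^2+\|\psi\|_{L^2}^2\big)$ combined with $I[\psi]\ge0$ on $\cK^+$, which follows from the $(a,b)$-independence $\cK^+=\cK^{1,0,+}$. Two harmless remarks: for $(a,b)=(1,0)$ Lemma \ref{Kpm} already covers $\s=2$, so invoking Proposition \ref{Kpm2} is unnecessary, and the stated inequalities should be read for $\|\psi\|_{B^1}^2$ with constants absorbed by the equivalence of the quadratic form with the $B^1$ norm, exactly as you note.
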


\begin{lemma}[\cite{Gdb} Lemma 3.6]\label{3.6}
	Let $2<\s<4$ and $\psi\in \cK^-$. Then,
	\begin{equation*}
		P[\psi]\le -\f{1}{4}(d-S[\psi])<0.
	\end{equation*} 
\end{lemma}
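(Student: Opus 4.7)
The plan is to analyze the one-parameter family $\psi^r(y,z) := r^{1/2}\psi(y, rz)$ for $r > 0$, which corresponds to the $(a,b)=(\h,1)$ scaling. This rescaling preserves $K[\psi] + M[\psi]$, and with $A := \|\partial_z\psi\|_{L^2}^2$ and $B := \f{\s}{\pi(\s+1)}\|V(\theta)\psi\|_{L_{\theta,x}^{2\s+2}}^{2\s+2}$, a direct computation yields
\[
k(r) := S[\psi^r] = K[\psi] + M[\psi] + \f{A}{2}r^2 - \f{B}{\s}r^\s, \qquad r\,k'(r) = P[\psi^r] = 2(r^2 A - r^\s B).
\]

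Since $\psi \in \cK^-$ forces $P[\psi] = 2(A - B) < 0$, one has $A < B$; moreover any nontrivial $\psi \in B^1$ has $A > 0$ (a $z$-independent $L^2$ function on $\R^3$ must vanish) and $B > 0$. Consequently, the equation $P[\psi^r] = 0$ has a unique positive root $r_0 = (A/B)^{1/(\s-2)}$, and the condition $A<B$ places it in $(0,1)$. Since $\psi^{r_0}$ is a nonzero element of $\cA^{1,2}$, Lemma \ref{3.2} (applicable to the pair $(a,b)=(1,2)$ precisely because $\s > 2$) gives $k(r_0) = S[\psi^{r_0}] \ge d$.

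To conclude, substitute the stationarity identity $r_0^\s B = r_0^2 A =: C$ into $k(r_0)$ to obtain
\[
k(r_0) - k(1) = \f{(\s-2)C - \s A + 2B}{2\s}.
\]
Because $r_0 < 1$ and $\s > 2$, the strict bound $C = r_0^2 A < A$ implies $(\s-2)C - \s A + 2B < 2(B - A) = -P[\psi]$. Chaining these inequalities,
\[
d - S[\psi] \le k(r_0) - k(1) < -\f{P[\psi]}{2\s} \le -4P[\psi],
\]
which is exactly $P[\psi] \le -\f{1}{4}(d - S[\psi])$, and the right-hand side is strictly negative because $\psi\in\cK^-$ enforces $S[\psi]<d$. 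The only nontrivial ingredient is Lemma \ref{3.2}; everything else is one-dimensional calculus, and the main thing to get right is the strict inequality $r_0 \in (0,1)$ driving the bound $C < A$.
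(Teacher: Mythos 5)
Your proof is correct. The paper does not reprove this lemma (it imports it from Ardila--Carles \cite{Gdb}), but your argument is exactly the intended mechanism and uses precisely the ingredient the paper sets up for this purpose: rescale in $z$ with the $(a,b)=(1,2)$ scaling, locate the unique zero $r_0=(A/B)^{1/(\sigma-2)}\in(0,1)$ of $P[\psi^r]$ (using $P[\psi]<0$, $A>0$, $\sigma>2$), invoke Lemma \ref{3.2} with $(a,b)=(1,2)$ (admissible since $\sigma a-b=\sigma-2>0$) to get $S[\psi^{r_0}]\ge d$, and compare $k(r_0)$ with $k(1)=S[\psi]$. Two minor remarks. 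First, the display $r\,k'(r)=P[\psi^r]$ is off by a factor of $2$: in fact $r\,k'(r)=Ar^2-Br^\sigma=\tfrac12 P[\psi^r]$ (your $r$-parametrization corresponds to $J^{1/2,1}=\tfrac12 J^{1,2}$); this identity is never used downstream — only the location of the critical point and the exact formula for $k$ enter — so nothing breaks. Second, your computation actually yields the stronger bound $P[\psi]<-2\sigma\,(d-S[\psi])$, since $d-S[\psi]\le k(r_0)-k(1)<\tfrac{B-A}{\sigma}=-\tfrac{P[\psi]}{2\sigma}$; the stated constant $\tfrac14$ only appears because of the deliberately lossy step $-\tfrac{P[\psi]}{2\sigma}\le-4P[\psi]$, which is valid (as $P[\psi]<0$ and $\tfrac{1}{2\sigma}\le 4$) and suffices for the lemma as stated.
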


\section{Sharp threshold for blow up}\label{sharpt}

In this section, we prove Theorem \ref{main} except for the scattering result.

\begin{proof}[Proof of Theorem \ref{main}]
	Let $\phi_0\in \cK^+$. Because $S$ is the conserved quantity of \eqref{NLS} and $d$ is the minimum of \eqref{minp}, the corresponding solution $\phi(t)$ belongs to $\cK^+$ as long as it exists.
	From Lemma \ref{3.5} and the blow-up alternative, $\phi(t)$  exists globally.\\ 
	
	Similarly, when $\phi_0\in \cK^-$, the corresponding solution $\phi(t)$ belongs to $\cK^-$ for every $t\in [0,T^+)$, where $T^+$ represents the maximum positive existence time. \\
	
	We assume that $T^+<\infty$. Then, from Proposition \ref{wp} and the conservation law of $K$, we have $\lim_{t\to  T^+}\|\partial_z \phi(t)\|_{L^2}=\infty$. We prove that if $T^+=\infty$, then there exists a sequence $\{t_k\}_{k=1}^\infty$ such that $\lim_{k\to\infty}t_k=\infty$ and $\lim_{k\to\infty}\|\partial_z \phi(t_k)\|_{L^2}=\infty$. Suppose that 
	\begin{equation}\label{k0}
		k_0:= \sup_{t\in[0,\infty)}\|\partial_z \phi(t)\|_{L^2}<\infty.
	\end{equation}
	We define
	\begin{equation}\label{vir}
		W(t):= \int_{\R^3} \chi(z)|\phi(t)|^2 dx,
	\end{equation}
	where $\chi$ is $C^4(\R)$ and is radial. By direct calculations, we obtain
	\begin{equation}\label{vir2}
		W'(t)= 2\text{Im}\int_{\R^3}\partial_z\phi(t) \overline{\phi(t)} \chi'(z) dx, 
	\end{equation}
	
	\begin{equation}\label{vir3}
		\begin{split}
			W''(t)=& 4 \int_{\R^3}|\partial_z\phi(t)|^2 \chi''(z) dx -\f{4\s}{\pi(\s+1)} \int_0^{\f{\pi}{2}}\int_{\R^3}|V(\theta)\phi(t)|^{2\s+2} \chi''(z) d\theta dx	\\
			&- \int_{\R^3}|\phi(t)|^2 \chi^{(4)}(z) dx.
		\end{split}
	\end{equation}
	
	\begin{lemma}[\cite{Gdb} Lemma 4.1]\label{4.1}
		Let $\eta, R>0$. Then, for all $t\le \eta R/(8k_0\|\phi_0\|_{L^2})$, we have
		\begin{equation}
			\int_{\R^2\times\{|z|\ge R\}}|\phi(t,x)|^2dx \le \eta + o_R(1), \qquad R\to \infty.
		\end{equation}
	\end{lemma}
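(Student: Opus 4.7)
The plan is to localize the $L^2$-mass outside the strip $\{|z|\ge R\}$ by a smooth cutoff in the $z$-variable and to control its time evolution through the first-order virial identity \eqref{vir2}, using mass conservation together with the standing uniform bound \eqref{k0} on $\|\partial_z\phi(t)\|_{L^2}$.

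Concretely, I would fix a radial profile $\chi_0\in C^4(\R)$ with $0\le \chi_0\le 1$, $\chi_0\equiv 0$ on $\{|z|\le 1/2\}$, $\chi_0\equiv 1$ on $\{|z|\ge 1\}$, and $\|\chi_0'\|_{L^\infty}\le 4$, and apply \eqref{vir}--\eqref{vir2} with the rescaled cutoff $\chi(z):=\chi_0(z/R)$, which satisfies $\|\chi'\|_{L^\infty}\le 4/R$. Then
\begin{equation*}
\int_{\R^2\times\{|z|\ge R\}}|\phi(t,x)|^2\,dx\le W(t),\qquad W(0)\le \int_{\R^2\times\{|z|\ge R/2\}}|\phi_0(x)|^2\,dx=o_R(1)
\end{equation*}
as $R\to\infty$, by dominated convergence since $\phi_0\in L^2(\R^3)$.

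Next, applying Cauchy--Schwarz to \eqref{vir2}, together with mass conservation $\|\phi(t)\|_{L^2}=\|\phi_0\|_{L^2}$ and the assumption \eqref{k0}, I would estimate
\begin{equation*}
|W'(t)|\le 2\|\partial_z\phi(t)\|_{L^2}\|\phi(t)\|_{L^2}\|\chi'\|_{L^\infty}\le \f{8\,k_0\,\|\phi_0\|_{L^2}}{R}.
\end{equation*}
Integrating in time and restricting to $t\le \eta R/(8k_0\|\phi_0\|_{L^2})$ gives $W(t)\le W(0)+\eta\le \eta+o_R(1)$, which combined with the first display above yields the claim.

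There is no genuine obstacle here: the result is a standard mass-localization estimate driven by a truncated first-order virial, and the time-scale $\eta R/(8k_0\|\phi_0\|_{L^2})$ appearing in the statement is exactly dictated by the bound $|W'(t)|\le 8k_0\|\phi_0\|_{L^2}/R$. Every analytic ingredient — mass conservation, the uniform $L^2$-bound on $\partial_z\phi$, and the $L^2$-integrability of the initial data — has already been established. The only mild point of calibration is choosing the profile $\chi_0$ so that $\|\chi_0'\|_{L^\infty}\le 4$, which sharpens the constant into the form stated in the lemma.
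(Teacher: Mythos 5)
Your proposal is correct, and it is essentially the standard truncated-virial mass-localization argument behind the cited result \cite{Gdb} (Lemma 4.1), which the paper invokes without reproducing: a rescaled cutoff in $z$ applied in \eqref{vir}--\eqref{vir2}, Cauchy--Schwarz together with mass conservation and the uniform bound \eqref{k0} to get $|W'(t)|\le 8k_0\|\phi_0\|_{L^2}/R$, and the $o_R(1)$ tail of $\phi_0$ at $t=0$. No gaps; the time scale in the statement is recovered exactly as you indicate.
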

	
	\quad \\
	
	Next, we choose $\chi$ such that
	\begin{equation}\label{chi1}
		\chi(z)=
		\left\{
		\begin{array}{lll}
			z^2 &&	 0<|z|\le R \\
			0  &&  2R<|z|
		\end{array}
		\right. 
	\end{equation}
	with 
	\begin{equation}\label{chi2}
		0\le \chi \le z^2, \quad \chi''\le 2, \quad \chi^{(4)}\le \f{4}{R}.
	\end{equation}
	Then, \eqref{vir3} can be rewritten as
	\begin{equation}
		\begin{split}
			W''(t)&= 4P[\phi(t)] + 4 \int_{\R^3}|\partial_z\phi(t)|^2 (\chi''(z)-2) dx \\ 
			&\quad -\f{4\s}{\pi(\s+1)}\int_0^{\f{\pi}{2}}\int_{\R^3}|V(\theta)\phi(t)|^{2\s+2}(\chi''(z)-2)d\theta dx	 - \int_{\R^3}|\phi(t)|^2 \chi^{(4)}(z) dx. \\
		\end{split}
	\end{equation}
	If $2<\s<4$, from Lemma \ref{3.6} and the conservation law of $S$, we have for all $t\in [0,T^+)$ that
	\begin{equation*}
		P[\phi(t)]\le- \f{1}{4}(d-S[\phi(t)])= -\f{1}{4}(d-S[\phi_0])=:-C_1<0.
	\end{equation*}
	If $\s=2$, then $P[\phi(t)]=4E[\phi(t)]$. That is, $P$ is the conserved quantity of \eqref{NLS}. Since $\phi(t)\in \cK^-$, we have
	\begin{equation*}
		P[\phi(t)]= P[\phi_0]=:-C_1<0.
	\end{equation*}
	From \eqref{chi2}, for all $t\in [0,T^+)$ we have
	\begin{equation*}
		\begin{split}
			\int_{\R^3}|\partial_z\phi(t)|^2 (\chi''(z)-2) dx \le 0.
		\end{split}
	\end{equation*}
	In addition, from \eqref{chi2}, Lemma \ref{bdnl}, \eqref{k0}, and Lemma \ref{4.1}, we obtain for all $t\in [0,\f{\e R}{8k_0\|\phi_0\|_{L^2}}]$ that
	\begin{equation}
		\begin{split}
			&\int_0^{\f{\pi}{2}}\int_{\R^3}|V(\theta)\phi(t)|^{2\s+2}(\chi''(z)-2)d\theta dx \\
			&\lesssim 	\|V(\theta)\phi(t)\|_{L_{\theta,x}^{4}(\{|z|> R\})}^{\f{4(4-\s)}{3}}
			\|V(\theta)\phi(t)\|_{L_{\theta,x}^{10}(\{|z| > R\})}^{\f{10(\s-1)}{3}}\\
			&\lesssim  \|\phi(t)\|_{L^2(\{|z|> R\})}^{4-\s}\|\partial_z \phi(t)\|_{L^2}^{\s}\|H^\h \phi(t)\|_{L^2}^{2\s-2}\\
			&\lesssim  \|\phi(t)\|_{L^2(\{|z|> R\})}^{4-\s} k_0^\s K[\phi_0]^{2\s-2}\\
			&\le C (\eta^{4-\s}+ o_R(1)), \qquad R\to \infty.
		\end{split}
	\end{equation}
	Where $C$ is a positive constant that depends on $\s$ and $\phi_0$. Finally, from Lemma \ref{4.1}, we obtain 
	\begin{equation*}
		\int_{\R^3}|\phi(t)|^2 \chi^{(4)}(z) dx \lesssim \eta +o_R(1).
	\end{equation*}
	Therefore, for all $t\le \eta R/(8k_0\|\phi_0\|_{L^2})$, we have
	\begin{equation*}
		W''(t)\le -4C_1 +C_2(\eta^{4-\s} +\eta +o_R(1)), \qquad R\to \infty.
	\end{equation*}
	where $C_1$ and $C_2$ are positive constants. 
	Choosing $\eta$ sufficiently small and taking $R$ large enough,  it follows that for all $t\le \eta R/(8 k_0 \|\phi_0\|_{L^2})$ that
	
	\begin{equation*}
		W''(t)\le -2C_1.
	\end{equation*}
	By integrating this inequality from $0$ to $T=\eta R/(8 k_0 \|\phi_0\|_{L^2})$, we infer
	\begin{equation*}
		W(T)\le W(0)+W'(0)T -C_1 T^2.
	\end{equation*}
	Based on the argument in \cite[Section 4]{Gdb}, we obtain $W(0)\le o_R(1)R^2$ and $W'(0)\le o_R(1)R$. Hence, we obtain
	\begin{equation*}
		W(T)\le(o_R(1)-C_3 \eta^2)R^2,	
	\end{equation*}
	where $C_3$ is a positive constant that depends on $\s$ and $\phi_0$. Taking $R$ large so that $o_R(1)-C_3 \eta^2<0$, we have $W(T)<0$, which contradicts the fact that $W(T)$ is positive. Therefore, we have the conclusion. \\
	
	Finally, we consider the case $\phi_0\in \cK^-\cap \Sigma^1$. Then, according to the local theory and the property of $\cK^-$, the corresponding solution $\phi(t)$ belongs to $\cK^-\cap \Sigma^1$ for any $t\in [0,T^+)$. Suppose that $T^+=\infty$. We set $\chi(z)=z^2$:
	\begin{equation}\label{W2}
		W(t)=\|z\phi(t)\|_{L_x^2}^2>0.
	\end{equation}
	Therefore, $W(t)$ exists globally. From \eqref{vir3}, we have that 
	\begin{equation*}
		W''(t)=4P[\phi(t)]=-4C_1<0,
	\end{equation*}
	and
	\begin{equation*}
		W(t)\le W(0)+ W'(0)t-2C_1t^2.
	\end{equation*}
	Then, there exists $T>0$ such that $W(T)=0$. This contradicts \eqref{W2} and we have $T^+<\infty$.
\end{proof}

\section{Key propositions for the scattering result}\label{keyprop}

In this section, we fix the indices $p$, $q$, $s$, $p_0$ as
\begin{equation}\label{pqr}
	p=\f{2\s}{\s-1}, \quad q=2\s, \quad s=\f{3(\s-2)}{2(\s-1)}, \quad p_0=(\f{1}{p}-\f{s}{3})^{-1}=2\s(\s-1).
\end{equation} 
As mentioned in Remark \ref{indad}, $(p,2q)$ and $(p,q)$ satisfy 1-dimensional and 2-dimensional admissible conditions, respectively. Using Sobolev's inequality, \eqref{eqH2}, Lemmas \ref{St1} and \ref{St2}, and Minkowski's inequality, we have
\begin{equation}\label{UVB}
	\begin{split}
		&\|U(t)V(\theta)\phi_0\|_{L_t^{2q} L_\theta^q L_x^{p_0}(\R\times [0,\f{\pi}{2}]\times\R^3)}\\
		&\lesssim  \|H^{\f{s}{2}}U(t)V(\theta)\phi_0\|_{L_t^{2q} L_\theta^q L_x^{p}(\R\times [0,\f{\pi}{2}]\times\R^3)} +   \||\partial_z|^{s}U(t)V(\theta)\phi_0\|_{L_t^{2q} L_\theta^q L_x^{p}(\R\times [0,\f{\pi}{2}]\times\R^3)}\\
		&\lesssim \|\phi_0\|_{B^s}.
	\end{split}
\end{equation}
In the following we frequently use this type of inequality.\\

Before proving the scattering part of Theorem \ref{main}, we present some important propositions.

\subsection{Scattering condition and wave operator} 

\begin{lemma}\label{Ssc}
	Let $2\le \s <4$, $\lmd=\pm 1$, and $\phi$ be the global solution to \eqref{NLS} with $\phi(0)=\phi_0\in B^1$. If $\phi$ satisfies $\|\phi\|_{L_t^\infty B^1(\R\times \R^3)}<\infty$ and
	\begin{equation}\label{ScC}
		\|V(\theta)\phi\|_{L_t^{2q}L_\theta^qL_x^{p_0}(\R\times [0,\f{\pi}{2}]\times \R^3)}<\infty,
	\end{equation}
	then $\phi$ scatters in $B^1$ both forward and backward. Moreover, there exists $\delta>0$ such that if $\|\phi_0\|_{B^1} \le \delta$, the corresponding solution $\phi$ scatters.
\end{lemma}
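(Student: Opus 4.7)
The plan is to reduce scattering to the Cauchy property: show that $U(-t)\phi(t)$ has a limit in $B^1$ as $t \to \pm\infty$. From the hypotheses I know that $\|\phi\|_{L_t^\infty B^1(\mathbb{R}\times\mathbb{R}^3)}$ and the scattering norm $\|V(\theta)\phi\|_{L_t^{2q}L_\theta^q L_x^{p_0}}$ are both finite. The key intermediate task is to upgrade these to the finiteness of the $B^1$-level Strichartz norms
\[
\|H^{1/2}V(\theta)\phi\|_{L_t^{2q}L_\theta^q L_x^p} + \||\partial_z|V(\theta)\phi\|_{L_t^{2q}L_\theta^q L_x^p} + \|H^{1/2}\phi\|_{L_t^4 L_z^\infty L_y^2}
\]
on all of $\mathbb{R}$; once this is in hand, the dual Strichartz estimate from Lemma \ref{St4} will deliver the Cauchy property automatically.

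To carry out the upgrade I would chop $\mathbb{R}$ into finitely many intervals $I_1,\ldots,I_N$ on which $\|V(\theta)\phi\|_{L_t^{2q}L_\theta^q L_x^{p_0}(I_j)} \le \eta$ for a small $\eta>0$, which is possible by the assumed finiteness of the scattering norm. On each $I_j$ with left endpoint $t_j$, write the Duhamel formula for $\phi$ starting from $\phi(t_j)$ and apply Proposition \ref{ExSt2} (with $X = H^{1/2}$ and $X = |\partial_z|$) and Lemma \ref{St4} (with the $1$-d admissible pair $(p_z,q_t)=(\infty,4)$ and $X = H^{1/2}$). The linear pieces are controlled by $\|\phi(t_j)\|_{B^1}\le \|\phi\|_{L_t^\infty B^1}$ via \eqref{UVB} and its $H^{1/2}$, $|\partial_z|$ analogues, while every nonlinear Duhamel contribution carries the favorable factor $\eta^{2\sigma-2}$. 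Setting
\[
M(I_j) := \|H^{1/2}V(\theta)\phi\|_{L_t^{2q}L_\theta^q L_x^p(I_j)} + \||\partial_z|V(\theta)\phi\|_{L_t^{2q}L_\theta^q L_x^p(I_j)} + \|H^{1/2}\phi\|_{L_t^4 L_z^\infty L_y^2(I_j)},
\]
this produces a closed bootstrap inequality of the schematic form $M(I_j) \le C\|\phi\|_{L_t^\infty B^1} + C\,M(I_j)^3\,\eta^{2\sigma-2}$. Choosing $\eta$ small enough relative to $\|\phi\|_{L_t^\infty B^1}$ and running a standard continuity argument gives $M(I_j) \lesssim \|\phi\|_{L_t^\infty B^1}$; summing over the $N$ intervals yields $M(\mathbb{R}) < \infty$.

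Having established global finiteness of $M$, for $0<t<t'$ I would apply Lemma \ref{St4} with the admissible pair $(p_z,q_t)=(2,\infty)$ and $X\in\{I_d,H^{1/2},|\partial_z|\}$ to the Duhamel integral over $(t,t')$. Together with the unitarity of $U(\cdot)$ on $B^1$, this gives
\[
\|U(-t')\phi(t') - U(-t)\phi(t)\|_{B^1} \lesssim M((t,t')) \cdot \|H^{1/2}\phi\|_{L_t^4 L_z^\infty L_y^2((t,t'))}^2 \cdot \|V(\theta)\phi\|_{L_t^{2q}L_\theta^q L_x^{p_0}((t,t'))}^{2\sigma-2},
\]
and each factor vanishes as $t,t'\to\infty$ by dominated convergence. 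Hence $U(-t)\phi(t)$ converges in $B^1$ to some $\phi^+$, so $\phi(t) - U(t)\phi^+ \to 0$ in $B^1$; the backward case is symmetric. For the small-data statement, I would run a contraction mapping on a small ball in the norm $M(\mathbb{R})$: the linear estimate \eqref{UVB} (and its $H^{1/2}$, $|\partial_z|$ variants) bounds the linear term by $C\delta$, and the nonlinear Strichartz estimate shows the Duhamel map is a contraction once $\delta$ is small, producing a global solution with $M(\mathbb{R}) \lesssim \delta$, which in particular satisfies the hypotheses of the first part.

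The main obstacle is the bootstrap step: tracking several anisotropic mixed norms simultaneously, arranging the Minkowski swaps so that the $t$, $\theta$, $y$, $z$ integrations line up with the exotic and standard Strichartz estimates, and checking that the smallness parameter $\eta$ appears with a strictly positive exponent in every nonlinear contribution. Once the bookkeeping embedded in Proposition \ref{ExSt2} and Lemma \ref{St4} is trusted, the remainder of the argument follows a fairly standard scattering template.
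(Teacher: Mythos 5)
Your proposal is correct and follows essentially the same route as the paper: the paper likewise upgrades \eqref{ScC} to global Strichartz-type bounds by the interval-splitting bootstrap from the proof of Proposition \ref{Ltp} (the argument giving \eqref{uS1}), then applies Lemma \ref{St4} to the Duhamel tail to obtain convergence of $U(-t)\phi(t)$ in $B^1$, and proves the small-data claim by a contraction mapping in Strichartz norms. The only difference is bookkeeping: the paper bootstraps the $S^1/\Sigma_0$-type norms of $\phi$ and passes to the $\theta$-averaged norms via \eqref{ScC2}, whereas you bootstrap the $V(\theta)$-weighted norms directly, which is equivalent.
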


\begin{proof}
	By the same argument used to justify \eqref{uS1} with reference to  \eqref{Ltp2} in Proposition \ref{Ltp}, \eqref{ScC} yields
	\[ \|\phi\|_{ L_t^\infty B^1\cap L_t^{4} \Sigma_0^{\infty, 2}(\R\times \R^3)}<\infty, \]
	where,  
	\[ \| u \|_{\Sigma_0^{p, 2}} : =  \| \nabla_x u\|_{L_z^{p}L_y^{2}(\R^3)} + \| \langle y \rangle u \|_{L_z^{p} L_y^{2}(\R^3)}, \quad 2< p\le \infty. \]
	Then, by Lemma \ref{St4}, it holds that
	\begin{equation*}
		\begin{split}
			\Big\| \int_t^\infty U(t-\tau)\Fav(\phi(\tau))d\tau \Big\|_{B^1} \lesssim&  \|\phi\|_{ L_t^\infty B^1\cap L_t^{4} \Sigma_0^{\infty, 2}}^{3}  \|V(\theta) \phi\|_{L_t^{2q} L_\theta^q L_x^{p_0}([t, \infty]\times [0,\f{\pi}{2}]\times \R^3)}^{2\s-2} \to 0 		
		\end{split}
	\end{equation*}
	as $t\to\infty$. Therefore, 
	\begin{equation*}
		\phi_+=\phi_0 -i\lmd\int_0^\infty U(-\tau)\Fav(\phi(\tau))d\tau
	\end{equation*}
	exhibits the desired properties. Similarly, we obtain $\phi_- \in B^1$.\\

	Let $\phi_0\in B^1$. We define the function space as
	\begin{equation*}
		\begin{split}
			X(\phi_0):=\{ \phi\in L_t^\infty B^1\cap L_t^{4} \Sigma_0^{\infty, 2}(\R \times \R^3) :\|\phi&\|_{ L_t^\infty B^1\cap L_t^{4} \Sigma_0^{\infty, 2}(\R\times \R^3)} \le 2 C_1\|\phi_0\|_{B^1},\\
			\|U(t)\phi\|_{ L_t^{2q} \Sigma_0^{p,2} \cap L_t^{4} \Sigma_0^{\infty, 2}(\R\times \R^3)} &\le 2 \|U(t)\phi_0\|_{ L_t^{2q} \Sigma_0^{p,2} \cap L_t^{4} \Sigma_0^{\infty, 2}(\R\times \R^3)} \},
		\end{split}
	\end{equation*}
	where $C_1$ is a constant such that it holds for any $\psi\in B^1$ that
	\begin{equation*}
		\|U(t)\psi\|_{ L_t^\infty B^1\cap L_t^{4} \Sigma_0^{\infty, 2}(\R\times \R^3)} \le C_1\|\psi\|_{B^1}.
	\end{equation*}
	We define the mapping
	\begin{equation}\label{Phi}
		\Phi[\phi](t):=U(t)\phi_0 -i\lmd\int_0^t U(t-\tau)\Fav(\phi(\tau))d\tau.
	\end{equation}
	By Lemma \ref{St4}, Minkowski's inequality, and Lemma \ref{St2}, we have
	\begin{equation}\label{Phi1}
		\begin{split}
			\|\Phi[\phi]\|_{ L_t^\infty B^1\cap L_t^{4} \Sigma_0^{\infty, 2}} &\le C_1 \|\phi_0\|_{B^1} + C \|\phi\|_{ L_t^{4} \Sigma_0^{\infty, 2}}^{2}  \| \phi\|_{L_t^{2q} \Sigma_0^{p,2}}^{2\s-1}\\
			&\le  C_1 \|\phi_0\|_{B^1} + C_2 \|U(t)\phi_0\|_{L_t^{2q} \Sigma_0^{p, 2}  \cap L_t^{4} \Sigma_0^{\infty, 2}}^{2\s+1}.
		\end{split}
	\end{equation}
	Particularly, in the second line, we use Sobolev's embedding, \eqref{eqH2}, Minkowski's inequality, and Lemma \ref{St2};
	\begin{equation}\label{ScC2}
		\begin{split}
			\|V(\theta) \phi\|_{L_t^{2q} L_\theta^q L_x^{p_0}} &\lesssim  \|H^{\f{1}{2}}V(\theta) \phi\|_{L_t^{2q} L_\theta^q L_x^{p}} +\|\partial_z V(\theta) \phi\|_{L_t^{2q} L_\theta^q L_x^{p}} \\
			&\lesssim \|H^{\f{1}{2}} \phi\|_{L_t^{2q} L_z^{p} L_y^2} +\|\partial_z \phi\|_{L_t^{2q} L_z^{p} L_y^2}\simeq\|\phi\|_{ L_t^{2q} \Sigma_0^{p, 2}}.
		\end{split}
	\end{equation}
	Similarly, we also have 
	\begin{equation}\label{Phi2}
		\begin{split}
			\|\Phi[\phi]\|_{L_t^{2q} \Sigma_0^{p, 2}  \cap L_t^{4} \Sigma_0^{\infty, 2}} &\le  \|U(t)\phi_0\|_{L_t^{2q} \Sigma_0^{p, 2}  \cap L_t^{4} \Sigma_0^{\infty, 2}}  + C_2 \|U(t)\phi_0\|_{L_t^{2q} \Sigma_0^{p, 2}  \cap L_t^{4} \Sigma_0^{\infty, 2}}^{2\s+1}.
		\end{split}
	\end{equation}
	Because we have
	\[ \|U(t)\phi_0\|_{L_t^{2q} \Sigma_0^{p, 2}  \cap L_t^{4} \Sigma_0^{\infty, 2}} \le C_1 \|\phi_0\|_{B^1} ,\]
	if we set $\delta>0$ to be sufficiently small, and $\|\phi_0\|_{B^1}\le \delta$ holds, 
	$\Phi$ is the mapping on $X(\phi_0)$. The contraction property, uniqueness, and continuity statements are easy consequences. From \eqref{ScC2} and the definition of $X(\phi_0)$, we obtain the
	space-time bound
	\begin{equation}
		\|V(\theta) \phi\|_{L_t^{2q} L_\theta^q L_x^{p_0}}  \lesssim \|\phi\|_{ L_t^\infty B^1\cap L_t^{4} \Sigma_0^{\infty, 2}} \lesssim  \|\phi_0\|_{B^1}<\infty.
	\end{equation}
	Therefore, $\phi$ scatters.
\end{proof}

\begin{lemma}\label{waveo}
	Let $2\le \s <4$, $\lmd=-1$, and $\psi\in B^1$ satisfy
	\begin{equation}
		\h\|\psi\|_{B^1}^2 +\h\|\psi\|_{L^2}^2 <d,
	\end{equation}
	where $d$ is given by \eqref{minp} and Lemma \ref{3.3}. Then, there exists a global solution $\phi(t)\in \cK^+ $to \eqref{NLS}, such that
	\begin{equation}\label{wave}
		\lim_{t\to\infty}\|U(-t)\phi(t)-\psi\|_{B^1}=0.
	\end{equation}
\end{lemma}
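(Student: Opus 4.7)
The plan is to construct the solution first on a half-line $[T, \infty)$ by solving the Duhamel integral equation at infinity, then to extend backward via local well-posedness, and finally to verify the $\cK^+$ condition. For the construction at infinity I would apply Banach's fixed-point theorem to
\[
\Phi[\phi](t) := U(t)\psi - i\lambda \int_t^\infty U(t-\tau)\Fav(\phi(\tau))\,d\tau
\]
on a small ball inside the Strichartz space modeled on $L^\infty_t B^1 \cap L^{2q}_t \Sigma_0^{p,2} \cap L^4_t \Sigma_0^{\infty,2}$ over $[T,\infty)\times \R^3$, mirroring the small-data scheme of Lemma \ref{Ssc} with $U(t)\psi$ playing the role of the free data. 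By \eqref{UVB} and its $\Sigma_0^{p,2}$ analogue, the Strichartz norms of $U(t)\psi$ restricted to $[T,\infty)$ tend to $0$ as $T\to\infty$, so for $T$ large enough the nonlinear estimate of Lemma \ref{St4} makes $\Phi$ a contraction, producing a unique $\phi\in C([T,\infty); B^1)$ solving \eqref{NLS}.

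From this fixed point, the identity
\[
\|U(-t)\phi(t)-\psi\|_{B^1} = \Big\|\int_t^\infty U(-\tau)\Fav(\phi(\tau))\,d\tau\Big\|_{B^1}
\]
combined with another application of Lemma \ref{St4} on $[t,\infty)$ yields \eqref{wave}. To extend $\phi$ to all of $\R$ I would invoke Proposition \ref{wp}, which yields a backward $B^1$ solution starting from $\phi(T)\in B^1$; patching produces a maximal $B^1$ solution on some interval $(T_-, \infty)$.

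The main obstacle is to show $\phi(t)\in \cK^+$. Since $U(t)$ is a $B^1$-isometry and $\phi(t)-U(t)\psi \to 0$ in $B^1$, we have $\|\phi(t)\|_{B^1}\to \|\psi\|_{B^1}$ and $M[\phi(t)]\to M[\psi]$, while Lemma \ref{bdnl} yields continuity of the potential energy under $B^1$ perturbation. The delicate step is
\[
\|V(\theta)U(t)\psi\|_{L^{2\s+2}_{\theta,x}} \longrightarrow 0 \qquad \text{as}\quad t\to\infty,
\]
which I would obtain by applying the one-dimensional dispersive estimate $\|e^{it\partial_z^2}g\|_{L^{2\s+2}_z}\lesssim |t|^{-\s/(2\s+2)}\|g\|_{L^{(2\s+2)/(2\s+1)}_z}$ slicewise in $(y,\theta)$ to Schwartz data (using $[V(\theta), U(t)]=0$), and then extending to $\psi\in B^1$ by density together with the uniform control from Lemma \ref{bdnl}. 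It follows that $S[\phi(t)]\to \h\|\psi\|_{B^1}^2 + M[\psi] < d$ and $P[\phi(t)]\to 2\|\partial_z\psi\|_{L^2}^2 \ge 0$, so $\phi(t_0)\in \cK^+$ for some large $t_0$. Since $S$ is conserved and $P[\phi(\cdot)]$ is continuous on the existence interval, with $\{P\ge 0\}$ closed and $\{P<0\}$ open inside $\{S<d\}$, connectedness forces $\phi(t)\in\cK^+$ for every $t\in (T_-,\infty)$, and Lemma \ref{3.5} then provides a uniform $B^1$ bound that forces $T_-=-\infty$.
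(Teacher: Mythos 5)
Your proposal is correct and follows essentially the same route as the paper: a fixed-point construction of the wave operator on $[T,\infty)$ using the smallness of the tails of the global Strichartz norms of $U(t)\psi$, decay of the averaged potential energy $\|V(\theta)U(t)\psi\|_{L^{2\sigma+2}_{\theta,x}}$ via density plus one-dimensional dispersion, and then conservation of $S$ together with the variational invariance of $\cK^+$ and Lemma \ref{3.5} to get global existence. The only cosmetic difference is that you apply the $L_z^{(2\sigma+2)'}\to L_z^{2\sigma+2}$ dispersive estimate slicewise, while the paper interpolates between the $L^4$ and $L^{10}$ bounds and uses the $|t|^{-1/2}$ decay of the $L^6_x$ norm — the same mechanism.
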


\begin{proof}
	Let $T>0$. We define the function space
	\begin{equation*}
		\begin{split}
			X(\psi,T):=\{ \phi\in& L_t^\infty B^1\cap L_t^{4} \Sigma_0^{\infty, 2}([T, \infty)\times \R^3) : \|\phi\|_{ L_t^\infty B^1\cap L_t^{4} \Sigma_0^{\infty, 2}([T, \infty)\times \R^3)} \le 2 C_1\|\psi\|_{B^1},\\
			&\|V(\theta)\phi\|_{L_t^{2q}L_{\theta}^{q}L_x^{p_0}([T,\infty)\times [0,\f{\pi}{2}]\times \R^3)}\le \|U(t)V(\theta)\psi\|_{L_t^{2q}L_{\theta}^{q}L_x^{p_0}([T,\infty)\times [0,\f{\pi}{2}]\times \R^3)}\},
		\end{split}
	\end{equation*}
	and the mapping \eqref{Phi}. Using the same estimates as \eqref{Phi1} and \eqref{Phi2}, and choosing $T$ sufficiently large depending on $\psi$, we can construct the solution $\phi$ to \eqref{NLS}, satisfying \eqref{wave} on $X(\psi, T)$.\\
	
	Next, we prove
	\begin{equation}\label{pelimit}
		\lim_{t\to \infty}\|V(\theta)U(t)\psi\|_{L_{\theta,x}^{2\s+2}}=0.
	\end{equation}
	For any $\ep>0$, there exists $f_\ep\in C_0^\infty(\R^3)$ such that $\|f_\ep-\psi\|_{B^1}\le \ep$. Then, by applying the triangle inequality and Lemma \ref{bdnl}, 
	\begin{equation*}
		\begin{split}
			\|V(\theta)U(t)\psi\|_{L_{\theta,x}^{2\s+2}} &\lesssim \|f_\ep-\psi\|_{B^1} + \|V(\theta)U(t)f_\ep \|_{L_{\theta,x}^{2\s+2}}\\
			&\le \ep  + \|V(\theta)U(t)f_\ep \|_{L_{\theta,x}^{2\s+2}}.
		\end{split}
	\end{equation*}
	We use interpolation and equations \eqref{ieqnl-0}–\eqref{ieqnl-3} to obtain
	\begin{equation*}
		\begin{split}
			\|V(\theta)U(t)f_\ep \|_{L_{\theta,x}^{2\s+2}}&\lesssim	\|V(\theta)U(t)f_\ep \|_{L_{\theta,x}^{4}}^{\f{2(4-\s)}{3(\s+1)}}	\|V(\theta)U(t)f_\ep \|_{L_{\theta,x}^{10}}^{\f{5(\s-1)}{3(\s+1)}}\\
			&\lesssim \| f_\ep\|_{B^1}^{\f{2}{\s+1}} \|V(\theta)U(t)f_\ep\|_{L_\theta^\infty L_x^6}^{\f{\s-1}{\s+1}}.
		\end{split}
	\end{equation*}
	From the dispersive estimate in $z$, Minkowski, and Sobolev's embedding in $y$, we have
	\begin{equation*}
		\begin{split}
			\|U(t)V(\theta)f_\ep\|_{L_\theta^\infty L_x^6} 
			&\lesssim |t|^{-\h} \|V(\theta)f_\ep\|_{L_\theta^\infty L_y^6 L_z^{\f{6}{5}}}\\
			&\lesssim     |t|^{-\h} \|H^\h V(\theta)f_\ep\|_{L_\theta^\infty  L_z^{\f{6}{5}}L_y^2}\\ 
			&\lesssim     |t|^{-\h} \|H^\h f_\ep\|_{  L_z^{\f{6}{5}}L_y^2} \to 0 \quad \text{as}\quad t\to \infty.
		\end{split}
	\end{equation*}
	Therefore, $\limsup_{t\to\infty}	\|V(\theta)U(t)\psi\|_{L_{\theta,x}^{2\s+2}}\le \ep$ holds true. By letting $\ep\to+0$, we obtain \eqref{pelimit}.\\
	From \eqref{pelimit}, it holds
	\begin{equation*}
		\lim_{t\to\infty}S[\phi(t)]= 	\lim_{t\to\infty}S[U(t)\psi]= \h\|\psi\|_{B^1}^2 +\h\|\psi\|_{L^2}^2 <d,
	\end{equation*}
	and $\phi(t)\in \cK^+$ for a large $t$. Hence, $\phi$ always belongs to $\cK^+$  and exists globally.
\end{proof}

\subsection{Linear profile decomposition}

In this subsection, we prove the linear profile decomposition. The reader can refer to \cite{Gdb, ECQHO}.

\begin{lemma}[Inverse Strichartz estimate]\label{ISt}
	Let $2<\s<4$, and $\{\phi_n\}_{n=1}^\infty \subset B^1$ satisfy
	\[ 0< \delta\le \|U(t)V(\theta)\phi_n\|_{L_t^{2q}L_\theta^q L_x^{p_0}(\R\times [0,\f{\pi}{2}]\times \R^3)} \lesssim \|\phi_n\|_{B^1}\le A<\infty \] 
	for some $\delta, A>0$. Then, up to a subsequence, there exists $\psi^*\in B^1$ and $\{(t_n, z_n)\}_{n=1}^{\infty}\in \R\times \R$ such that the following statements hold:
	
	\begin{equation}\label{IStwc}
		U(-t_n)\phi_n(\cdot, \cdot+z_n) \rightharpoonup \psi^* \quad \text{weakly in} \quad B^1.
	\end{equation}
	The sequence $\{ \psi_n^*:=U(t_n)\psi^*(\cdot, \cdot-z_n)\}_{n=1}^\infty$ satisfies
	\begin{equation}\label{ISt1}
		\|\psi_n^*\|_{B^1} =  \|\psi^*\|_{B^1} \gtrsim \delta^{\f{1}{1-\al}}A^{-\f{\al}{1-\al}}, \qquad \al=\max\{\f{1}{\s-1}, \f{3}{2\s} \}.
	\end{equation}
	\begin{equation}\label{ISt2}
		\lim_{n\to\infty} \big[ \|\phi_n\|_{B^1}^2-\|\phi_n-\psi^*_n\|_{B^1}^2-\|\psi^*_n\|_{B^1}^2\big]=0.
	\end{equation}
	\begin{equation}\label{ISt3}
		\lim_{n\to \infty} \big[\|V(\theta)\phi_n\|_{L_{\theta,x}^{2\s+2}}^{2\s+2}-\|V(\theta)(\phi_n-\psi^*_n)\|_{L_{\theta,x}^{2\s+2}}^{2\s+2}-\|V(\theta)\psi^*_n\|_{L_{\theta,x}^{2\s+2}}^{2\s+2}\big]=0.
	\end{equation}
	Moreover, $\{t_n\}_{n=1}^\infty$ satisfies $t_n \equiv 0$ or $t_n\to \pm \infty$ as $n\to \infty$.
\end{lemma}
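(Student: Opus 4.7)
The plan is to follow the standard refined Strichartz approach, adapted to the averaged setting and to the anisotropic function space $B^1$. The cornerstone is an interpolation inequality of the form
\begin{equation*}
\|U(t)V(\theta)f\|_{L_t^{2q}L_\theta^q L_x^{p_0}} \lesssim \|f\|_{B^1}^{\al}\,\cN(f)^{1-\al},
\end{equation*}
where $\cN(f)$ is a weak Besov-type norm on $U(t)V(\theta)f$ that is controlled by dual pairings against localized bumps, and $\al=\max\{1/(\s-1),3/(2\s)\}$. Two distinct interpolations contribute the two exponents in the maximum: one exchanges spatial integrability for a weak norm via Sobolev embedding in $\R^3$ (giving $3/(2\s)$), and the other uses time/angle interpolation via the ordinary Strichartz bounds of Lemmas \ref{St1}--\ref{St2} at the endpoint regularity (giving $1/(\s-1)$); one keeps whichever yields the smaller exponent on the weak norm, which amounts to the maximum of the two, and the branch that wins depends on whether $\s<3$ or $\s>3$.

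Applying this inequality with the hypotheses $\delta\le\|U(t)V(\theta)\phi_n\|_{L_t^{2q}L_\theta^qL_x^{p_0}}$ and $\|\phi_n\|_{B^1}\le A$ yields $\cN(\phi_n)\gtrsim \delta^{1/(1-\al)}A^{-\al/(1-\al)}$. By the definition of $\cN$, there exist $(t_n,\theta_n,y_n,z_n)\in\R\times[0,\f{\pi}{2}]\times\R^3$ and a fixed test function $g$ such that $|\langle U(t_n)V(\theta_n)\phi_n,g(\cdot-(y_n,z_n))\rangle|$ is of at least this size. Because $B^1$ is invariant under $U(t)$, under $V(\theta)$ (in the equivalent norm from \eqref{eqH2}), and under translations in $z$, but the weight $|y|^2$ precludes translation in $y$ and forces $y_n$ to remain bounded (otherwise $\|\phi_n\|_{B^1}\to\infty$), we introduce only the shifts $(t_n,z_n)$: set $\psi_n:=U(-t_n)\phi_n(\cdot,\cdot+z_n)$, which remains bounded in $B^1$, and extract a subsequence with $\psi_n\rightharpoonup\psi^*$ weakly in $B^1$. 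Passing to further subsequences in $y_n$ and $\theta_n$ (which live in compact sets) and using weak convergence to test against $V(\theta_n)g(\cdot-(y_n,0))$ transfers the pointwise largeness to the lower bound \eqref{ISt1}.

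The Pythagorean identity \eqref{ISt2} is immediate upon expanding $\|\psi_n\|_{B^1}^2=\|\psi_n-\psi^*\|_{B^1}^2+\|\psi^*\|_{B^1}^2+2\,\mathrm{Re}\,\langle\psi_n-\psi^*,\psi^*\rangle_{B^1}$ and noting that the cross term vanishes under weak convergence, while $\|\phi_n\|_{B^1}=\|\psi_n\|_{B^1}$ and $\|\psi_n^*\|_{B^1}=\|\psi^*\|_{B^1}$ by isometry. The potential-energy decoupling \eqref{ISt3} follows from Brezis--Lieb once one has a.e. convergence of $V(\theta)\psi_n$ to $V(\theta)\psi^*$ on $[0,\f{\pi}{2}]\times\R^3$; this is obtained by the $H_\theta^{1/2}$ argument employed in Step 4 of the proof of Lemma \ref{Uab}: cut $\theta$ by a bump $\vphi$, estimate $\||\partial_\theta|^{\h}\vphi V(\theta)\psi_n\|_{L^2_{\theta,x}}\lesssim \|\psi_n\|_{B^1}$ using $i\partial_\theta V(\theta)=HV(\theta)$, then invoke Rellich--Kondrachov and a diagonal subsequence. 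Finally, the dichotomy for $\{t_n\}$ is obtained by one more subsequence: if $t_n$ stays bounded, then $t_n\to t_0$ and we replace $\psi^*$ by $U(-t_0)\psi^*$, which amounts to redefining $t_n\equiv 0$; otherwise $t_n\to\pm\infty$. The principal obstacle is the refined Strichartz inequality of the first paragraph: distributing a weak-norm gain across the four integration variables $t$, $\theta$, $y$, and $z$ while respecting the $\theta$-averaging in the averaged nonlinearity is delicate, and the two-branch form of $\al$ reflects that neither natural interpolation is uniformly optimal on the whole range $2<\s<4$.
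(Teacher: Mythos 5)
Your outline reproduces the soft parts of the argument (weak-limit extraction, Pythagorean expansion of the $B^1$ norm, Brezis--Lieb via the $|\partial_\theta|^{1/2}$ bound, the subsequence trichotomy for $t_n$), but the quantitative heart of the lemma is missing. You posit an interpolation inequality $\|U(t)V(\theta)f\|_{L_t^{2q}L_\theta^qL_x^{p_0}}\lesssim\|f\|_{B^1}^{\al}\,\cN(f)^{1-\al}$ with an unspecified ``weak Besov-type norm'' $\cN$, and you yourself flag it as the principal obstacle; nothing in the proposal defines $\cN$, proves the inequality, or explains how largeness of $\cN(\phi_n)$ produces a concentration point $(t_n,\theta_n,y_n,z_n)$ compatible with the symmetries actually available (time and $z$-translations only). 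The paper does this concretely: it introduces $P_{\le N}=\vphi(H/N^2)\vphi(-\partial_z^2/N^2)$, uses the regularity gap $s<1$ to make the high-frequency tail $\lesssim N^{-(1-s)}A$ so that $N$ can be fixed in terms of $A/\delta$, interpolates the localized evolution against $L^\infty_{t,\theta,x}$ (the case split $2<\s\le3$ versus $3<\s<4$ is exactly where $\al=\max\{\f{1}{\s-1},\f{3}{2\s}\}$ comes from), and then uses the Gaussian pointwise bound $|P_{\le N}U(t)V(\theta)\phi_n(x)|\lesssim N^{3/2}e^{-c|y|^2/N^2}\|\phi_n\|_{L^2}$ together with Bernstein in $z$ to force the near-supremum to occur at $|y_n|\le R$. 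Your substitute reasoning for the $y$-localization --- that large $y_n$ would force $\|\phi_n\|_{B^1}\to\infty$ --- is not an argument: the $\phi_n$ have bounded $B^1$ norm by hypothesis, and what must be shown is that a concentration of the fixed size $\delta^{1/(1-\al)}A^{-\al/(1-\al)}$ cannot occur at arbitrarily large $|y|$; that is precisely what the weighted/Gaussian decay estimate provides. Without these steps, \eqref{ISt1} is unproven, and the Rellich--Kondrachov step has no pointwise largeness to pass to the limit (note also that it is applied to $P_{\le N}\psi$, using $P_{\le N}w_n\in B^2\hookrightarrow C\cap L^\infty$, not to $\psi$ directly).

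A second, smaller gap: your claim that \eqref{ISt3} ``follows from Brezis--Lieb once one has a.e.\ convergence of $V(\theta)\psi_n$ to $V(\theta)\psi^*$'' only works when $t_n\equiv0$. When $|t_n|\to\infty$, the profile $\psi_n^*=U(t_n)\psi^*(\cdot,\cdot-z_n)$ is not a fixed function, and $U(t_n)$ is not an isometry of $L^{2\s+2}_{\theta,x}$, so a.e.\ convergence in the moving frame does not decouple the quantities appearing in \eqref{ISt3}. The paper handles that case differently: it shows $\|V(\theta)\psi_n^*\|_{L^{2\s+2}_{\theta,x}}\to0$ by the dispersive-decay argument of \eqref{pelimit}, and then \eqref{ISt3} follows from the uniform $B^1$ bounds. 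You need this extra argument (or an equivalent) for the unbounded-$t_n$ branch.
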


\begin{proof}
	Let $N\in 2^{\N}$ and $\vphi\in C_0^\infty(\R^d)$ be radially symmetric functions supported in $\{\xi\in \R^3; |\xi|\le2\}$ and $\vphi(\xi)=1$ for $|\xi|\le 1$. We define the operator
	\[ P_{\le N} = \vphi\Big(\f{H}{N^2}\Big)\vphi\Big(\f{-\partial_z^2}{N^2}\Big). \]
	$\vphi\Big(\f{H}{N^2}\Big)$ is the spectral cut-off with respect to the harmonic oscillator $H$, and $\vphi\Big(\f{-\partial_z^2}{N^2}\Big)$ is the Fourier cut-off in $z$. Note that $P_{\le N}$ commutes with both $e^{-itH}$ and $e^{it\partial_z^2}$. \\
	
	Generally, for any $\psi\in B^1$ and $s'\in [0,1)$, we have from \eqref{eqD},
	\begin{equation*}
		\|\psi-P_{\le N}\psi\|_{B^{s'}}\lesssim N^{-(1-s')}\|\psi\|_{B^1}.
	\end{equation*} 
	From \eqref{UVB} and the above estimate, we obtain
	\begin{equation*}
		\|U(t)V(\theta)(1-P_{\le N})\phi_n )\|_{L_t^{2q}L_\theta^qL_x^{p_0}} \lesssim N^{-(1-s)}\|\phi_n\|_{B^1} \le  N^{-(1-s)}A.
	\end{equation*}
	Thus, if $N\gtrsim (A/\delta)^{\f{1}{1-s}}$, then we have
	\begin{equation*}
		\begin{split}
			\|U(t)V(\theta)P_{\le N}\phi_n \|_{L_t^{2q}L_\theta^qL_x^{p_0}}&\ge 	  \|U(t)V(\theta)\phi_n \|_{L_t^{2q}L_\theta^qL_x^{p_0}} -\|U(t)V(\theta)(1-P_{\le N})\phi_n )\|_{L_t^{2q}L_\theta^qL_x^{p_0}}\\
			&\gtrsim \delta. 
		\end{split}
	\end{equation*}
	By interpolation, for $\al=\max\{\f{1}{\s-1}, \f{3}{2\s} \}$, we obtain
	\begin{equation}
		\begin{split}
			\delta&\lesssim \|U(t)V(\theta)P_{\le N}\phi_n \|_{L_t^{2q}L_\theta^qL_x^{p_0}} \\
			&\lesssim  \|U(t)V(\theta)P_{\le N}\phi_n \|_{L_t^{\infty}L_\theta^\infty L_x^{\infty}}^{1-\al}   \|U(t)V(\theta)P_{\le N}\phi_n \|_{L_t^{2\al q}L_\theta^{\al q}L_x^{\al p_0}}^{\al} \\
			&\lesssim \|U(t)V(\theta)P_{\le N}\phi_n \|_{L_t^{\infty}L_\theta^\infty L_x^{\infty}}^{1-\al} A^{\al}.
		\end{split}
	\end{equation}
	In the last line, we divide into the cases $2<\s\le 3$ and $3<\s<4$. When $2<\s\le 3$, $p_0 \le 2q$ holds. Thus, by Minkowski's inequality and Lemmas \ref{St1} and \ref{St2}, we have
	\[  \|U(t)V(\theta)P_{\le N}\phi_n \|_{L_t^{2\al q}L_\theta^{\al q}L_x^{\al p_0}} \lesssim \|U(t)V(\theta)P_{\le N}\phi_n \|_{L_\theta^{\al q}L_y^{\al p_0} L_z^2} \lesssim \|\phi_n\|_{L^2}.   \]
	When $3<\s<4$, after applying Sobolev's embedding and \eqref{eqH2}, we perform the same estimate as above.
	\begin{equation*}
		\begin{split}
			&\|U(t)V(\theta)P_{\le N}\phi_n \|_{L_t^{2\al q}L_\theta^{\al q}L_x^{\al p_0}}\\ &\lesssim   \|H^{\f{\s-3}{4(\s-1)}}U(t)V(\theta)P_{\le N}\phi_n \|_{L_t^{2\al q}L_\theta^{\al q}L_x^{2\al q}}+ \||\partial_z|^{\f{\s-3}{2(\s-1)}}U(t)V(\theta)P_{\le N}\phi_n \|_{L_t^{2\al q}L_\theta^{\al q}L_x^{2\al q}} \\
			&\lesssim \|\phi_n\|_{B^{\f{\s-3}{2(\s-1)}}}.
		\end{split}
	\end{equation*} 
	Therefore, if $N\gtrsim (A/\delta)^{\f{1}{1-s}}$, then we obtain
	\begin{equation*}
		\|U(t)V(\theta)P_{\le N}\phi_n \|_{L_t^{\infty}L_\theta^\infty L_x^{\infty}} \gtrsim \delta^{\f{1}{1-\al}}A^{-\f{\al}{1-\al}}.
	\end{equation*}
	Applying \cite[Lemmas 3.1 and 3.2]{Random} to the $y$-direction, there exists $c>0$ independent of $\phi_n$ and $t$ such that for all $x\in \R^3$, it holds that
	\begin{equation}
		|P_{\le N}U(t)V(\theta)\phi_n(x)|\lesssim N e^{-c\f{|y|^2}{N^2}}\|P_{\le N}U(t)\phi_n(\cdot,z)\|_{L_y^2}.
	\end{equation}
	Using Bernstein's estimate in $z$, we obtain
	\begin{equation}
		\|P_{\le N}U(t)\phi_n\|_{L_y^2L_z^\infty} \lesssim N^\h	\|P_{\le N}U(t)\phi_n\|_{L_x^2}\le N^\h \|\phi_n\|_{L^2},
	\end{equation}
	and so, 
	\begin{equation}\label{smooth}
		|P_{\le N}U(t)V(\theta)\phi_n(x)|\lesssim N^{\f{3}{2}} e^{-c\f{|y|^2}{N^2}}\|\phi_n\|_{L^2}\le  N^{\f{3}{2}} e^{-c\f{|y|^2}{N^2}}A.
	\end{equation}
	If $R$ is sufficiently large such that 
	\begin{equation}\label{R}
		N^{\f{3}{2}}e^{-c\f{R^2}{N^2}}\le \min\Big\{\f{1}{C}\delta^{\f{1}{1-\al}}A^{-\f{\al}{1-\al}-1},1\Big\},
	\end{equation}	
	($R$ depends only on $\sigma$, $\delta$, and $A$) we obtain
	\begin{equation*}
		\begin{split}
			&\|P_{\le N}U(t)V(\theta)\phi_n \|_{L_{t, \theta, x}^{\infty}(\{|y|\le R\})}\\
			\ge& 	  \|P_{\le N}U(t)V(\theta)\phi_n \|_{L_{t, \theta, x}^{\infty}} -\|P_{\le N}U(t)V(\theta)\phi_n )\|_{L_{t, \theta, x}^{\infty}(\{|y|\ge R\})}\\
			\gtrsim &\delta^{\f{1}{1-\al}}A^{-\f{\al}{1-\al}}- N^{\f{3}{2}} e^{-c\f{R^2}{N^2}}A\\
			\gtrsim &\delta^{\f{1}{1-\al}}A^{-\f{\al}{1-\al}}. 
		\end{split}
	\end{equation*}
	Then, there exist $t_n\in \R$, $\theta_n\in [0,\f{\pi}{2}]$, $y_n\in\{|y|\le R\}$, and $z_n\in \R$, such that
	\begin{equation}\label{nonzero}
		|P_{\le N}U(-t_n)V(\theta_n)\phi_n(y_n, z_n)|\gtrsim \delta^{\f{1}{1-\al}}A^{-\f{\al}{1-\al}}.
	\end{equation}
	Since $|y_n|\le R$ and $\theta_n\in [0,\f{\pi}{2}]$, after passing to a subsequence, $y_n \to \exists y_\infty$  and $\theta_n \to \exists \theta_\infty$ as $n\to \infty$. In addition, for $\{t_n\}_{n=1}^\infty$, we reduce this to the cases $\sup_{n\in \N}| t_n|<\infty$ or $\lim_{n\to \infty} |t_n|=\infty$. \\
	
	First, we consider the case  $\lim_{n\to \infty} |t_n|=\infty$. Let 
	\[ w_n(x):=U(-t_n)V(\theta_n)\phi_n(y, z+z_n). \]
	Then, $\{w_n\}_{n=1}^\infty$ is bounded in $B^1$, and after passing to a subsequence, there exists $\psi \in B^1$ such that $w_n \rightharpoonup \psi$ in $B^1$ as $n\to \infty$ and $\|\psi\|_{B^1}\le A$. Because 
	\[ P_{\le N}w_n\in B^2 \hookrightarrow C\cap L^\infty (\R^3), \]
	Rellich–Kondrachov's theorem and \eqref{nonzero} show that:
	\begin{equation*}
		|P_{\le N}\psi(y_\infty, 0)|\gtrsim \delta^{\f{1}{1-\al}}A^{-\f{\al}{1-\al}}.
	\end{equation*}
	Using the same estimate as in \eqref{smooth} and \eqref{R}, we obtain
	\[ \|\psi\|_{L^2}\ge \|P_{\le N}\psi\|_{L^2} \gtrsim |P_{\le N}\psi(y_\infty, 0)|\gtrsim \delta^{\f{1}{1-\al}}A^{-\f{\al}{1-\al}}>0 .\]
	Let $\psi*:=V(-\theta_\infty)\psi$, $\psi_n^*:=U(t_n)\psi^*(\cdot, \cdot-z_n)$, and  $r_n(x):=\phi_n(x)- \psi_n^*$, it holds
	\begin{equation*}
		U(-t_n)r_n(\cdot, \cdot+z_n) \rightharpoonup 0 \quad \text{ weakly in } \quad B^1,  
	\end{equation*}
	\begin{equation*}
		\begin{split}
			\|\phi_n\|_{B^1}^2-\|\psi_n^*\|_{B^1}^2-\|r_n\|_{B^1}^2 \to 0, \quad \text{as}\quad  n\to\infty.
		\end{split}	
	\end{equation*}
	Moreover, we apply the same argument as \eqref{pelimit} to  $\{\psi_n^*\}_{n=1}^\infty$ and obtain
	\[\lim_{n\to\infty}	\|V(\theta)\psi^*_n\|_{L_{\theta,x}^{2\s+2}}=0. \]
	By combining this with the boundedness of $\phi_n$ and $\psi^*_n$ in $B^1$ with respect to $n$, we obtain \eqref{ISt3}.\\
	
	Next, we consider the case $\{t_n\}_{n=1}^\infty$ is bounded. Then, by passing to a subsequence, $t_n \to \exists t_\infty$ as $n\to \infty$. By iterating the argument in the case $\lim_n |t_n|=\infty$, we obtain $\psi^*\in B^1$ such that
	\[ w_n= U(-t_n)V(\theta_n)\phi_n(\cdot, \cdot+z_n) \rightharpoonup \psi^* \quad \text{ weakly in} \quad B^1.\]
	Then it holds 
	\[  \tilde{w}_n:=\phi_n(\cdot, \cdot+z_n) \rightharpoonup \tilde{\psi^*}:= U(t_\infty)V(-\theta_\infty)\psi^* \quad \text{ weakly in} \quad B^1, \]
	and we may set $t_n\equiv 0$. Finally, we prove \eqref{ISt3} holds. It is sufficient to show
	\begin{equation*}
		\lim_{n\to\infty}\|V(\theta)\tilde{w}_n\|_{L_{\theta,x}^{2\s+2}}^{2\s+2}-\|V(\theta)(\tilde{w}_n-\tilde{\psi^*})\|_{L_{\theta,x}^{2\s+2}}^{2\s+2}-\|V(\theta)\tilde{\psi^*}\|_{L_{\theta,x}^{2\s+2}}^{2\s+2}=0.
	\end{equation*}
	This follows from Brezis-Lieb's lemma. Refer to the proof of \eqref{BL}.
\end{proof}
\quad

From Lemma \ref{ISt}, we obtain the following profile decomposition:

\begin{prop}[Linear profile decomposition]\label{Lpd}
	Let $2<\s<4$ and $\{\phi_n\}_{n=1}^\infty$ be a uniformly bounded sequence in $B^1$. Then, up to a subsequence, there exist $J^*\in \{0,1,2,\cdots\}\cup \{\infty \}$, $\{\psi^j\}_{j=1}^{J^*} \subset B^1$, and  $\{\{(t_n^j, z_n^j)\}_{n=1}^\infty\}_{j=1}^{J^*}\subset \R^2$ such that the following decomposition holds for any finite $J\le J^*$;
	\begin{equation}
		\phi_n(x)=\sum_{j=1}^J U(t_n^j)\psi^j(y,z-z_n^j) +r_n^J(x).
	\end{equation}
	Moreover, we obtain the following properties as $n\to\infty$ for each $J$:
	\begin{equation}\label{LPDwc}
		U(-t_n^J)r_n^J(z+z_n^J) \rightharpoonup 0 \quad \text{ weakly in} \quad B^1,
	\end{equation}
	\begin{equation}\label{LPDo1}
		\|\phi_n\|_{L^2}^2 = \sum_{j=1}^J\|U(t_n^j)\psi^j\|_{L^2}^2 +\|r_n^J\|_{L^2}^2 +o_n(1),
	\end{equation}
	
	\begin{equation}\label{LPDo2}
		\|\phi_n\|_{B^1}^2 = \sum_{j=1}^J\|U(t_n^j)\psi^j\|_{B^1}^2 +\|r_n^J\|_{B^1}^2 +o_n(1),
	\end{equation}
	
	\begin{equation}\label{LPDo3}
		\|V(\theta)\phi_n\|_{L_{\theta,x}^{2\s+2}}^{2\s+2} = \sum_{j=1}^J\|V(\theta)U(t_n^j)\psi^j\|_{L_{\theta,x}^{2\s+2}}^{2\s+2} +\|V(\theta)r_n^J\|_{L_{\theta,x}^{2\s+2}}^{2\s+2} +o_n(1).
	\end{equation}
	In particular, for any finite $J\le J^*$, it hold that
	\begin{equation}
		S[\phi_n] = \sum_{j=1}^JS[U(t_n^j)\psi^j] +S[r_n^J] +o_n(1),
	\end{equation}
	\begin{equation}
		I[\phi_n] = \sum_{j=1}^JI[U(t_n^j)\psi^j] +I[r_n^J] +o_n(1).
	\end{equation}
	Finally, we have
	\begin{equation}\label{LPDo4}
		\lim_{n\to\infty}|t_n^j-t_n^k|+|z_n^j-z_n^k|=\infty, \quad \text{for any} \quad j\neq k,
	\end{equation}
	\begin{equation}\label{LPDrem}
		\lim_{J\to J^*} \limsup_{n\to \infty}\|U(t)V(\theta)r_n^J\|_{L_t^{2q}L_\theta^{q}L_x^{p_0}(\R\times [0,\f{\pi}{2}]\times \R^3)} =0.
	\end{equation}

\end{prop}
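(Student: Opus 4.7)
The plan is to build the decomposition iteratively by repeatedly applying the Inverse Strichartz estimate (Lemma \ref{ISt}). Set $r_n^0 := \phi_n$ and inductively define
\[
\eta_J := \limsup_{n\to\infty}\|U(t)V(\theta)r_n^J\|_{L_t^{2q}L_\theta^q L_x^{p_0}},
\qquad
A_J := \limsup_{n\to\infty}\|r_n^J\|_{B^1}.
\]
If $\eta_J=0$, we terminate and set $J^*=J$. Otherwise, along a subsequence, $\|U(t)V(\theta)r_n^J\|_{L_t^{2q}L_\theta^q L_x^{p_0}}\ge \eta_J/2$, so Lemma \ref{ISt} applied to $\{r_n^J\}$ produces a profile $\psi^{J+1}\in B^1$, parameters $(t_n^{J+1},z_n^{J+1})\in\R^2$ with $t_n^{J+1}\equiv0$ or $|t_n^{J+1}|\to\infty$, and the remainder $r_n^{J+1}(x) := r_n^J(x) - U(t_n^{J+1})\psi^{J+1}(y,z-z_n^{J+1})$ satisfying \eqref{IStwc} and the two-term Pythagorean identities \eqref{ISt1}--\eqref{ISt3}. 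A standard diagonal extraction produces a single subsequence along which this works for every $J$.

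The next step is to prove the asymptotic orthogonality of the parameters \eqref{LPDo4}. Suppose by contradiction that for some $j<k$ the sequence $(t_n^k-t_n^j, z_n^k-z_n^j)$ has a bounded subsequence and hence (after extraction) a limit $(t_\infty,z_\infty)$. Starting from the weak convergence $U(-t_n^j)r_n^j(\cdot,\cdot+z_n^j)\rightharpoonup 0$, unitarity of $U$ and space translation gives
\[
U(-t_n^k)r_n^j(\cdot,\cdot+z_n^k) = U(-t_\infty+o(1))\bigl[U(-t_n^j)r_n^j(\cdot,\cdot+z_n^j)\bigr](\cdot,\cdot+z_\infty+o(1))\rightharpoonup 0
\]
in $B^1$, and induction on $j<k$ together with the explicit form of $r_n^{j}-r_n^{k-1}$ shows that this forces $\psi^k=0$, a contradiction. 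Thus $|t_n^j-t_n^k|+|z_n^j-z_n^k|\to\infty$ for all $j\neq k$.

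The Pythagorean decompositions \eqref{LPDo1}, \eqref{LPDo2} follow by induction on $J$. For the inductive step, the two-term identity \eqref{ISt2} from Lemma \ref{ISt} yields $\|r_n^{J-1}\|_{B^1}^2 = \|U(t_n^J)\psi^J(\cdot,\cdot-z_n^J)\|_{B^1}^2 + \|r_n^J\|_{B^1}^2+o_n(1)$; combining with the inductive hypothesis gives \eqref{LPDo2}, and the same argument in $L^2$ gives \eqref{LPDo1}. For \eqref{LPDo3}, I would prove it by a Brezis-Lieb induction: the proof of \eqref{BL} in Section~\ref{vari} (establishing the key pointwise a.e.\ convergence of $V(\theta)\tilde\psi_n\to V(\theta)\psi^*$ via the $|\partial_\theta|^{1/2}$ bound and Rellich-Kondrachov) applies to each profile after the translation/time-shift $U(-t_n^J)(\cdot,\cdot+z_n^J)$; the cross terms between distinct profiles vanish in the limit thanks to \eqref{LPDo4}, by a H\"older/equi-integrability argument that exploits the fact that time or space translations drive either $V(\theta)U(t_n^j)\psi^j$ or $V(\theta)U(t_n^k)\psi^k$ to vanish locally.

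Finally, \eqref{LPDrem} is the payoff. From \eqref{LPDo2} applied for every finite $J$ and $\|\phi_n\|_{B^1}\le A$ one obtains $\sum_{j=1}^{J}\|\psi^j\|_{B^1}^2 \le A^2$ for all $J$, so $\|\psi^J\|_{B^1}\to 0$ as $J\to J^*$. The lower bound \eqref{ISt1} then forces $\eta_{J-1}^{1/(1-\alpha)} A^{-\alpha/(1-\alpha)}\to 0$, that is $\eta_J\to 0$, which is \eqref{LPDrem}. The identities for $S[\phi_n]$ and $I[\phi_n]$ are linear combinations of the three already established Pythagorean statements. The main obstacle I anticipate is the $L^{2\sigma+2}_{\theta,x}$ orthogonality across different profiles in \eqref{LPDo3}: the one-profile Brezis-Lieb argument from Lemma \ref{ISt} must be upgraded to a multi-profile version, and the asymptotic orthogonality \eqref{LPDo4} has to be translated into vanishing of all mixed products under the non-standard propagator $U(t)V(\theta)$ while keeping careful control of the mixed spatial norms coming from the averaged nonlinearity.
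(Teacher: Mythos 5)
Your proposal follows essentially the same route as the paper: iterate Lemma \ref{ISt} on the remainders, telescope the two-term identities \eqref{ISt2}--\eqref{ISt3}, prove \eqref{LPDo4} by the weak-limit contradiction argument, and deduce \eqref{LPDrem} from the lower bound \eqref{ISt1} combined with the summability of $\|\psi^j\|_{B^1}^2$ (the paper phrases this last step as a contradiction with the decreasing sequence $A_J$, but the two arguments are equivalent). The ``main obstacle'' you anticipate for \eqref{LPDo3} does not arise: each application of Lemma \ref{ISt} to $r_n^J$ already provides the two-term $L^{2\s+2}_{\theta,x}$ decoupling \eqref{ISt3} between the newly extracted profile and the new remainder, so the multi-profile statement follows by simply telescoping these identities over $j\le J$, exactly as in the paper, with no cross-term or equi-integrability analysis needed.
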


\begin{proof}
	We will proceed with the proof inductively by using Lemma \ref{ISt}. Let $r_n^0=\phi_n$. We assume that we have a decomposition up to level $J \ge 1$ satisfying the properties \eqref{LPDo1} through \eqref{LPDo3}. After passing to a subsequence, we define
	\begin{equation*}
		\delta_J=\lim_{n\to \infty} \|U(t)V(\theta)r_n^J\|_{L_t^{2q}L_\theta^{q}L_x^{p_0}}, \qquad A_J= \lim_{n\to \infty} \|r_n^J\|_{B^1}.
	\end{equation*} 
	If $\delta_J=0$, we stop and set $J^*=J$. Otherwise, by applying Lemma \ref{ISt} to $\{r_n^J\}_{n=1}^\infty$, we obtain $\psi^{J+1}\in B^1$ and $\{(t_n^{J+1}, z_n^{J+1})\}_{n=1}^\infty \subset \R^2$ satisfying the conclusions of Lemma \ref{ISt}. Let $r_n^{J+1}:=r_n^J-\psi_n^{J+1}$, where $\psi_n^{J+1}:=U(t_n^{J+1})\psi^{J+1}(\cdot, \cdot-z_n^{J+1})$. Then, from \eqref{IStwc}, we obtain \eqref{LPDwc} for $J+1$. In addition, based on the inductive assumptions, \eqref{LPDo1}–\eqref{LPDo3} hold for $J+1$. 
	After passing to a subsequence, we define
	\begin{equation*}
		\delta_{J+1}=\lim_{n\to \infty} \|U(t)V(\theta)r_n^{J+1}\|_{L_t^{2q}L_\theta^{q}L_x^{p_0}}, \qquad A_{J+1}= \lim_{n\to \infty} \|r_n^{J+1}\|_{B^1}.
	\end{equation*} 
	If $\delta_{J+1}=0$, we stop and set $J^*=J+1$. Otherwise, the induction will continue. If the algorithm never terminates, we set $J^*=\infty$. From \eqref{ISt1} and \eqref{ISt2}, it holds that
	\begin{equation}
		A_J^2-A_{J+1}^2=\lim_{n\to\infty}(\|r_n^J\|_{B^1}^2-\|r_n^{J+1}\|_{B^1}^2)=\|\psi^{J+1}\|_{B^1}^2\gtrsim \delta_{J}^{\f{2}{1-\al}}A_{J}^{-\f{2\al}{1-\al}}=\Big(\f{\delta_J}{A_J}\Big)^\f{2}{1-\al} A_J^2,
	\end{equation}
	and we have
	\begin{equation*}
		A_J^2\Big(1-c \Big(\f{\delta_J}{A_J}\Big)^\f{2}{1-\al}\Big) \ge A_{J+1}^2.
	\end{equation*}
	Suppose that $\limsup_{J\to J^*}\delta_J=\delta_\infty>0$. Because $\{A_J\}_J$ is a decreasing sequence, for a sufficiently large $J$, we have
	\begin{equation*}
		A_J^2\Big(1-c \Big(\f{\delta_\infty}{2A_0}\Big)^\f{2}{1-\al}\Big) \ge A_{J+1}^2.
	\end{equation*}
	This implies that $\lim_{J\to J^*}A_J=0$. On the other hand, from \eqref{UVB}, we obtain $\delta_J\lesssim A_J$ uniformly, and  we have $\lim_{J\to J^*}A_J\gtrsim \delta_\infty$. However, this is contradictory. Therefore, we obtain \eqref{LPDrem}.\\

	We prove \eqref{LPDo4} holds. If not, there exist $0 \le j_1<j_2$ such that  
	\begin{equation}\label{j12}
		t_n^{j_1}-t_n^{j_2}\to \exists t^* \quad \text{and}\quad 	z_n^{j_1}-z_n^{j_2}\to \exists z^*, \quad \text{as} \quad n\to\infty.
	\end{equation}
	We may assume that for any $j_1<k<j_2$, $\{(t_n^{j_1}, z_n^{j_1})\}_n$ and $\{(t_n^{k}, z_n^{k})\}_n$ are orthogonal. Based on the construction of the profile, we have
	\begin{equation*}
		r_n^{j_1-1}= U(t_n^{j_1})\psi^{j_1}(\cdot-z_n^{j_1})  +\sum_{j_1<k<j_2} U(t_n^{k})\psi^{k}(\cdot-z_n^{k}) + U(t_n^{j_2})\psi^{j_2}(\cdot-z_n^{j_2}) + r_n^{j_2}.
	\end{equation*} 
	Therefore, we also have
	\begin{equation}\label{rj12}
		\begin{split}
			U(-t_n^{j_1})r_n^{j_1-1}(\cdot+z_n^{j_1})&- \psi^{j_1} =\sum_{j_1<k<j_2} U(t_n^{k}-t_n^{j_1})\psi^{k}(\cdot-z_n^{k}+z_n^{j_1})\\
			&\quad + U(t_n^{j_2}-t_n^{j_1})\psi^{j_2}(\cdot-z_n^{j_2}+z_n^{j_1}) + U(-t_n^{j_1})r_n^{j_2}(\cdot+z_n^{j_1}).	
		\end{split}
	\end{equation} 
	As $n\to \infty$, the left side converges to $0$ weakly in $B^1$. On the right side, since  $\{(t_n^{j_1}, z_n^{j_1})\}_{n=1}^\infty$ and $\{(t_n^{k}, z_n^{k})\}_{n=1}^\infty$ are orthogonal, the terms in the summation converge to $0$ weakly in $B^1$. Additionally, according to \eqref{j12}, we have
	\begin{equation*}
		U(t_n^{j_2}-t_n^{j_1})\psi^{j_2}(\cdot-z_n^{j_2}+z_n^{j_1}) \to    U(-t^*)\psi^{j_2}(\cdot+z^*)\quad \text{strongly in}\quad B^1.
	\end{equation*}
	Moreover, from \eqref{j12} and \eqref{LPDwc} for $j_2$, the last term on the right side of \eqref{rj12} converges to $0$ weakly in $B^1$. Therefore, by letting $n\to\infty$ in \eqref{rj12}, we obtain $0=U(-t^*)\psi^{j_2}(\cdot+z^*)$, which contradicts 
	\[ \|\psi^{j_2}\|_{B^1}\gtrsim(\delta_{j_2-1})^{\f{1}{1-\al}}A_{j_2-1}^{-\f{\al}{1-\al}}>0.\]
	Hence, \eqref{LPDo4} holds.
\end{proof}

\subsection{Perturbation lemma}

In this subsection, we prove the long-time perturbation lemma. We refer to \cite[Section 3]{NLScri}, but due to the anisotropy of \eqref{NLS}, the averaged nonlinearity, and the condition $\s\ge2$, we need some modifications. \\

we introduce the following notations :

\begin{align*}
	\|\psi\|_{S(I)}&:= 	\|\psi\|_{L_t^\infty L_x^2\cap L_t^4 L_z^\infty L_y^2 (I)}=\|\psi\|_{L_t^\infty L_x^2(I)} + 	\|\psi\|_{L_t^4 L_z^\infty L_y^2(I)}\\
	\|\psi\|_{S^1(I)}&:= \|\psi\|_{S(I)}+ \|\nabla_x \psi\|_{S(I)} +  \|y \psi\|_{S(I)}\\
	\|\psi\|_{N(I)}&:=\inf\{\|f_1\|_{L_t^1 L_x^2(I)}+ \|f_2\|_{L_t^\f{4}{3}L_z^1L_y^2(I)} : \psi=f_1+f_2\}\\
	\|\psi\|_{N^1(I)}&:=  \|\psi\|_{N(I)} +\|\nabla_x \psi\|_{N(I)} +  \|y \psi\|_{N(I)}.	
\end{align*}

We also define 

\begin{equation}\label{X0}
	\begin{split}
		\|\psi\|_{X^0(I)}&:=\|  \psi\|_{L_t^{2q^*} L_\theta^{q^*} L_x^{p_0^*}(I)} ,	
	\end{split}
\end{equation}

\begin{equation}\label{Xstar}
	\begin{split}
		\|\psi\|_{X^*(I)}&:=\| H^\f{s}{2} \psi\|_{L_t^{2q^*}L_\theta^{q^*}L_x^{p^*}(I)} + \| |\partial_z|^s \psi\|_{L_t^{2q^*}L_\theta^{q^*}L_x^{p^*}(I)} \\
		&\simeq\| \langle \nabla_x \rangle^s \psi\|_{L_t^{2q^*}L_\theta^{q^*}L_x^{p^*}(I)} + \| \langle y \rangle^s \psi\|_{L_t^{2q^*}L_\theta^{q^*}L_x^{p^*}(I)},	
	\end{split}
\end{equation}

\begin{equation}\label{Ystar}
	\begin{split}
		\|\psi\|_{Y^*(I)}&:=\| H^\f{s}{2}\psi\|_{L_t^{q_t^*}L_\theta^{q_\theta^*}L_x^{(p^*)'}(I)} +\||\partial_z|^s \psi\|_{L_t^{q_t^*}L_\theta^{q_\theta^*}L_x^{(p^*)'}(I)}\\
		&\simeq \| \langle \nabla\rangle^s\psi\|_{L_t^{q_t^*}L_\theta^{q_\theta^*}L_x^{(p^*)'}(I)} +\|\langle y\rangle^s \psi\|_{L_t^{q_t^*}L_\theta^{q_\theta^*}L_x^{(p^*)'}(I)},
	\end{split}
\end{equation}
where 
\begin{equation}
	p^*=\f{3\s-2}{\s-1}, \quad q^*=3\s-2, \quad s^*=\f{3(2\s-1)(\s-2)}{2(3\s-2)(\s-1)},
\end{equation}
\begin{equation}
	p_0^*=\Big(\f{1}{p^*}-\f{s^*}{3}\Big)^{-1}= \f{2(\s-1)(3\s-2)}{\s},
\end{equation}
and 
\begin{equation}
	q_t^*=\Big(\f{2\s-1}{2q}+\h\Big)^{-1}, \quad q_\theta^*=\Big(\f{2\s-1}{q}\Big)^{-1}
\end{equation}
The norm equivalences in \eqref{Xstar} and \eqref{Ystar} follow from \eqref{eqH2}.

\begin{lemma}\label{Xint} We have
	\begin{equation}\label{Xint0}
		\|f\|_{X^0(I)} \lesssim \|f\|_{X^*(I)} 
	\end{equation}
	\begin{equation}\label{Xint1}
		\begin{split}
			\|V(\theta)f\|_{X^*(I)} \lesssim \|f\|_{S^1(I)}^{\f{2\s-1}{3\s-2}}\|V(\theta)f\|_{L_t^{2q}L_\theta^{q}L_x^{p_0}(I)}^{\f{\s-1}{3\s-2}},
		\end{split}
	\end{equation}
	
	\begin{equation}\label{Xint2}
		\|V(\theta)f\|_{L_t^{2q}L_\theta^{q}L_x^{p_0}(I)}\lesssim  \|V(\theta)f\|_{X^0(I)}^\h \|f\|_{S^1(I)}^\h.
	\end{equation}
\end{lemma}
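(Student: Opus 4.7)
The three estimates are proved by a combination of Hölder interpolation, 2D Strichartz estimates for the Hermite propagator $V(\theta)$ from Lemma \ref{St2}, and Sobolev/spectral estimates.

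For \eqref{Xint0}, I will argue pointwise in $(t, \theta)$. The defining relation $\f{1}{p_0^*} = \f{1}{p^*} - \f{s^*}{3}$ yields the 3D Sobolev embedding $\|g\|_{L_x^{p_0^*}} \lesssim \||\nabla_x|^{s^*}g\|_{L_x^{p^*}}$. Splitting $|\nabla_x|^{s^*}$ into $y$- and $z$-pieces via \eqref{Mik}, using \eqref{eqH} to dominate $|\nabla_y|^{s^*}$ by $H^{s^*/2}$, and exploiting $s^*\le s$ together with the spectral gap $H\ge \h$ to pass from $H^{s^*/2}$ to $H^{s/2}$ and from $|\partial_z|^{s^*}$ to $|\partial_z|^s$ (Mikhlin in $z$), the argument concludes after integration in $(t, \theta)$.

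For \eqref{Xint1}, the key observation is that there is a 2D admissible pair $(\tilde p, \tilde q)$ which interpolates between a Strichartz norm and the target norm. A direct calculation shows that
\[
\tilde q = \f{2\s(2\s-1)}{\s+1}, \qquad \tilde p = \f{2\s(2\s-1)}{2\s^2-2\s-1}
\]
satisfies $\f{2}{\tilde q} + \f{2}{\tilde p} = 1$ together with the Hölder identities $\f{1}{q^*} = \f{\alpha}{\tilde q} + \f{\beta}{q}$ and $\f{1}{p^*} = \f{\alpha}{\tilde p} + \f{\beta}{p_0}$, where $\alpha = \f{2\s-1}{3\s-2}$ and $\beta = \f{\s-1}{3\s-2}$. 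Hölder in $(\theta, y)$ then gives, for fixed $(t, z)$,
\[
\|V(\theta)g\|_{L_\theta^{q^*}L_y^{p^*}} \le \|V(\theta)g\|_{L_\theta^{\tilde q}L_y^{\tilde p}}^{\alpha}\|V(\theta)g\|_{L_\theta^q L_y^{p_0}}^{\beta},
\]
and Lemma \ref{St2} bounds the first factor by $\|g\|_{L_y^2}$. I will apply this pointwise inequality with $g = H^{s/2}f$ (using $[V(\theta),H^{s/2}]=0$) to control the $H^{s/2}V(\theta)f$ component of $X^*$. Hölder in $(t, z)$ with exponents forming a 1D admissible-compatible pair turns the Strichartz factor into $\|H^{s/2}f\|_{L_t^{r_1}L_z^{r_2}L_y^2}^{\alpha}$, which is dominated by $\|f\|_{S^1}^{\alpha}$ through interpolation between $L_t^\infty L_x^2$ and $L_t^4 L_z^\infty L_y^2$ (and \eqref{eqH} to translate $H^{1/2}$ into $\nabla_y, y$). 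The remaining factor, which initially carries an $H^{s/2}$ derivative, will be reduced to $\|V(\theta)f\|_{L_t^{2q}L_\theta^qL_x^{p_0}}^{\beta}$ by a further spectral interpolation for $H^{s/2}$ on $L_y^{p_0}$ at the cost of another $\|f\|_{S^1}$ factor. The $|\partial_z|^sV(\theta)f$ component is handled identically, using the fractional chain rule in $z$ in place of the Hermite spectral interpolation.

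For \eqref{Xint2}, Hölder in $(t, \theta, x)$ with weight $\h$ yields
\[
\|V(\theta)f\|_{L_t^{2q}L_\theta^q L_x^{p_0}} \le \|V(\theta)f\|_{X^0}^{\h}\|V(\theta)f\|_{L_t^{r_t}L_\theta^{r_\theta}L_x^{r_x}}^{\h},
\]
where $(r_t, r_\theta, r_x)$ are determined by the exponent arithmetic $\f{1}{2q}=\f{1/2}{2q^*}+\f{1/2}{r_t}$ and its analogues. The intermediate norm will be controlled by $\|f\|_{S^1}$ by applying Lemma \ref{St2} in $(\theta, y)$ on a sub-admissible pair (any loss in $\theta$-integrability being absorbed on the bounded interval $[0, \f{\pi}{2}]$) followed by interpolation in $(t, z)$ between $L_t^\infty L_x^2$ and $L_t^4 L_z^\infty L_y^2$, both contained in $S^1$.

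The main obstacle is the derivative bookkeeping in \eqref{Xint1}: because $\|V(\theta)f\|_{L_t^{2q}L_\theta^qL_x^{p_0}}$ is derivative-free on the right, any $H^{s/2}$ (or $|\partial_z|^s$) produced by the interpolation must be fully transferred to the Strichartz side. This forces the specific admissible pair $(\tilde p, \tilde q)$ above and a careful split of $\|f\|_{S^1}$ into its $L_t^\infty L_x^2$ and $L_t^4 L_z^\infty L_y^2$ constituents.
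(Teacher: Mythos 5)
Your treatment of \eqref{Xint0} is fine and coincides with the paper's (Sobolev at the pair $(p_0^*,p^*,s^*)$ plus \eqref{Mik}, \eqref{eqH}, $s^*\le s$). The real problem is \eqref{Xint1}. The paper's key step is a three–dimensional Gagliardo–Nirenberg inequality applied pointwise in $(t,\theta)$ to $g=V(\theta)f$, namely $\||\nabla_x|^{s^*}g\|_{L_x^{p^*}}\lesssim \||\nabla_x|^{s}g\|_{L_x^{\f{2\s(2\s-1)}{2\s(\s-1)-1}}}^{\f{2\s-1}{3\s-2}}\,\|g\|_{L_x^{p_0}}^{\f{\s-1}{3\s-2}}$ (and its $|y|^{s}$ analogue), which places the \emph{entire} derivative on the factor destined for Lemma \ref{St2} and leaves the $L_x^{p_0}$ factor derivative-free; after H\"older in $(t,\theta)$ the second factor is exactly $\|V(\theta)f\|_{L_t^{2q}L_\theta^{q}L_x^{p_0}}^{\f{\s-1}{3\s-2}}$, as the lemma requires. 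Your replacement of this step by plain H\"older interpolation in $(\theta,y)$ applied to $g=H^{s/2}f$ (your exponents $\tilde p,\tilde q$ are indeed the paper's, and the exponent identities check out) necessarily leaves an $H^{s/2}$ on the $L_y^{p_0}$ factor, and H\"older cannot trade derivatives between factors. The proposed fix — "a further spectral interpolation for $H^{s/2}$ on $L_y^{p_0}$ at the cost of another $\|f\|_{S^1}$ factor" — cannot close the argument: (i) any interpolation that strips the derivative from $\|H^{s/2}V(\theta)f\|_{L_t^{2q}L_\theta^{q}L_x^{p_0}}$ writes it as $B^{\mu}\,\|V(\theta)f\|_{L_t^{2q}L_\theta^{q}L_x^{p_0}}^{1-\mu}$ with $\mu>0$, so the final power of the derivative-free norm drops strictly below $\f{\s-1}{3\s-2}$ and the stated exponents of \eqref{Xint1} cannot be recovered; and (ii) the high factor $B$ (an $H^{1/2}$-type norm at integrability $p_0$ in $y$ with $\theta$-exponent $q=2\s$) is not controlled by $\|f\|_{S^1}$: the $2$D-admissible $\theta$-exponent attached to $L_y^{p_0}$ is $\f{2p_0}{p_0-2}<4<2\s$, and on the bounded interval $[0,\f{\pi}{2}]$ H\"older only \emph{lowers} exponents, while reaching $L_y^{p_0}$ with $L_\theta^\infty$ via Sobolev would cost more than one derivative in $y$, which $S^1$ does not supply.

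The same "loss in $\theta$-integrability absorbed on the bounded interval" reasoning is backwards in your sketch of \eqref{Xint2}: the intermediate norm you introduce is derivative-free at an $L_x$-integrability for which the needed $\theta$-exponent exceeds the admissible one, so Lemma \ref{St2} plus H\"older on $[0,\f{\pi}{2}]$ does not reach it; at best one would have to interpolate with $L_\theta^\infty$ Sobolev bounds, which is a different (and more delicate) argument than what you state. The paper sidesteps both difficulties in the same way: for \eqref{Xint2} it uses the Gagliardo–Nirenberg inequality $\|f\|_{L_x^{p_0}}\lesssim \|f\|_{L_x^{p_0^*}}^{\h}\||\nabla_x|^{s}f\|_{L_x^{\f{2\s(\s-1)(3\s-2)}{3\s^3-9\s^2+10\s-4}}}^{\h}$, so that the second factor carries a derivative and can be handled by \eqref{eqH2}, Minkowski, and Lemma \ref{St2} within $S^1$. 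To repair your proof, replace the H\"older-only splits by these Gagliardo–Nirenberg inequalities in $x$ performed pointwise in $(t,\theta)$, before any Strichartz estimate is invoked.
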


\begin{proof}
	By Sobolev's inequality and Gagliardo-Nirenberg's inequality, we obtain
	\begin{equation}\label{GNX}
		\begin{split}
			\|f\|_{L_x^{p_0^*}} \lesssim \||\nabla_x|^{s^*} f\|_{L_x^{p^*}}&\lesssim \||\nabla_x|^s f\|_{L_x^{\f{2\s(2\s-1)}{2\s(\s-1)-1}}}^{\f{2\s-1}{3\s-2}} \|f\|_{L_x^{p_0}}^{\f{\s-1}{3\s-2}}.
		\end{split}
	\end{equation}
	\eqref{Xint0} follows from the first inequality in \eqref{GNX} and \eqref{eqH2}. Furthermore, from the second inequality in \eqref{GNX}, the interpolation
	\[  \||y|^{s^*} f\|_{L_x^{p^*}}\le \||y|^s f\|_{L_x^{\f{2\s(2\s-1)}{2\s(\s-1)-1}}}^{\f{2\s-1}{3\s-2}} \|f\|_{L_x^{p_0}}^{\f{\s-1}{3\s-2}}, 
	\]
	and H\"{o}lder's inequality in $\theta$ and $t$, we obtain
	\begin{equation*}
		\begin{split}
			\|V(\theta) f\|_{X^*}
			&\lesssim (\||\nabla_x|^s V(\theta) f\|_{L_t^{\f{4\s(2\s-1)}{\s+1}}L_{\theta}^{\f{2\s(2\s-1)}{\s+1}}L_x^{\f{2\s(2\s-1)}{2\s(\s-1)-1}}}\\
			&\qquad\qquad +\||y|^sV(\theta) f\|_{L_t^{\f{4\s(2\s-1)}{\s+1}}L_{\theta}^{\f{2\s(2\s-1)}{\s+1}}L_x^{\f{2\s(2\s-1)}{2\s(\s-1)-1}}})^{\f{2\s-1}{3\s-2}} \|f\|_{L_t^{2q} L_{\theta}^{q} L_x^{p_0}}^{\f{\s-1}{3\s-2}}.
		\end{split}
	\end{equation*}
	From \eqref{eqH2}, Minkowski's inequality, and Lemma \ref{St2}, we obtain
	\eqref{Xint1}. To obtain \eqref{Xint2}, we use
	\begin{equation*}
		\|f\|_{L_x^{p_0}}\lesssim \|f\|_{L_x^{p_0^*}}^\h \||\nabla_x|^sf\|_{L_x^{\f{2\s(\s-1)(3\s-2)}{3\s^3-9\s^2+10\s-4}}}^\h.
	\end{equation*}
	This follows from Gagliardo-Nirenberg's inequality.
\end{proof}

\begin{lemma}\label{XYstar}
	Let $F(z):=\lmd|z|^{2\s}z$ ($z\in \C$). Then, we have
	\begin{equation}\label{XYstar1}
		\Big\|V(\theta)\int_{0}^t U(t-\tilde{t}) \Fav(f(\tilde{t})) d \tilde{t}\Big\|_{X^*(I)}\lesssim \big\|F( V(\theta)f)\big\|_{Y^*(I)},
	\end{equation}
	\begin{equation}\label{XYstar2}
		\|F (V(\theta)f)\|_{Y^*(I)}\lesssim \|V(\theta)f\|_{X^*(I)} \|H^\h f\|_{L_t^4L_z^\infty L_x^2(I)}^2 \|V(\theta)f\|_{X^0(I)}^{2\s-2},
	\end{equation}
	\begin{equation}\label{XYstar3}
		\begin{split}
			\Big\|F(V(\theta)f)- F(V(\theta)g)\Big\|_{Y^*(I)}
			\lesssim & \|V(\theta)\big(f-g \big)\|_{X^*(I)}\\
			&\times(\|H^\h f\|_{L_t^4L_z^\infty L_x^2(I)}^2+  \|H^\h g\|_{L_t^4L_z^\infty L_x^2(I)}^2)\\
			&\times( \|V(\theta)f\|_{X^*(I)}^{2\s-2}+  \|V(\theta)g\|_{X^*(I)}^{2\s-2}).
		\end{split}	
	\end{equation}
\end{lemma}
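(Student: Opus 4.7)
The plan is to treat the three estimates in sequence, leveraging the fact that the starred indices sit at the endpoints of the admissible range of Lemma~\ref{ExSt} and Proposition~\ref{ExSt2}. For \eqref{XYstar1}, I observe that $H=-\Delta_y+|y|^2$ and $|\partial_z|^{s^*}$ act on disjoint variables and each commutes with both $U(t)$ and $V(\theta)$. Applying $H^{s^*/2}$ (resp.\ $|\partial_z|^{s^*}$) to the Duhamel integral and commuting past $V(\theta)$ and $U(t-\tilde t)$ slides the derivative through the average to produce $V(\theta)\int_0^t U(t-\tilde t)\big(H^{s^*/2}F(V(\cdot)f)\big)_{\mathrm{av}} d\tilde t$. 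I then invoke Lemma~\ref{ExSt} with $(p,q,s)=(p^*,q^*,s^*)$, which lies at the upper endpoint for $p,q$ and the lower endpoint for $s$: the identities $\frac{1}{p^*} = \frac{1}{2\s}\big(\frac{2s^*(\s-1)}{3}+1\big)$, $q^*=(\s-1)p^*$, $\frac{1}{q_t^*} = \frac{2\s-1}{2q^*}+\h$, and $\frac{1}{q_\theta^*} = \frac{2\s-1}{q^*}$ all follow directly from the definitions. Summing the $H^{s^*/2}$ and $|\partial_z|^{s^*}$ pieces yields \eqref{XYstar1}.

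For \eqref{XYstar2}, I mirror the proof of Proposition~\ref{ExSt2} with starred indices. The scaling identity $\frac{1}{(p^*)'}=\frac{1}{p^*}+\frac{2\s-2}{p_0^*}$, together with the time/$\theta$ identities above, lets H\"older's inequality split $|V(\theta)f|^{2\s}V(\theta)f$ into one factor carrying the derivative in $L_t^{2q^*}L_\theta^{q^*}L_x^{p^*}$, two factors in $L_t^{4}L_\theta^{\infty}L_z^{\infty}L_y^{2}$, and $2\s-2$ factors in $L_t^{2q^*}L_\theta^{q^*}L_x^{p_0^*}$. To push the $Y^*$-derivatives onto a single factor, I use \eqref{eqH} to rewrite $H^{s^*/2}$ as $\langle\nabla_y\rangle^{s^*}+\langle y\rangle^{s^*}$: the $\langle y\rangle^{s^*}$ piece is a multiplier and is absorbed into the $X^*$-factor, while $\langle\nabla_y\rangle^{s^*}$ and $|\partial_z|^{s^*}$ obey the standard fractional chain rule (after a Minkowski rearrangement of $L_x^{(p^*)'}$ into $L_z L_y$) since $s^*\in[0,1]$ throughout $\s\in[2,4]$. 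The two $L_\theta^{\infty}L_z^{\infty}L_y^{2}$ factors of $V(\theta)f$ collapse to $\|H^{1/2}f\|_{L_t^{4}L_z^{\infty}L_y^{2}}$ by unitarity of $V(\theta)$ on $L_y^{2}$ together with \eqref{eqH}, exactly as in Proposition~\ref{ExSt2}.

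For \eqref{XYstar3}, the plan is to use the identity
\[
F(V(\theta)f)-F(V(\theta)g)=\int_0^1\Big(F_z(w_\tau)V(\theta)(f-g)+F_{\bar z}(w_\tau)\overline{V(\theta)(f-g)}\Big) d\tau
\]
with $w_\tau:=V(\theta)(\tau f+(1-\tau)g)$ and the pointwise bound $|F_z(w)|,|F_{\bar z}(w)|\lesssim |w|^{2\s}$, and then rerun the H\"older/chain-rule machinery from the previous step. The $s^*$-derivative lands either on $V(\theta)(f-g)$, yielding $\|V(\theta)(f-g)\|_{X^*}$, or on $w_\tau$; the triangle inequality $\|w_\tau\|_{X^*}\le\|V(\theta)f\|_{X^*}+\|V(\theta)g\|_{X^*}$ (and the same inequality for $L_t^4 L_z^\infty L_y^2$) then distributes across the $2\s$ remaining factors to give the symmetric sum on the right-hand side of \eqref{XYstar3}. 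The main obstacle throughout the last two steps is ensuring that the fractional chain rule interacts cleanly with the anisotropic, non-separable norm $L_x^{(p^*)'}$ on the nonlinearity; I handle this by the Minkowski rearrangement into $L_z L_y$ slices already used in Lemmas~\ref{ExSt} and~\ref{St3}, applying $\langle\nabla_y\rangle^{s^*}$ or $\langle y\rangle^{s^*}$ in the inner $y$-slice and $|\partial_z|^{s^*}$ in the outer $z$-slice.
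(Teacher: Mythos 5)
Your proposal is correct and takes essentially the same route as the paper: \eqref{XYstar1}--\eqref{XYstar2} are exactly the endpoint case $(p^*,q^*,s^*)$ of Lemma~\ref{ExSt} and Proposition~\ref{ExSt2}, and \eqref{XYstar3} uses the same fundamental-theorem-of-calculus representation of $F(z)-F(w)$ followed by the H\"older/fractional chain rule splitting. The only detail left implicit in your plan is that when the derivative falls on $w_\tau$, the undifferentiated factor $V(\theta)(f-g)$ lands in the $X^0$ slot and must then be converted to $X^*$ via \eqref{Xint0}, which is precisely how the paper closes the argument.
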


\begin{proof}
	We see that $p^*$, $q^*$, $s^*$ satisfy \eqref{Exi}. Thus, \eqref{XYstar1} and \eqref{XYstar2} follow Lemma \ref{ExSt} and Proposition \ref{ExSt2} respectively.
	We prove \eqref{XYstar3} holds.  For $z,w \in C$, it holds that
	\begin{equation*}
		\begin{split}
			F(z)-F(w)=(z-w)\int_{0}^1 F_z(w+\al (z-w))d\al+ \overline{(z-w)}\int_{0}^1 F_{\bar{z}}(w+\al (z-w))d\al.
		\end{split}
	\end{equation*}
	Then, using similar calculations to the proof of  Proposition \ref{ExSt2}, the fractional chain rule (cf.~\cite[Lemma A.10 and A.11]{NLScri}), and \eqref{Xint0}, we obtain
	\begin{equation*}
		\begin{split}
			\Big\|&F(V(\theta)f)- F(V(\theta)g)\Big\|_{Y^*(I)}\\
			\lesssim & \|V(\theta)\big(f-g \big)\|_{X^*(I)}(\|H^\h f\|_{L_t^4L_z^\infty L_x^2(I)}^2 \|V(\theta)f\|_{X^0(I)}^{2\s-2}  +  \|H^\h g\|_{L_t^4L_z^\infty L_x^2(I)}^2\|V(\theta)g\|_{X^0(I)}^{2\s-2})\\
			& + \|V(\theta)\big(f-g \big)\|_{X^0(I)}\times\big( \|V(\theta)f\|_{X^*(I)}\|H^\h f\|_{L_t^4L_z^\infty L_x^2(I)}^2\|V(\theta)f\|_{X^0(I)}^{2\s-3} \\
			&\qquad \qquad \qquad \qquad \qquad \qquad +\|V(\theta)g\|_{X^*(I)}\|H^\h g\|_{L_t^4L_z^\infty L_x^2(I)}^2 \|V(\theta)g\|_{X^0(I)}^{2\s-3}\big)\\
			\lesssim & \|V(\theta)\big(f-g \big)\|_{X^*}\big(\|H^\h f\|_{L_t^4L_z^\infty L_x^2}^2+  \|H^\h g\|_{L_t^4L_z^\infty L_x^2}^2\big) \big( \|V(\theta)f\|_{X^*}^{2\s-2}+  \|V(\theta)g\|_{X^*}^{2\s-2}\big).
		\end{split}
	\end{equation*}
	
\end{proof}
\begin{lemma}[Short-time perturbation]\label{Stp}
	Let $I\subset \R$, $t_0\in I$, and $u\in C(I,B^1)$ be a solution to 
	\begin{equation}\label{NLSap}
		i\partial_t u=-\partial_z^2u+\lmd \Fav(u)+e,
	\end{equation}
	for some function $e$. Let $\phi\in C(I, B^1)$ be a unique solution to \eqref{NLS} with $\phi|_{t=t_0}=\phi(t_0) \in B^1$. Assume that $u$ and $\phi$ satisfy
	\begin{equation}\label{Stp1}
		\|u\|_{L_t^\infty B^1(I)}\le M, \quad \|\phi\|_{L_t^\infty B^1(I)}\le M'
	\end{equation}
	for some positive constants $M$ and $M'$. We also assume that there exist $\delta=\delta(M, M')>0$ and $\ep_0=\ep_0(M,M')$, and $u$ and $\phi$ satisfy the smallness conditions
	\begin{equation}\label{Stp2}
		\|V(\theta)u\|_{X^*(I)} \le \delta
	\end{equation}
	\begin{equation}\label{Stp3}
		\|U(t-t_0)V(\theta)(\phi(t_0)-u(t_0))\|_{X^*(I)}\le \ep
	\end{equation}
	\begin{equation}\label{Stp4}
		\|e\|_{N^1(I)}\le \ep.
	\end{equation}
	hold for some $\ep\in (0,\ep_0]$. Then, the following estimates hold,
	\begin{equation}\label{Stp5}
		\|V(\theta)(\phi-u)\|_{X^*(I)} \lesssim \ep
	\end{equation}
	\begin{equation}\label{Stp6}
		\|\phi-u\|_{S^1(I)} \lesssim M+M'
	\end{equation}
	\begin{equation}\label{Stp7}
		\|\phi\|_{S^1(I)}\lesssim M'
	\end{equation}
	\begin{equation}\label{Stp8}
		\|F(V(\theta)\phi)-F(V(\theta)u)\|_{Y^*(I)} \lesssim \ep
	\end{equation}
	\begin{equation}\label{Stp9}
		\|\Fav(\phi)-\Fav(u)\|_{N^1(I)}\lesssim M+M'.
	\end{equation}
\end{lemma}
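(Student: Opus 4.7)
The plan is to run a bootstrap argument for the difference $w := \phi - u$, driven by the multilinear estimates of Lemma \ref{XYstar} and the Strichartz bounds of Lemma \ref{St4}. Subtracting \eqref{NLSap} from \eqref{NLS},
\begin{equation*}
i\partial_t w + \partial_z^2 w = \lambda\bigl(\Fav(\phi) - \Fav(u)\bigr) - e, \qquad w(t_0) = \phi(t_0) - u(t_0).
\end{equation*}
Applying $V(\theta)$ to the Duhamel representation and taking the $X^*(I)$ norm, \eqref{XYstar1} for the nonlinear piece and a Strichartz-based bound $\|V(\theta)\int_{t_0}^t U(t-\tau) e\, d\tau\|_{X^*(I)} \lesssim \|e\|_{N^1(I)}$ for the forcing (proved via Lemma \ref{St1}, Minkowski, and \eqref{Xint1}) produce
\begin{equation*}
\|V(\theta) w\|_{X^*(I)} \le C\|U(t-t_0) V(\theta) w(t_0)\|_{X^*(I)} + C\|F(V(\theta)\phi) - F(V(\theta)u)\|_{Y^*(I)} + C\|e\|_{N^1(I)},
\end{equation*}
so that the first and third terms are bounded by $C\ep$ thanks to \eqref{Stp3}--\eqref{Stp4}.

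Next, set $A_v := \|H^{1/2}v\|_{L_t^4 L_z^\infty L_y^2(I)} \le \|v\|_{S^1(I)}$ and $B_v := \|V(\theta) v\|_{X^*(I)}$. Estimate \eqref{XYstar3} bounds the middle term by $C\, B_w\,(A_\phi^2 + A_u^2)\,\bigl(B_\phi^{2\sigma-2} + B_u^{2\sigma-2}\bigr)$. To control the $A$'s, I would apply Lemma \ref{St4} (with $X = I_d, H^{1/2}, |\partial_z|^{1/2}$) to the Duhamel formulas of $\phi$ and $u$, combined with the interpolations \eqref{Xint1}--\eqref{Xint2}, the a priori bounds \eqref{Stp1}, the smallness \eqref{Stp2}, and $\|e\|_{N^1(I)} \le \ep$; this yields $\|u\|_{S^1(I)} \lesssim M$ and (as an auxiliary bootstrap quantity, see below) $\|\phi\|_{S^1(I)} \lesssim M'$, hence $A_\phi + A_u \lesssim M+M'$. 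Since $B_\phi \le B_u + B_w \le \delta + B_w$, the integral inequality reduces to
\begin{equation*}
B_w \le C\ep + C(M+M')^2\bigl(\delta + B_w\bigr)^{2\sigma-2} B_w.
\end{equation*}
Choosing $\delta = \delta(M,M')$ so small that $C(M+M')^2(2\delta)^{2\sigma-2} \le \tfrac{1}{2}$ and then $\ep_0 = \ep_0(M,M')$ correspondingly small, a continuity argument on the set $\{t\in I : \|V(\theta)w\|_{X^*([t_0,t])} \le 2\delta\}$ closes and yields \eqref{Stp5}. Once $B_w \lesssim \ep$ is established, \eqref{Stp6}--\eqref{Stp9} follow by reapplying Lemma \ref{St4} to the Duhamel formulas for $\phi, u, w$ combined with \eqref{XYstar2}--\eqref{XYstar3}.

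The principal technical obstacle is the self-referential appearance of $\|\phi\|_{S^1(I)}$ in the above: Lemma \ref{St4} bounds $\|\phi\|_{S^1(I)}$ only in terms of $A_\phi \le \|\phi\|_{S^1(I)}$ and $\|V(\theta)\phi\|_{L_t^{2q}L_\theta^q L_x^{p_0}}$, the latter estimated via \eqref{Xint2} by $\|\phi\|_{S^1(I)}^{1/2}B_\phi^{1/2}$, producing a schematic self-improvement inequality $\|\phi\|_{S^1(I)} \lesssim M' + \|\phi\|_{S^1(I)}^{\sigma+2}\,B_\phi^{\sigma-1}$. This closes to $\|\phi\|_{S^1(I)} \lesssim M'$ precisely when $B_\phi$ (equivalently $\delta$) is small depending on $M'$, which forces $\delta$ and $\ep_0$ to depend on \emph{both} $M$ and $M'$ as in the statement. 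Because all smallness is absorbed into these constants, no subdivision of $I$ is needed---``short-time'' here refers to the smallness of $\delta$ and $\ep$, not to the length of $I$.
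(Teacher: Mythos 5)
Your proposal is correct and follows essentially the same route as the paper's proof: Duhamel for $w=\phi-u$, the estimates of Lemmas \ref{St4}, \ref{Xint}, and \ref{XYstar}, and a bootstrap/absorption with $\delta,\ep_0$ depending on $M,M'$ and no subdivision of $I$. The only organizational difference is that the paper first bounds $\|U(t-t_0)V(\theta)\phi(t_0)\|_{X^*(I)}\lesssim\delta$ via the Duhamel formula for $u$ together with \eqref{Stp2}--\eqref{Stp4}, closes the bootstrap for $\|\phi\|_{S^1}$ and $\|V(\theta)\phi\|_{X^*}$ first, and then handles $w$ by a direct absorption, whereas you couple the $\phi$- and $w$-bounds in one continuity argument --- which works, provided you keep in mind that the smallness of $\|V(\theta)\phi\|_{X^*}$ is not literally ``equivalent'' to $\delta$ being small but must be carried as a bootstrap hypothesis through $\|V(\theta)\phi\|_{X^*}\le\|V(\theta)u\|_{X^*}+\|V(\theta)w\|_{X^*}$, exactly as your ``auxiliary bootstrap quantity'' remark suggests.
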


\begin{proof}
	We begin by deriving the bounds on $u$ and $\phi$: From Lemmas \ref{St3}, \ref{St4}, and \ref{Xint}, we have
	\begin{equation}\label{BootSu1}
		\begin{split}
			\|u\|_{S^1(I)} &\lesssim \|u\|_{L_t^\infty B^1(I)} + \|\Fav(u)\|_{N^1(I)} +\|e\|_{N^1(I)} \\
			&\lesssim M + \|u\|_{S^1(I)}\|H^\h u\|_{L_t^4 L_z^\infty L_y^2(I)}^2\|V(\theta)u\|_{L_t^{2q}L_\theta^{q}L_x^{p_0}(I)}^{2\s-2} +\ep\\
			&\lesssim M + \delta^{\s-1}\|u\|_{S^1(I)}^{\s+2}+\ep.
		\end{split}
	\end{equation}
	By choosing $\delta$ and $\ep_0$ to be small, depending on $M$, the bootstrap argument yields
	\begin{equation}
		\|u\|_{S^1(I)}\lesssim M.
	\end{equation}
	Moreover, choosing $\delta$ is small, depending on $M$, and using Lemmas \ref{XYstar} and \ref{St2}, we obtain
	\begin{equation}\label{BootSu2}
		\begin{split}
			\|U(t-t_0)V(\theta)u(t_0)\|_{X^*(I)} &\lesssim \|V(\theta)u\|_{X^*(I)} + \|H^\h u\|_{L_t^4 L_z^\infty L_y^2(I)}^2 \|V(\theta)u\|_{X^*(I)}^{2\s-1} + \|e\|_{N^1(I)} \\
			&\lesssim \delta  + M^{2} \delta^{2\s-1}+\ep \lesssim \delta.
		\end{split}
	\end{equation}
	From this estimate, \eqref{Stp3}, and the triangle inequality, we have
	\begin{equation}
		\|U(t-t_0)V(\theta)\phi(t_0)\|_{X^*(I)}\le C_0 \delta	
	\end{equation}
	for some $C_0>0$. Let $C_1>0$ be a positive constant that satisfies
	\begin{equation}\label{S1B1}
		\|U(t)V(\theta)\psi\|_{S^1(\R)}\le C_1 \|\psi\|_{B^1}
	\end{equation}
	for any $\psi\in B^1$. Then, there exists a small finite interval $J\subset I$ such that $t_0\in J$ and $\phi$ satisfies
	\begin{equation*}
		\|\phi\|_{S^1(J)} \le 2C_1 M',\quad \|V(\theta)\phi\|_{X^*(J)}\le 2 C_0\delta.
	\end{equation*}
	Similar to \eqref{BootSu1} and \eqref{BootSu2}, we obtain
	\begin{equation}\label{BootS1}
		\begin{split}
			\|\phi\|_{S^1(J)} &\le  C_1 \|\phi(t_0)\|_{B^1(J)} + C\|\Fav(\phi)\|_{N^1(J)} \\
			&\le C_1 M' + C\|\phi\|_{S^1(J)}\|H^\h \phi\|_{L_t^4 L_z^\infty L_y^2(J)}^2\|V(\theta)\phi\|_{L_t^{2q}L_\theta^{q}L_x^{p_0}(J)}^{2\s-2}\\
			&\le  C_1 M' + C\|\phi\|_{S^1(J)}^{\s+2}\|V(\theta)\phi\|_{X^*(J)}^{\s-1}\\
			&\le  C_1 M' + C_2( 2C_1 M')^{\s+2}(2C_0\delta)^{\s-1}.
		\end{split}
	\end{equation}
	and 
	\begin{equation}\label{BootS2}
		\begin{split}
			\|V(\theta)\phi\|_{X^*(J)} &\le C_0\delta + C\|F(\phi)\|_{Y^*(J)} \\
			&\le C_0\delta + C\|H^\h \phi\|_{L_t^4 L_z^\infty L_y^2(J)}^2\|V(\theta)\phi\|_{X^*(J)}^{2\s-1}\\
			&\le C_0\delta + C\|\phi\|_{S^1(J)}^{2}\|V(\theta)\phi\|_{X^*(J)}^{2\s-1}\\
			&\le C_0\delta + C_2( 2C_1 M')^{2}(2C_0\delta)^{2\s-1}.
		\end{split}
	\end{equation}
	If $\delta$ is sufficiently small, depending on $C_0$, $C_1$, $C_2$, $M$, and $M'$, the bootstrap argument yields:
	\begin{equation*}
		\|\phi\|_{S^1(I)} \le 2C_1 M' ,\quad \|V(\theta)\phi\|_{X^*(I)}\le 2 C_0\delta.
	\end{equation*}
	In particular, the first estimate implies that \eqref{Stp7} holds. Next, we derive the claimed bounds for $w:=\phi-u$. Here, $w$ is the solution to
	\begin{equation}
		i\partial_t w=-\partial_z^2 w + \lmd\Fav(w+u)-\lmd\Fav(u)-e, \qquad w(t_0)=\phi(t_0)-u(t_0).
	\end{equation}
	From Lemma \ref{XYstar}, we obtain
	\begin{equation}
		\begin{split}
			&\|V(\theta)w\|_{X^*(I)}\\
			&\lesssim \|U(t-t_0)V(\theta)w(t_0)\|_{X^*(I)}+ \| F(V(\theta)(w+u))-F(V(\theta)u)\|_{Y^*(I)}+ \|e\|_{N^1(I)}	 \\
			&\lesssim \ep + \|F(V(\theta)(w+u))-F(V(\theta)u)\|_{Y^*(I)}.
		\end{split}
	\end{equation}
	We use Lemma \ref{XYstar} again to obtain
	\begin{equation}\label{StFFY}
		\begin{split}
			&\|F(V(\theta)(w+u))-F(V(\theta)u)\|_{Y^*(I)} \\
			&\lesssim\|V(\theta)w\|_{X^*(I)}(\|\phi\|_{S^1(I)}^2+\|u\|_{S^1(I)}^2)(\|V(\theta)\phi\|_{X^*(I)}^{2\s-2}+\|V(\theta)u\|_{X^*(I)}^{2\s-2})\\
			&\lesssim \|V(\theta)w\|_{X^*(I)}(M^2 + (M')^2)\delta^{2\s-2}.
		\end{split}
	\end{equation}
	Therefore, we have 
	\begin{equation}
		\begin{split}
			\|V(\theta)w\|_{X^*(I)}&\lesssim \ep + \|V(\theta)w\|_{X^*(I)}(M^2+ (M')^2)\delta^{2\s-2}.
		\end{split}
	\end{equation} 
	If $\delta$ is sufficiently small, depending on $M$ and $M'$, \eqref{Stp5} holds. By applying \eqref{Stp5} to \eqref{StFFY}, we obtain \eqref{Stp8}. On the other hand, we have
	\begin{equation}\label{StFFN0}
		\begin{split}
			\|w\|_{S^1(I)}&\lesssim \|\phi(t_0)-u(t_0)\|_{B^1}+ \|\Fav(w+u)-\Fav(u)\|_{N^1(I)} + \|e\|_{N^1(I)}\\
			&\lesssim M+ M' + \ep + \|\Fav(w+u)-\Fav(u)\|_{N^1(I)}
		\end{split}
	\end{equation} 
	and 
	\begin{equation}\label{StFFN}
		\begin{split}
			&\|\Fav(w+u)-\Fav(u)\|_{N^1(I)}\\
			&\lesssim \|w\|_{S^1(I)}(\|\phi\|_{S^1(I)}^2+\|u\|_{S^1(I)}^2)(\|V(\theta)\phi\|_{L_t^{2q}L_\theta^{q}L_x^{p_0}(I)}^{2\s-2}+\|V(\theta)u\|_{L_t^{2q}L_\theta^{q}L_x^{p_0}(I)}^{2\s-2})\\
			&\lesssim \|w\|_{S^1(I)}(M+(M')^2)(\delta^{\s-1}(M')^{\s-1}+\delta^{\s-1}M^{\s-1}).
		\end{split}
	\end{equation}
	Combining \eqref{StFFN0} and \eqref{StFFN}, and choosing $\delta$ small depending on $M$ and $M'$, we have \eqref{Stp6}.
	Finally, applying \eqref{Stp6} to \eqref{StFFN}, \eqref{Stp9} holds.
\end{proof}

\begin{prop}[Long-time perturbation]\label{Ltp}
	Let $I\subset \R$, $t_0\in I$, and $u\in C(I,B^1)$ be a solution to 
	\begin{equation}\label{NLSap2}
		i\partial_t u=-\partial_z^2u+\lmd \Fav(u)+e,
	\end{equation}
	for some function $e$. Assume that 
	\begin{equation}\label{Ltp1}
		\|u\|_{L_t^\infty B^1(I)}\le M,
	\end{equation}
	\begin{equation}\label{Ltp2}
		\|V(\theta)u\|_{L_t^{2q}L_\theta^{q}L_x^{p_0}(I)}\le L,
	\end{equation}
	for some positive constants $M$ and $L$, and there exists a unique solution $\phi\in C(I,B^1)$ to \eqref{NLS} with $\phi|_{t=t_0}=\phi(t_0)$ that satisfies:
	\begin{equation}\label{Ltp3}
		\|\phi\|_{L_t^\infty B^1(I)}\le M'
	\end{equation}
	for a positive constant $M'$. Assume also that there exists $\ep_1=\ep_1(M, L, M')$ such that the smallness conditions
	
	\begin{equation}\label{Ltp4}
		\|U(t)V(\theta)(\phi(t_0)-u(t_0))\|_{L_t^{2q}L_\theta^{q}L_x^{p_0}(I)}\le \ep
	\end{equation}
	\begin{equation}\label{Ltp5}
		\|e\|_{N^1(I)}\le \ep
	\end{equation}
	holds for some $\ep\in (0,\ep_1]$. Then the solution $\phi$ to \eqref{NLS} satisfies:
	\begin{equation}\label{Ltp6}
		\|V(\theta)(\phi-u)\|_{L_t^{2q}L_\theta^{q}L_x^{p_0}(I)} \le C(M,L,M') \ep^c
	\end{equation}
	\begin{equation}\label{Ltp7}
		\|\phi-u\|_{S^1(I)} \le C(M,L,M'),
	\end{equation}
	\begin{equation}\label{Ltp8}
		\|\phi\|_{S^1(I)} \le C(M,L,M'),
	\end{equation}
	where $0<c<1$.
\end{prop}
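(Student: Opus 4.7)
The plan is to combine the short-time perturbation lemma (Lemma \ref{Stp}) with a partitioning-and-induction argument. First I would upgrade the hypothesis \eqref{Ltp2} into a bound on $\|V(\theta)u\|_{X^*(I)}$. Namely, by applying Lemma \ref{St1}, Lemma \ref{St4} and Lemma \ref{Xint} to the integral equation for $u$ (using \eqref{Ltp1}, \eqref{Ltp2}, and \eqref{Ltp5}), a bootstrap gives $\|u\|_{S^1(I)} \le C(M,L)$, after which \eqref{Xint1} yields
\begin{equation*}
\|V(\theta) u\|_{X^*(I)} \lesssim \|u\|_{S^1(I)}^{(2\sigma-1)/(3\sigma-2)} L^{(\sigma-1)/(3\sigma-2)} \le C(M,L).
\end{equation*}

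Next, fix $\delta=\delta(M,M')$ and $\varepsilon_0=\varepsilon_0(M,M')$ as in Lemma \ref{Stp}, and partition $I$ into consecutive subintervals $I_0,\dots,I_{N-1}$ with $t_0$ the left endpoint of $I_0$ and $N=N(M,L,M',\delta)$, such that $\|V(\theta) u\|_{X^*(I_j)} \le \delta$ on each piece; this is possible by absolute continuity of the integrand defining the $X^*$ norm together with the previous bound. On $I_0$ the short-time perturbation hypothesis \eqref{Stp3} is already satisfied with $\varepsilon_0 := \varepsilon$, since $\|U(\cdot - t_0)V(\theta)(\phi(t_0)-u(t_0))\|_{X^*(I_0)} \lesssim \varepsilon$ follows from Lemma \ref{St4}-style Strichartz bounds on the free evolution together with \eqref{Ltp4} after interpolation via Lemma \ref{Xint}. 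Lemma \ref{Stp} then delivers the estimates \eqref{Stp5}--\eqref{Stp9} on $I_0$.

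I would then iterate. At the left endpoint $t_{j+1}$ of the next interval, Duhamel for the difference $w=\phi-u$ gives
\begin{equation*}
U(-t_{j+1})w(t_{j+1}) = U(-t_j)w(t_j) -i\lambda \int_{t_j}^{t_{j+1}} U(-s)\bigl[F_{\mathrm{av}}(\phi)-F_{\mathrm{av}}(u)\bigr]ds + i\int_{t_j}^{t_{j+1}} U(-s)e\,ds,
\end{equation*}
so that Lemma \ref{XYstar} combined with the output \eqref{Stp8} on $I_j$ and \eqref{Ltp5} yields
\begin{equation*}
\|U(\cdot - t_{j+1})V(\theta)(\phi(t_{j+1})-u(t_{j+1}))\|_{X^*(I_{j+1})} \le C\,\varepsilon_j + C\varepsilon =: \varepsilon_{j+1},
\end{equation*}
for a constant $C=C(M,M')$. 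Inductively $\varepsilon_j \le (2C)^j \varepsilon$, so choosing $\varepsilon_1 = (2C)^{-N}\min\{\varepsilon_0,\delta\}$ keeps every $\varepsilon_j$ inside the domain of validity of Lemma \ref{Stp}. Summing the resulting $N$ applications yields $\|V(\theta)(\phi-u)\|_{X^*(I)} \le C(M,L,M')\,\varepsilon$ and $\|\phi-u\|_{S^1(I)} \le C(M,L,M')$, giving \eqref{Ltp7} and, a fortiori, \eqref{Ltp8}. Finally \eqref{Ltp6} follows from \eqref{Xint0} and \eqref{Xint2}:
\begin{equation*}
\|V(\theta)(\phi-u)\|_{L_t^{2q}L_\theta^{q}L_x^{p_0}(I)} \lesssim \|V(\theta)(\phi-u)\|_{X^*(I)}^{1/2}\|\phi-u\|_{S^1(I)}^{1/2} \le C(M,L,M')\,\varepsilon^{1/2},
\end{equation*}
so one may take $c = 1/2$.

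The main obstacle is propagating smallness of the Strichartz norm of $\phi(t_j)-u(t_j)$ across the $N$ intervals without letting the accumulated constant $(2C)^N$ destroy the argument; this is why the thresholds $\delta$ and $\varepsilon_0$ from Lemma \ref{Stp} must be selected before $N$ is fixed, and why $\varepsilon_1$ in Proposition \ref{Ltp} is allowed to depend on $L$ and $M'$. A secondary technical point is the need to convert freely between the $L^{2q}L^qL^{p_0}$ norm used in the statement and the anisotropic norms $X^*$, $X^0$, $Y^*$ in which the nonlinearity is estimated; this conversion is handled entirely by Lemma \ref{Xint} and Lemma \ref{XYstar}, which is why the exponent $c$ ends up strictly less than $1$.
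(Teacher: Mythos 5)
Your proposal follows essentially the same route as the paper: bootstrap $\|u\|_{S^1(I)}\le C(M,L)$ from \eqref{Ltp2}, upgrade to $\|V(\theta)u\|_{X^*(I)}\le C(M,L)$ via \eqref{Xint1}, partition $I$ so that the $X^*$ norm of $u$ is below the threshold $\delta(M,M')$ of Lemma \ref{Stp}, propagate the smallness of the difference interval by interval through Duhamel (the paper packages the needed inhomogeneous estimate from a past interval to a future one as Lemma \ref{ExD}), and finally recover \eqref{Ltp6} from Lemma \ref{Xint}. The only inaccuracy is quantitative: converting \eqref{Ltp4} into the $X^*$ smallness required by \eqref{Stp3} via \eqref{Xint1} costs a power, giving $\ep^{a}(M+M')^{b}$ with $a=\f{\s-1}{3\s-2}$ rather than $\ep$, so the final exponent is $c=a/2$ rather than $1/2$; since the statement only asks for some $c\in(0,1)$, this does not affect the argument.
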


\begin{proof}
	Without loss of generality, we assume that $I=[0,\infty)$ and $t_0=0$.
	We first prove 
	\begin{equation}\label{uS1}
		\|u\|_{S^1(I)}\le C(M,L).
	\end{equation}
	From \eqref{Ltp2}, for any small $\eta>0$, we can divide the interval $I$ into $J(\eta,L)$ subintervals $I_j=[t_{j-1}, t_{j})$ ($t_J=\infty$),
	such that
	\begin{equation}
		\|V(\theta)u\|_{L_t^{2q}L_\theta^{q}L_x^{p_0}(I_j)}\le \eta.	
	\end{equation}
	From Lemma \ref{St4}, we have
	\begin{equation}\label{BootSL}
		\begin{split}
			\|u\|_{S^1(I_j)} &\lesssim \|u(t_j)\|_{B^1(I_j)} + \|\Fav(u)\|_{N^1(I_j)} +\|e\|_{N^1(I_j)} \\
			&\lesssim M + \|u\|_{S^1(I_j)}\|H^\h u\|_{L_t^4 L_z^\infty L_y^2(I_j)}^2\|V(\theta)u\|_{L_t^{2q}L_\theta^{q}L_x^{p_0}(I_j)}^{2\s-2} +\ep\\
			&\lesssim M + \eta^{2\s-2}\|u\|_{S^1(I_j)}^{3}+\ep.
		\end{split}
	\end{equation}
	By choosing $\eta>0$ and $\ep_1>0$ depending on $M$, the bootstrap argument yields
	\begin{equation}
		\|u\|_{S^1(I_j)}\lesssim M.
	\end{equation} 
	Summing these over all $I_j$ yields \eqref{uS1}. Using \eqref{Ltp2},  \eqref{uS1}, and \eqref{Xint1}, we have
	\begin{equation}\label{LuX}
		\|V(\theta)u\|_{X^*(I)}\le C(M,L).
	\end{equation}
	From \eqref{Xint1}, \eqref{S1B1}, \eqref{Ltp1}, \eqref{Ltp3}, \eqref{Ltp4}, we obtain that
	\begin{equation}\label{LXdif}
		\|U(t)V(\theta)(\phi(0)-u(0))\|_{X^*(I)}\le C_3 \ep^{a} (M+M')^{b},
	\end{equation}
	where, $a=\f{\s-1}{3\s-2}$, $b=\f{2\s-1}{3\s-2}$, and $C_3>0$. By \eqref{LuX}, we can divide $I$ into $J_1(M,L)$ subintervals $I_j=[t_{j-1}, t_j)$ such that for any $1\le j\le J_1$, it holds that
	\begin{equation}\label{LuXdel}
		\|V(\theta)u\|_{X^*(I_j)}\le \delta,
	\end{equation}
	where $\delta=\delta(M, M')$ is the constant that appears in the assumptions of Lemma \ref{Stp}. Additionally, for $\ep_0(M,M')$ given in Lemma \ref{Stp}, we set $\ep_1=\ep_1(M, L, M')$ such that
	\[ C_3\ep_1^a(M+M')^b\le \ep_0(M, M').\]
	Then, Lemma \ref{Stp} yields:
	\begin{equation}\label{Stp5-1}
		\|V(\theta)(\phi-u)\|_{X^*(I_1)} \le C(1) \ep^a (M+M')^b
	\end{equation}
	\begin{equation}\label{Stp6-1}
		\|\phi-u\|_{S^1(I_1)} \le C(1) (M+M)'
	\end{equation}
	\begin{equation}\label{Stp7-1}
		\|\phi\|_{S^1(I_1)}\le C(1) M'
	\end{equation}
	\begin{equation}\label{Stp8-1}
		\|F(V(\theta)\phi)- F(V(\theta)u))\|_{Y^*(I_1)} \le C(1)\ep^a  (M+M')^b
	\end{equation}
	\begin{equation}\label{Stp9-1}
		\|\Fav(\phi)-\Fav(u)\|_{N^1(I_1)}\le C(1)(M+M').
	\end{equation}
	Using Duhamel's formula, for any $t\in I_2$, we obtain
	\begin{equation}
		\begin{split}
			&U(t-t_1)(\phi(t_1)-u(t_1))\\
			&= U(t)(\phi(0)-u(0)) -i\lmd \int_{0}^{t_1} U(t-\tilde{t})\Big[\Fav(\phi(\tilde{t}))-\Fav(u(\tilde{t}))-e\Big]d\tilde{t}.
		\end{split}
	\end{equation} 
	To estimate the Duhamel term, we use the following lemma.
	\begin{lemma}\label{ExD}
		Let $G$ and $\Gav$ be the same as those in Lemma \ref{ExSt}. Then, we have
		\begin{equation*}
			\Big\| V(\theta)\int_{0}^{t_{j}} U(t-\tilde{t})\Gav(\tilde{t})d\tilde{t} \Big\|_{X^*(I_{j+1})} \lesssim \|G\|_{Y^*([0, t_j])},
		\end{equation*}
		\begin{equation*}
			\Big\| V(\theta)\int_{0}^{t_{j}} U(t-\tilde{t})\Gav(\tilde{t})d\tilde{t} \Big\|_{S^1(I_{j+1})} \lesssim \|G\|_{N^1([0,t_j])}.
		\end{equation*}
	\end{lemma}
	
	\begin{proof} Trace the proof of Lemmas \ref{St1} and \ref{ExSt}.
	\end{proof}
	By using Lemmas \ref{ExD}, \ref{XYstar}, \ref{St2}, and \eqref{LXdif}, we have
	\begin{equation}\label{LtpasX1}
		\begin{split}
			&\|U(t-t_1)V(\theta)(\phi(t_1)-u(t_1))\|_{X^*(I_2)}\\
			&\le \|U(t)V(\theta)(\phi(0)-u(0))\|_{X^*(I)} +C\|F(V(\theta)\phi)-F(V(\theta)u)\|_{Y^*(I_1)}+ C\|e\|_{N^1(I)} \\
			&\le C_3\ep^a (M+M')^b + CC(1)\ep^a  (M+M')^b +C\ep\\
			&\lesssim \ep^a (M+M')^b.
		\end{split}
	\end{equation}
	By choosing $\ep_1$ small depending on $M$ and $M'$ again, we can apply Lemma \ref{Stp} to $I_2$. This procedure is iterated. In other words, similar to \eqref{LtpasX1}, for each $j$, we obtain
	\begin{equation}
		\begin{split}
			&\|U(t-t_j)V(\theta)(\phi(t_j)-u(t_j))\|_{X^*(I_{j+1})}\\
			&\le \|U(t)V(\theta)(\phi(0)-u(0))\|_{X^*(I)} +C\| F(\phi)- F (u)\|_{Y([0,t_{j}])}+ C\|e\|_{N^1(I)} \\
			&\le C_3 \ep^a  (M+M')^b + C \sum_{k=1}^{j}C(k)\ep^a (M+M')^b  +C\ep\\
			&\lesssim \ep^a (M+M')^b,
		\end{split}
	\end{equation}
	provided that, we obtain the conclusions from Lemma \ref{Stp} on $I_k$ for all $0\le k\le j$. Summing the bounds corresponding to \eqref{Stp5-1} through  \eqref{Stp7-1} over all subintervals $I_j$, we obtain 
	\begin{equation}\label{Stp5-all}
		\|V(\theta)(\phi-u)\|_{X^*(I)} \le C(M,L,M') \ep^a (M+M')^b,
	\end{equation}
	\eqref{Ltp7}, and \eqref{Ltp8}. From \eqref{Stp5-all}, \eqref{Ltp7}, and Lemma \ref{Xint}, \eqref{Ltp6} is obtained. We have completed the proof.
\end{proof}

\section{Proof of the scattering result}\label{proofsc}

In this section, we focus on the case $\lmd=-1$ and prove that the global solution to \eqref{NLS} with $\phi(0)=\phi_0 \in \cK^+$ scatters in $B^1$. When $\lmd=+1$, we obtain the scattering result for any initial data in $B^1$ using a similar strategy. See Remark \ref{scdef} at the end of this section.
\subsection{Existence of a critical element}

For $\phi_0\in B^1$, we say that (SC)($\phi_0$) holds if the corresponding solution $\phi$ to \eqref{NLS} exists globally and satisfies
\begin{equation}
	\|V(\theta)\phi\|_{L_t^{2q} L_\theta^{q}L_x^{p_0}(\R\times [0,\f{\pi}{2}]\times \R^3)} <\infty,
\end{equation}  
that is, $\phi$ scatters. We consider
\begin{equation}
	S_c :=\sup\{ A : \text{ If } S[\phi_0]<A \text{ and } \phi_0\in \cK^+, \text{(SC)}(\phi_0) \text{ holds } \}.
\end{equation}
From Lemmas \ref{Ssc} and \ref{3.5}, $S_c>0$. Therefore, if we show $S_c=d$, the proof of Theorem \ref{main} is complete. Thus, we assume that $S_c<d$.

\begin{prop}\label{palais}
	Let $2<\s<4$. We assume that $S_c<d$. Suppose that $\{\phi_{n}\}_{n=1}^\infty$ is a sequence of solutions to \eqref{NLS} in $B^1$ such that $\phi_{n}(t) \in \cK^+$, $\limsup_{n\to \infty}S[\phi_n]=S_c$, and there exists $\{t_n\}_{n=1}^\infty\subset \R$ such that
	\begin{equation}\label{nonsc}
		\lim_{n\to\infty}\|V(\theta)\phi_n\|_{L_t^{2q}([t_n,\infty), L_\theta^{q}L_x^{p_0})} = \lim_{n\to\infty}\|V(\theta)\phi_n\|_{L_t^{2q}((-\infty,t_n], L_\theta^{q}L_x^{p_0})} =\infty	
	\end{equation}
	holds. Then, there exist $\psi \in  B^1$ and a sequence  $\{z_n\}_{n=1}^\infty \subset \R$ such that
	\begin{equation}
		\phi_n(t_n, \cdot, \cdot+z_n)\to \psi \quad \text{in} \quad  B^1 \quad \text{as} \quad n\to \infty.
	\end{equation}
\end{prop}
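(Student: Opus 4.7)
The plan is to run the standard concentration--compactness machinery of Kenig--Merle \cite{KM}, combining the linear profile decomposition (Proposition \ref{Lpd}) with the long-time perturbation result (Proposition \ref{Ltp}). By the time translation invariance of \eqref{NLS}, after replacing $\phi_n(\cdot)$ with $\phi_n(\cdot+t_n)$ I may assume $t_n\equiv 0$, so that both the forward and backward tails of $\|V(\theta)\phi_n\|_{L_t^{2q}L_\theta^q L_x^{p_0}}$ around $t=0$ diverge. Since $\phi_n(0)\in\cK^+$ with $S[\phi_n(0)]\to S_c<d$, Lemma \ref{3.5} gives $\sup_n\|\phi_n(0)\|_{B^1}<\infty$, and Proposition \ref{Lpd} yields profiles $\{\psi^j\}_{j=1}^{J^*}$ with parameters $\{(t_n^j,z_n^j)\}$ enjoying the orthogonality \eqref{LPDo4} together with the decoupling identities \eqref{LPDo2}--\eqref{LPDo3}.

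To each profile I attach a nonlinear profile $u^j\in C(\R,B^1)$. When $t_n^j\equiv 0$ I take $u^j$ to be the solution of \eqref{NLS} with $u^j(0)=\psi^j$; when $t_n^j\to\pm\infty$, Lemma \ref{waveo} applied to $U(t_n^j)\psi^j$ (whose potential energy $\|V(\theta)U(t_n^j)\psi^j\|_{L_{\theta,x}^{2\s+2}}$ vanishes by the argument leading to \eqref{pelimit}) produces a global $u^j$ asymptotic to $U(t+t_n^j)\psi^j(\cdot,\cdot-z_n^j)$. The decoupling \eqref{LPDo2}--\eqref{LPDo3} and the vanishing of the potential energy for $|t_n^j|\to\infty$ imply that the ``actions'' $S[U(t_n^j)\psi^j]$ sum to at most $S_c<d$, so each is strictly less than $S_c$ as soon as two or more profiles are nontrivial; by the inductive definition of $S_c$, every such $u^j$ lies in $\cK^+$ and scatters, with uniform bound
\[
\|V(\theta)u^j\|_{L_t^{2q}L_\theta^q L_x^{p_0}(\R\times[0,\f{\pi}{2}]\times\R^3)}<\infty.
\]

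I then form the approximate solution
\[
u_n^{\mathrm{app}}(t,y,z):=\sum_{j=1}^J u^j(t+t_n^j,y,z-z_n^j)+U(t)r_n^J.
\]
Using the space--time orthogonality \eqref{LPDo4} to handle cross terms and the smallness of $\|U(t)V(\theta)r_n^J\|_{L_t^{2q}L_\theta^q L_x^{p_0}}$ from \eqref{LPDrem} for large $J$, I verify that $u_n^{\mathrm{app}}$ is an approximate solution of \eqref{NLS} whose error tends to zero in $N^1(\R)$ and whose Strichartz norm is uniformly bounded in $n$. Proposition \ref{Ltp} then furnishes a uniform bound on $\|V(\theta)\phi_n\|_{L_t^{2q}L_\theta^q L_x^{p_0}(\R\times\cdots)}$, contradicting \eqref{nonsc}. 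Therefore exactly one profile (say $j=1$) can be nonzero and $r_n^1\to 0$ in $B^1$.

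It remains to exclude $t_n^1\to\pm\infty$. If, for instance, $t_n^1\to+\infty$, the same dispersive computation used in the proof of Lemma \ref{waveo} forces $\|V(\theta)U(t)\phi_n(0)\|_{L_t^{2q}L_\theta^q L_x^{p_0}((-\infty,0]\times\cdots)}\to 0$; comparing $\phi_n$ to the linear evolution on $(-\infty,0]$ via Proposition \ref{Ltp} would then keep the backward Strichartz norm of $\phi_n$ finite, contradicting \eqref{nonsc}. Thus $t_n^1\equiv 0$ along a subsequence and $\phi_n(0,\cdot,\cdot+z_n^1)=\psi^1+r_n^1(\cdot,\cdot+z_n^1)\to\psi^1$ strongly in $B^1$, which is the desired conclusion with $\psi=\psi^1$, $z_n=z_n^1$. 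The main obstacle is showing that the cross interactions of different profiles inside the averaged, anisotropic nonlinearity $\Fav$ truly decouple asymptotically: this requires combining \eqref{LPDo4} with the mixed-norm exotic Strichartz machinery of Section \ref{prel} (in particular Proposition \ref{ExSt2} and Lemma \ref{St4}) so that neither a time nor a $z$-spatial orthogonality is lost upon inserting the $\theta$-average.
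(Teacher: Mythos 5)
Your skeleton coincides with the paper's argument: translate so that $t_n\equiv 0$, get a uniform $B^1$ bound from Lemma \ref{3.5}, run the profile decomposition of Proposition \ref{Lpd}, attach nonlinear profiles via the Cauchy problem and Lemma \ref{waveo}, contradict \eqref{nonsc} through Proposition \ref{Ltp} unless a single profile survives, and finally exclude $|t_n^1|\to\infty$. However, the steps you only assert are exactly where the paper has to work, and as written they are gaps. First, to say that the actions $S[U(t_n^j)\psi^j]$ and $S[r_n^J]$ decouple \emph{and are nonnegative} (so that each profile's action is at most $S_c$, and strictly below $S_c$ when two profiles are nontrivial, or when the remainder keeps a positive share) you must show that the profiles and remainders themselves lie in $\cK^+$; this is the variational step \eqref{decK+} (the analogue of \cite[Proposition 5.2]{Gdb}) and it does not follow from ``the inductive definition of $S_c$''. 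Second, per-profile finiteness of $\|V(\theta)u^j\|_{L_t^{2q}L_\theta^qL_x^{p_0}}$ is not enough: to get a bound on the approximate solution uniform in $J$ you need the small-data bound of Lemma \ref{Ssc} for all but finitely many profiles, which is what gives \eqref{bdduJsc}. Third, the ``error tends to zero in $N^1$'' claim is the heart of the matter; you correctly flag the cross-interaction decoupling inside $\Fav$ as the main obstacle, but you do not resolve it, whereas the paper proves it (Lemmas \ref{Lerror} and \ref{bdduJ}) using the orthogonality \eqref{LPDo4} together with the mixed-norm estimates of Section \ref{prel}. Note also that putting $U(t)r_n^J$ inside $u_n^{\mathrm{app}}$ forces you to estimate additional error terms generated by the free remainder; the paper avoids this by leaving the remainder in the initial-data mismatch, where hypothesis \eqref{Ltp4} is verified directly from \eqref{LPDrem}. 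Finally, the conclusion ``exactly one profile and $r_n^1\to 0$ in $B^1$'' needs one more line: the surviving profile must carry action exactly $S_c$ (otherwise it scatters and you contradict \eqref{nonsc} again), so $S[r_n^1]\to 0$, and then $r_n^1\in\cK^+$ plus Lemma \ref{3.5} upgrades this to $B^1$ convergence.

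There is also a concrete sign slip in your last step. With the decomposition $\phi_n(0)=U(t_n^1)\psi^1(\cdot,\cdot-z_n^1)+r_n^1$ and $t_n^1\to+\infty$, one has
\begin{equation*}
\|U(t)V(\theta)U(t_n^1)\psi^1\|_{L_t^{2q}L_\theta^qL_x^{p_0}([0,\infty))}=\|U(s)V(\theta)\psi^1\|_{L_t^{2q}L_\theta^qL_x^{p_0}([t_n^1,\infty))}\to 0,
\end{equation*}
while the norm over $(-\infty,0]$ tends to the full-line norm, which is finite but not small. So it is the \emph{forward} tail of $\phi_n$ that stays bounded after applying Proposition \ref{Ltp} (as in the paper), not the backward one; since \eqref{nonsc} requires both tails to diverge, the contradiction survives, but your statement as written is false and should be corrected.
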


\begin{proof}
	From time-translation symmetry, we may assume that $t_n\equiv 0$. From Lemma \ref{3.5}, we have that 
	\begin{equation}\label{bddphin}
		\|\phi_n(t)\|_{B^1}\lesssim S[\phi_n] (\le d),	
	\end{equation} 
	and  $\{ \phi_{n}(0) \}_{n=1}^\infty$ is bounded by $B^1$. We use Proposition \ref{Lpd} to obtain a profile for any finite $ J\le J^*$.
	\begin{equation*}
		\phi_n(0,x)=\sum_{j=1}^J U(t_n^j)\psi^j(y,z-z_n^j) +r_n^J(x), 
	\end{equation*}
	\begin{equation*}
		U(-t_n^J)r_n^J(y,z+z_n^J) \rightharpoonup 0 \quad \text{ weakly in} \quad B^1,
	\end{equation*}
	\begin{equation}\label{decompS}
		S[\phi_n(0)] = \sum_{j=1}^JS[U(t_n^j)\psi^j] +S[r_n^J] +o_n(1),
	\end{equation}
	\begin{equation*}
		I[\phi_n(0)] = \sum_{j=1}^JI[U(t_n^j)\psi^j] +I[r_n^J] +o_n(1),
	\end{equation*}
	\begin{equation}\label{orthj}
		\lim_{n\to\infty}|t_n^j-t_n^k|+|z_n^j-z_n^k|=\infty, \quad \text{for any} \quad j\neq k,
	\end{equation}
	\begin{equation*}
		\lim_{J\to J^*} \limsup_{n\to \infty}\|U(t)V(\theta)r_n\|_{L_t^{2q}L_\theta^{q}L_x^{p_0}(\R \times [0,\f{\pi}{2}]\times \R^3)} =0.
	\end{equation*}
	If $J^*<\infty$, by setting $r_n^J\equiv 0 $ for all $J> J^*$, we may set $J^*=\infty$.
	Using the same argument as in the proof of \cite[Propositon 5.2]{Gdb}, we obtain
	\begin{equation}\label{decK+}
		U(t_n^j)\psi^j\in \cK^+, \quad r_n^J\in \cK^+\quad \text{for all sufficiently large }n,
	\end{equation}
	for any $J$ and $j$. Lemma \ref{3.5} implies that  $S[U(t_n^j)\psi^j] \ge0$ and $S[ r_n^J]\ge0$, and by \eqref{decompS}, it holds for each $1\le j$ that
	\begin{equation}\label{bdprof}
		0\le \limsup_{n\to \infty}S[U(t_n^j)\psi^j] \le \limsup_{n\to \infty}S[\phi_n(0)] \le S_c.
	\end{equation}
	Note that for each $j$, either $t_n^j\equiv 0$ or $t_n^j\to \pm\infty$ as $n\to\infty$. \\
	
	For each $j$, we introduce a nonlinear profile $v^j$ associated $\psi^j$ which depends on the limiting value of $t_n^j$.
	\begin{itemize}
		\item
		If $t_n^j\equiv 0$, we define $v^j$ to be the solution to \eqref{NLS} with $v^j(0)=\psi^j$.
		
		\item
		If $t_n^j \to +\infty$, we define $v^j$ to be the solution to \eqref{NLS} which scatters forward in time to $U(t)\psi^j$.
		
		\item
		If $t_n^j \to -\infty$, we define $v^j$ to be the solution to \eqref{NLS} which scatters backward in time to $U(t)\psi^j$.
	\end{itemize}
	Each $v^j$ belongs to $\cK^+$ throughout its lifespan, and exists globally. Indeed, if $t_n^j\equiv 0$, by \eqref{decK+} and the definition of $v^j$, it holds that  
	\[ S[v^j(0)]=S[\psi^j] \in \cK^+. \]
	By contrast, if  $t_n^j\to \pm \infty$, we have from \eqref{bdprof} that
	\[ \h\|\psi^j\|_{B^1}^2+ \h \|\psi^j\|_{L^2}^2=\lim_{t\to  \infty}S[U(t)\psi^j] \le S_c<d,\]
	and Lemma \ref{waveo} supports the claim.
	We denote $ v_n^j(t,x):= v^j(t+t_n^j,y, z-z_n^j)$ and $u_n^J:=\sum_{j=1}^J v_n^j$. Let 
	\begin{equation*}
		w_n^J(x):= \phi_n(0,x)-u_n^J(0,x)=\phi_n(0,x)-\sum_{j=1}^J v_n^j(0,x).
	\end{equation*}
	Because $r_n^J-w_n^J \to 0$ in $B^1$ as $n\to \infty$ for all $J$, we obtain
	\begin{equation*}
		U(-t_n^J)w_n(\cdot, \cdot+z_n^J) \rightharpoonup 0 \quad \text{ weakly in} \quad B^1,
	\end{equation*}
	\begin{equation*}
		\lim_{J\to \infty} \limsup_{n\to \infty}\|U(t)V(\theta)w_n\|_{L_t^{2q}L_\theta^{q}L_x^{p_0}(\R \times [0,\f{\pi}{2}]\times \R^3)} =0,
	\end{equation*}
	and $\limsup_{n\to \infty}S[w_n^J]\ge 0$ for each $J$. Therefore, we have
	\begin{equation}\label{bddSc}
		\sum_{j=1}^{\infty} S[v_n^j]=\sum_{j=1}^{\infty}S[v^j]\le \limsup_{n\to \infty}S[\phi_n(0)] \le S_c.
	\end{equation}
	Here, we consider two cases.\\
	
	\noindent
	\underline{\textbf{Case 1}: $\sup_{1\le j }S[v^j] < S_c$.}\\
	
	In this case, by the definition of $S_c, $ each $v^j$ satisfies \[ \|V(\theta)v^j\|_{L_t^{2q}L_\theta^{q}L_x^{p_0}(\R)}<\infty.\]
	From \eqref{bddSc}, Lemmas \ref{Ssc} and \ref{3.5}, there exists $J_0$ such that
	$j\ge J_0$ implies that
	\[ \|V(\theta)v^j\|_{L_t^{2q}L_\theta^{q}L_x^{p_0}(\R)}\lesssim S[v_j(0)]^\h. \]
	From the orthogonality, it holds for large $J$ that
	\begin{equation*}
		\begin{split}
			\|V(\theta)u_n^J\|_{L_t^{2q}L_\theta^{q}L_x^{p_0}(\R)}^2 &\lesssim \sum_{j=1}^{J_0-1} 	\|V(\theta)v_n^j\|_{L_t^{2q}L_\theta^{q}L_x^{p_0}(\R)}^2 + \sum_{j=J_0}^{J} S[v_j(0)] + o_n(1)\\
			&\lesssim \sum_{j=1}^{J_0-1} 	\|V(\theta)v^j\|_{L_t^{2q}L_\theta^{q}L_x^{p_0}(\R)}^2 + S_c  + o_n(1) \quad \text{ as } n\to\infty .
		\end{split}
	\end{equation*}
	Hence,
	\begin{equation}\label{bdduJsc}
		\limsup_{n\to \infty}\|V(\theta)u_n^J\|_{L_t^{2q}L_\theta^{q}L_x^{p_0}(\R)} \lesssim \Big(\sum_{j=1}^{J_0-1} \|V(\theta)v^j\|_{L_t^{2q}L_\theta^{q}L_x^{p_0}(\R)}^2 +  S_c\Big)^\h.
	\end{equation}
	The right-hand side does not depend on $J$. 
	We now prove that $u_n^J$ is a good approximation of $\phi_n$ for sufficiently large $n$ and $J$.

	\begin{lemma}\label{Lerror}
		We have
		\begin{equation}\label{error1}
			\limsup_{J\to \infty} \limsup_{n\to \infty} \|U(t)V(\theta)(u_n^J(0)-\phi_n(0))\|_{L_t^{2q}L_\theta^{q}L_x^{p_0}(\R)}=0,
		\end{equation}
		\begin{equation}\label{error2}
			\limsup_{n\to \infty}\|\Fav(u_n^J)-\sum_{j=1}^J\Fav(v_n^j)\|_{N^1(\R)}=0 \quad \text{for all} \quad J\ge1.
		\end{equation}
	\end{lemma}
	\begin{proof}
		By the decomposition, we obtain
		\begin{equation*}
			\begin{split}
				&\limsup_{J\to \infty} \limsup_{n\to \infty} \|U(t)V(\theta)(u_n^J(0)-\phi_n(0))\|_{L_t^{2q}L_\theta^{q}L_x^{p_0}(\R)}\\
				=&\limsup_{J\to \infty} \limsup_{n\to \infty} \|U(t)V(\theta)w_n^J\|_{L_t^{2q}L_\theta^{q}L_x^{p_0}(\R)}=0.
			\end{split}
		\end{equation*}
		Next, by using \eqref{eqH}, Lemma \ref{St2}, and Minkowski's inequality, we obtain
		\begin{equation}\label{defFav}
			\begin{split}
				&\Big\| \nabla_x \Big(  \Fav(u_n^J)-\sum_{j=1}^J\Fav(v_n^j) \Big)\Big\|_{L_t^{(2q)'}L_z^{p'}L_y^2(\R)} + \Big\| \langle y\rangle \Big(  \Fav(u_n^J)-\sum_{j=1}^J\Fav(v_n^j) \Big)\Big\|_{L_t^{(2q)'}L_z^{p'}L_y^2(\R)}\\
				&\lesssim \Big\|\int_0^{\f{\pi}{2}} V(\theta)\Big[ H^\h\Big(F(V(\theta)u_n^J)-\sum_{j=1}^J F(V(\theta)v_n^j)\Big)\Big] d\theta \Big\|_{L_t^{(2q)'}L_z^{p'}L_y^2(\R)}\\
				&\qquad \qquad \qquad  +  \Big\|\int_0^{\f{\pi}{2}} V(\theta)\Big[\partial_z \Big( F(V(\theta)u_n^J)-\sum_{j=1}^J F(V(\theta)v_n^j)\Big)\Big] d\theta \Big\|_{L_t^{(2q)'}L_z^{p'}L_y^2(\R)}\\
				&\lesssim \Big\|\nabla_x\Big( F(V(\theta)u_n^J)-\sum_{j=1}^J F(V(\theta)v_n^j) \Big)\Big\|_{L_t^{(2q)'}L_z^{p'}L_\theta^{q'} L_y^{p'}(\R)}\\
				&\qquad  \qquad \qquad +  \Big\|\langle y\rangle \Big(  F(V(\theta)u_n^J)-\sum_{j=1}^J F(V(\theta)v_n^j) \Big)\Big\|_{L_t^{(2q)'}L_z^{p'}L_\theta^{q'} L_y^{p'}(\R)},
			\end{split}
		\end{equation}
		where $F(f)=\lmd |f|^{2\s}f$. For $J\ge2$ there exists $C_{\s,J}>0$ such that for any $\{\al_j\}_{j=1}^J \subset \C$ it holds that
		\begin{equation*}
			\Big| F\Big(\sum_{j=1}^J \al_j\Big) - \sum_{j=1}^J F(\al_j)  \Big| \le C_{\s,J} \sum_{1\le j \neq k \le J}|\al_j|^{2\s}|\al_k|.
		\end{equation*}
		\begin{equation*}
			\begin{split}
				\Big| \nabla_x\Big(F\Big(\sum_{j=1}^J \al_j\Big) - \sum_{j=1}^J F(\al_j) \Big) \Big|\le& C_{\s, J}  \sum_{1\le j \neq k \le J}|\nabla_x \al_j||\al_k| \sum_{l=1}^J|\al_l|^{2\s-1} \\
				&+   \sum_{1\le j \neq k \le J}| \al_j||\al_k|\sum_{l=1}^J|\nabla_x \al_l| \sum_{m=1}^J|\al_m|^{2\s-2}  .
			\end{split}
		\end{equation*}
		Then, we have
		\begin{equation*}
			\begin{split}
				& \Big\|\nabla_x\Big( \big|\sum_{j=1}^JV(\theta)u_n^j \big|^{2\s}\sum_{j=1}^J V(\theta)v_n^j -\sum_{j=1}^J|V(\theta)v_n^j|^{2\s} V(\theta)v_n^j\Big)\Big\|_{L_t^{(2q)'}L_\theta^{q'} L_x^{p'}}\\
				&\lesssim \sum_{j\neq k} \|(\nabla_xV(\theta)v_n^j)V(\theta)v_n^k\|_{L_t^{q}L_\theta^\f{q}{2}L_x^{(\f{1}{p}+\f{1}{p_0})^{-1}}}  \sum_{l=1}^J\|V(\theta)v_n^l\|_{L_t^{2q}L_\theta^{q}L_x^{p_0}}^{2\s-3}\|H^\h v_n^l\|_{L_t^4 L_z^\infty L_y^2}^2\\
				&\quad +  \sum_{j\neq k} \|V(\theta)v_n^j V(\theta)v_n^k\|_{L_t^{q}L_\theta^\f{q}{2}L_x^\f{p_0}{2}} \sum_{l=1}^J\|\nabla_x V(\theta)v_n^l\|_{L_t^{2q}L_\theta^{q}L_x^p}\\
				&\qquad \qquad \qquad \qquad \qquad \qquad \times  \sum_{l=m}^J\|V(\theta)v_n^m\|_{L_t^{2q}L_\theta^{q}L_x^{p_0}}^{2\s-4}\|H^\h v_n^m\|_{L_t^4 L_z^\infty L_y^2}^2.
			\end{split}
		\end{equation*}
		To calculate these indices, we refer to Lemma \ref{St4}, Remarks \ref{ind} and \ref{indad}.
		Using the argument in the proof of Proposition \ref{Ltp}, $\|V(\theta)v^j\|_{L_t^{2q}L_\theta^{q}L_x^{p_0}(\R)}<\infty$ implies $\|v^j\|_{S^1(\R)}<\infty$. See \eqref{uS1}.  By the orthogonality,  
		\begin{equation*}
			\|(\nabla_xV(\theta)v_n^j)V(\theta)v_n^k\|_{L_t^{q}L_\theta^\f{q}{2}L_x^{(\f{1}{p}+\f{1}{p_0})^{-1}}} \to0, \qquad
			\|V(\theta)v_n^j V(\theta)v_n^k\|_{L_t^{q}L_\theta^\f{q}{2}L_x^\f{p_0}{2}} \to 0 \quad \text{as} \quad n\to\infty.
		\end{equation*}
		The second term on the right-hand side of \eqref{defFav} can be estimated in the same manner. Therefore, we have \eqref{error2}.
	\end{proof}
	
	\begin{lemma}\label{bdduJ} For all $J\ge1$, we have 
		\begin{equation}\label{bdduJ1}
			\limsup_{n\to\infty}\|u_n^J\|_{L_t^\infty B^1(\R)} \lesssim  S_c.
		\end{equation} 
	\end{lemma}
	\begin{proof} 
		Fix $J$. Let $\ep>0$ be  arbitrarily small. As each $v^j$ scatters forward and backward, the set $\{U(-t)v^j(t)\}_{t\in \R} $ is precompact. That is, for each $j$, there exist $L^j\in \N$ and $\{f_l^j \}_{l=1}^{L^j} \subset B^1$ such that the following statement holds:\\
		
		For any $t\in \R$, there exists $l$ such that $\|U(-t)v^j(t)-f_l^j\|_{B^1}\le \ep$.\\
		
		\noindent
		Therefore, for any $1\le j<k \le J$, $t\in \R$ and $n\in \N$, 
		\begin{equation}
			\begin{split}
				&	\min_{1\le l \le L^j, 1\le m \le L^k}|\langle v_n^j(t), v_n^k(t) \rangle_{B^1} - \langle U(t_n^j)f_l^j( \cdot -z_n^j),  U(t_n^k)f_m^k( \cdot -z_n^k) \rangle_{B^1}|\\
				=&\min_{1\le l \le L^j, 1\le m \le L^k}|\langle v_n^j(t), v_n^k(t) \rangle_{B^1} - \langle U(t+t_n^j)f_l^j( \cdot -z_n^j),  U(t+t_n^k)f_m^k( \cdot -z_n^k) \rangle_{B^1}|\\
				\lesssim  & S_c \ep.	
			\end{split}
		\end{equation}
		In contrast, by the orthogonality \eqref{orthj}, there exists $N^J$ such that $n\ge N^J$ implies
		\begin{equation}
			\max_{1\le j<k \le J}\max _{ 1\le l \le L^j, 1\le m\le L^k}|\langle U(t_n^j)f_l^j( \cdot -z_n^j),  U(t_n^k)f_m^k( \cdot -z_n^k) \rangle_{B^1}|\le \ep.
		\end{equation}
		Therefore, for any $1\le j<k \le J$, $n\ge N^J$ implies
		\begin{equation*}
			|\langle v_n^j(t), v_n^k(t) \rangle_{B^1}|\lesssim (S_c +1)\ep,
		\end{equation*}
		for all $t\in \R$. Then,
		\begin{equation*}
			\begin{split}
				\|u_n^J(t)\|_{B^1}^2 &\le \sum_{j=1}^J \|v_n^j(t)\|_{B^1}^2 + C\sum_{1\le j<k \le J} (S_c +1) \ep \lesssim S_c+ (S_c+1) J\ep.
			\end{split}
		\end{equation*}
		Hence, we obtain \eqref{bdduJ1}.
	\end{proof}
	
	From \eqref{bdduJsc}, Lemmas \ref{Lerror} and \ref{bdduJ}, we can apply Proposition \ref{Ltp} to $u_n^J$ and $\phi_n$ for sufficiently large $J$ depending on the right-hand side of \eqref{bdduJsc}, and $S_c$ and large $n$ depending on $J$. Then, we obtain  
	\begin{equation*}
		\|V(\theta)\phi_n\|_{L_t^{2q}L_\theta^{q}L_x^{p_0}(\R)} \le C(S_c)
	\end{equation*}
	for all sufficiently large $n$, contradicting \eqref{nonsc}. Therefore, Case 1 does not occur.\\

	\noindent
	\underline{\textbf{Case 2}: $\sup_{1\le j}S[v^j] = S_c$.}\\
	
	Comparing this with \eqref{bddSc}, we observe that $S[v^j]=0$ except for one element, and we may assume $S[v^1]=S_c$. Then, the profile decomposition is simplified to
	\begin{equation}\label{precpt0}
		\phi_n(0,x)=U(t_n^1)\psi^1(y, z-z_n^1) + r_n^1(x).
	\end{equation}
	Because $S[\phi_n(0)] \to S_c$, $S[U(t_n^1)\psi^1] \to S[v^1]=S_c$, and \eqref{decompS}, we have $S[r_n^1]\to 0$. As $r_n^1$ belongs to $\cK^+$, this implies that
	\begin{equation}\label{rem}
		r_n^1\to 0  \quad \text{ in } \quad B^1
	\end{equation} 
	as $n\to\infty$. If $t_n^1\equiv0$, then it holds that $\phi_n(0, \cdot, \cdot+z_n^1)\to \psi^1$ in $B^1$. This is the conclusion.\\
	Suppose that $t_n^1\to +\infty$ is $n\to \infty$. From \eqref{UVB}, we obtain that
	\begin{equation*}
		\|U(t)V(\theta)\psi^1\|_{L_t^{2q}L_\theta^q L_x^{p_0}(\R)}\lesssim \|\psi\|_{B^1}<\infty.
	\end{equation*}
	Combining this with \eqref{precpt0} and \eqref{rem}, we obtain
	\begin{equation}
		\|U(t)V(\theta)\phi_n(0)\|_{L_t^{2q}L_\theta^q L_x^{p_0}([0,\infty))} \to 0 \quad \text{as} \quad  n\to +\infty.
	\end{equation}
	We use Proposition \ref{Ltp} with $I=[0,\infty)$, $u=\phi_n$, $e=0$, and $\phi=0$ to obtain $\|V(\theta)\phi_n\|_{L_t^{2q}L_\theta^q L_x^{p_0}([0,\infty))}\to 0$, which contradicts \eqref{nonsc}. The same argument allows us to exclude the case $t_n^1\to-\infty$.
\end{proof}

\begin{prop}\label{criticale}
	Let $2<\s<4$. We assume that $S_c<d$. Then, there exists $\phi_{0,c}\in  B^1$ such that the solution $\phi_c$ to \eqref{NLS} with $\phi_c(0)=\phi_{0,c}$ satisfies $\phi_c(t)\in \cK^+$, $S[\phi(t)]=S_c$, and 
	\begin{equation}\label{nonsc2}
		\|V(\theta)\phi_c\|_{L_t^{2q}([0,\infty), L_\theta^{q}L_x^{p_0})} = \|V(\theta)\phi_c\|_{L_t^{2q}((-\infty,0], L_\theta^{q}L_x^{p_0})} =\infty.	
	\end{equation}	
	Furthermore, there exists a function $z\in C([0,\infty), \R)$ such that
	\begin{equation}\label{precpt1}
		\{\phi_c(t,\cdot, \cdot+z(t)): t\ge0\}
	\end{equation}
	is precompact.
\end{prop}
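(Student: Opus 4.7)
The plan is to apply Proposition \ref{palais} twice: once to extract the critical element from a minimizing non-scattering sequence, and once more, applied to the critical element itself at a sequence of times, to secure the precompactness of its orbit.

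First, by the definition of $S_c$, I would choose $\{\phi_{0,n}\} \subset \cK^+$ with $S[\phi_{0,n}] \to S_c$ whose corresponding global solutions $\phi_n$ fail (SC)($\phi_{0,n}$). Since the local theory makes $\|V(\theta)\phi_n\|_{L_t^{2q}L_\theta^q L_x^{p_0}}$ finite on any compact time interval while being infinite on $\R$, a time translation combined with a bisection argument produces $t_n \in \R$ such that both halves $[t_n,\infty)$ and $(-\infty,t_n]$ carry infinite norm. Proposition \ref{palais} then yields, up to a subsequence, $\psi\in B^1$ and $z_n\in\R$ with $\phi_n(t_n,\cdot,\cdot+z_n)\to\psi$ in $B^1$. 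Setting $\phi_{0,c}:=\psi$, the continuity of $S$ and $P$ on $B^1$ (the potential term is continuous via Lemma \ref{bdnl}) gives $S[\phi_{0,c}]=S_c<d$ and $P[\phi_{0,c}]\ge0$, hence $\phi_{0,c}\in\cK^+$. Since $S_c>0$ (by Lemmas \ref{Ssc} and \ref{3.5}), $\phi_{0,c}\neq 0$, and Theorem \ref{main}(i) supplies a global solution $\phi_c$ with $\phi_c(t)\in\cK^+$ and $S[\phi_c(t)]=S_c$ for all $t$.

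To establish \eqref{nonsc2}, I would argue by contradiction: if $\|V(\theta)\phi_c\|_{L_t^{2q}([0,\infty))}$ were finite, then Proposition \ref{Ltp} applied on $[0,\infty)$ with reference $u:=\phi_c$, $e:=0$, and approximating solutions $\tilde\phi_n(t,y,z):=\phi_n(t+t_n,y,z+z_n)$ (whose initial data converge in $B^1$ to $\phi_{0,c}$) would force $\|V(\theta)\tilde\phi_n\|_{L_t^{2q}([0,\infty))}$ to be bounded uniformly in large $n$, contradicting the forward-infinite choice of $t_n$; the backward case is identical. For the precompactness of $\{\phi_c(t,\cdot,\cdot+z(t))\}_{t\ge 0}$, I would use that local finiteness of the scattering norm together with \eqref{nonsc2} yields $\|V(\theta)\phi_c\|_{L_t^{2q}([T,\infty))}=\|V(\theta)\phi_c\|_{L_t^{2q}((-\infty,T])}=\infty$ for every $T\in\R$. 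Thus, for any sequence $t_k\to\infty$, applying Proposition \ref{palais} with $\phi_n\equiv\phi_c$ and times $t_n:=t_k$ extracts $z_k\in\R$ and a $B^1$-convergent subsequence of $\phi_c(t_k,\cdot,\cdot+z_k)$; bounded $\{t_k\}$ is handled by $B^1$-continuity of the flow, and a standard selection procedure produces the required $z(t)$.

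The main delicate point is the initial time-translation step: if the non-scattering sequence $\phi_n$ fails (SC) on only one temporal side, one must manipulate the sequence (by either time-shifting far into the interval of norm-accumulation or passing to a secondary sequence built from the scattered-side limit) to arrive at data satisfying the two-sided failure hypothesis of Proposition \ref{palais}. Once this arrangement is secured, the remainder is essentially bookkeeping via the perturbation lemma and continuity of the conserved functionals on $B^1$.
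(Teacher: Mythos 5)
Your proposal is correct and follows essentially the same route as the paper: take a minimizing sequence of non-scattering solutions in $\cK^+$, pick times $t_n$ so that the two one-sided norms diverge, apply Proposition \ref{palais} to extract $\phi_{0,c}$, transfer the infinite one-sided norms to $\phi_c$ via the long-time perturbation Proposition \ref{Ltp}, and then reapply Proposition \ref{palais} to the constant sequence $\phi_c$ to obtain precompactness. The only caveat is your claim that one can make \emph{both} halves carry infinite norm for each fixed $n$ (false if some $\phi_n$ scatters on one side), but this is harmless because \eqref{nonsc} only requires the one-sided norms to diverge as $n\to\infty$, which your "time-shifting far into the interval of norm-accumulation" remark already supplies.
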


\begin{proof}
	From the definition of $S_c$ and the assumption $S_c<d$, there exists $\{ \phi_{n} \}_{n=1}^\infty$, a sequence of global solutions to \eqref{NLS} such that $\phi_n(t) \in \cK^+$,  $S[\phi_n(0)]\to S_c$. This satisfies
	\begin{equation}\label{non}
		\lim_{n\to\infty}\|V(\theta)\phi_n\|_{L_t^{2q}(\R , L_\theta^{q}L_x^{p_0})} = \infty.
	\end{equation}
	Indeed, if there exists $L>0$ such that 
	\[ \|V(\theta)\phi_n\|_{L_t^{2q}(\R , L_\theta^{q}L_x^{p_0})}\le L \]
	holds for any sufficiently large $n$, then by Proposition \ref{Ltp}, there exists a global solution to \eqref{NLS} $\tilde{\phi}$ such that 
	\[ S[\tilde{\phi}]=S_c+\ep, \quad \text{and} \quad \|V(\theta)\phi\|_{L_t^{2q}(\R , L_\theta^{q}L_x^{p_0})}<\infty, \]
	where $\ep$ is a positive constant that depends on $S_c$ and $L$. This contradicts the definition of $S_c$.
	Thus, \eqref{non} holds, and there exists  a sequence $\{t_n\}_{n=1}^\infty$ such that 
	\begin{equation}\label{nonsc3}
		\lim_{n\to\infty}\|V(\theta)\phi_n\|_{L_t^{2q}([t_n,\infty), L_\theta^{q}L_x^{p_0})} = \lim_{n\to\infty}\|V(\theta)\phi_n\|_{L_t^{2q}((-\infty,t_n], L_\theta^{q}L_x^{p_0})} =\infty.	
	\end{equation}
	By applying Proposition \ref{palais} to $\{ \phi_{n} \}_{n=1}^\infty$, we have 
	\begin{equation}\label{strongc}
		\phi_n(t_n, \cdot, \cdot+z_n)\to  \psi \quad \text{in} \quad B^1
	\end{equation}
	for some $\{z_n\}_{n=1}^\infty$ and $\psi\in B^1$.
	By setting $\phi_{0,c}=\psi$, we obtain the desired solution $\phi_c$. First, we have
	\begin{equation}
		S[\phi_c(t)]=S[\psi]=\lim_{n\to \infty} S[\phi_n]=S_c.
	\end{equation}
	Second, by combining Propositions \ref{Ltp}, \eqref{nonsc3}, and \eqref{strongc}, we deduce \eqref{nonsc2}.
	Finally, to prove the precompactness of \eqref{precpt1}, we apply Proposition \ref{palais} to $\phi_c$ and an arbitrary sequence $\{t_n\}_{n=1}^\infty\subset \R_\ge0$  with $\tilde{\phi}_n =\phi_c$ (where $\tilde{\phi}_n$ represents the sequence in Proposition \ref{palais}). Then, there exist $\psi\in B^1$ and $\{z_n\}_{n=1}^\infty \subset \R$ such that
	\begin{equation*}
		\phi_c(t_n, \cdot, \cdot+z_n)\to  \psi \quad \text{in} \quad B^1.
	\end{equation*}
\end{proof}

\subsection{Extinction of the critical element}

\begin{prop}\label{extinc}
	Let $\phi_c$ be the critical element constructed in Proposition \ref{criticale}. Then, $\phi_c\equiv0$.
\end{prop}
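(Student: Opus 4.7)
The plan is to assume $\phi_c \not\equiv 0$ and derive a contradiction via a truncated virial identity, following the blow-up analysis of Section \ref{sharpt}. The first step is to establish
\begin{equation*}
\delta_0 := \inf_{t \ge 0} P[\phi_c(t)] > 0.
\end{equation*}
Since $\phi_c(t) \in \cK^+$, one has $P[\phi_c(t)] \ge 0$. If $P[\phi_c(t_0)] = 0$ at some $t_0$, then, recalling $P = J^{1,2}$, $\phi_c(t_0) \in \cA^{1,2}$, and Lemma \ref{3.2} with $(a,b) = (1,2)$ (which satisfies \eqref{asab} and $2a-b=0$, using $2<\sigma<4$) yields $S[\phi_c(t_0)] \ge d^{1,2} = d$, contradicting $S[\phi_c] = S_c < d$. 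Hence $P > 0$ strictly along the orbit. The uniform positive bound $\delta_0$ follows because $P$ is continuous and $z$-translation-invariant on $B^1$, the centered orbit $\{\phi_c(t,\cdot,\cdot+z(t))\}_{t \ge 0}$ is precompact by Proposition \ref{criticale}, and its closure cannot contain $0$ (otherwise Lemma \ref{3.5} would force $S_c = 0$, contradicting $S_c > 0$).

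The second step is the sublinear growth $|z(t)| = o(t)$ as $t \to \infty$. Equation \eqref{NLS} is Galilean-invariant in $z$: a direct computation shows that $\phi \mapsto e^{i\beta z/2 - i\beta^2 t/4}\phi(t,y,z-\beta t)$ maps solutions to solutions (the averaged nonlinearity commutes with a $z$-dependent phase since $V(\theta)$ acts only in $y$), and this symmetry preserves $\cK^+$ and precompactness of the orbit (with modulation $\tilde z(t) = z(t)+\beta t$), while shifting $G[\phi_{0,c}]$ by $\beta M[\phi_c]$; an appropriate choice of $\beta$ reduces us to $G[\phi_{0,c}]=0$. I would then combine conservation of $G$, the local-in-$z$ mass-current identity (whose nonlinear contribution vanishes after integration in $y$, since $\int_{\R^2}\overline{\phi}\,F_{\rm{av}}(\phi)\,dy$ is real, as in Remark \ref{equivNLS2}), and the tightness of the centered orbit to rule out any subsequence with $|z(t_n)|/t_n \ge \delta > 0$ by the standard contradiction argument, carried out with truncated first moments to handle the lack of finiteness of $\int z|\phi_c|^2\,dx$ in the $B^1$ framework.

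With these preparations, I would apply the same truncated virial as in Section \ref{sharpt}: set $\chi_R(z) := R^2 \chi(z/R)$ with $\chi \in C^4(\R)$ satisfying $\chi(s) = s^2$ on $|s| \le 1$, $\chi$ constant on $|s| \ge 2$, $\chi'' \le 2$, and $\chi^{(4)}$ bounded, and define $W_R(t) := \int_{\R^3} \chi_R(z)\, |\phi_c(t)|^2\,dx$. The computation \eqref{vir3} yields
\begin{equation*}
W_R''(t) = 4\, P[\phi_c(t)] + \mathrm{Err}_R(t),
\end{equation*}
where $\mathrm{Err}_R(t)$ collects contributions supported in $\{|z| > R\}$ from the kinetic and averaged terms, plus an $O(R^{-2})$ remainder from $\chi^{(4)}$. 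Since $V(\theta)$ is local in $z$, the averaged contribution can be estimated by Lemma \ref{bdnl} applied to the restriction of $\phi_c$ to $\{|z|>R\}$; precompactness of the centered orbit then makes $\|\mathrm{Err}_R\|_{L^\infty([T_0,T])}$ arbitrarily small once $R$ dominates $\sup_{[T_0,T]}|z(t)|$.

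By Step 2, for every $\eta > 0$ there exists $T_0$ with $|z(t)| \le \eta t$ for $t \ge T_0$; choosing $R(T) := \eta^{-1/2} T$ gives $|z(t)| \le R/2$ on $[T_0,T]$ for $T$ large, so $W_R''(t) \ge 2\delta_0$ throughout $[T_0,T]$. Integrating twice produces $W_R(T) \ge \delta_0\, T^2 + O(T)$, while trivially $W_R(T) \le C\, R^2\, M[\phi_c] = C\, \eta^{-1}\, T^2\, M[\phi_c]$; taking $\eta$ small (independently of $T$) and then $T$ large gives the contradiction, forcing $\phi_c \equiv 0$. The main technical obstacle will be carrying out Step 2 rigorously in the $B^1$ framework: establishing the sublinearity of $z(t)$ without finiteness of $\int z|\phi_c|^2\,dx$ requires balancing truncation boundary terms against momentum conservation and exploiting precompactness quantitatively.
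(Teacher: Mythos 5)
Your overall strategy is the same as the paper's: uniform positivity of $P$ along the orbit (the paper cites \cite[Lemma 5.5, Step 2]{Gdb}; your compactness argument via Lemma \ref{3.2} and Lemma \ref{3.5} is an acceptable substitute), sublinearity $|z(t)|=o(t)$ (the paper cites \cite[Lemma 5.5, Step 1]{Gdb} and introduces the truncated first moment $\Gamma_R$, essentially your Step 2), and a truncated virial identity as in Section \ref{sharpt}. The problem is the end-game, where your quantitative comparison does not close. First, your claimed lower bound $W_R(T)\ge \delta_0 T^2+O(T)$ hides the term $W_R'(T_0)(T-T_0)$: with the only available crude bound $|W_R'(T_0)|\lesssim \|\partial_z\phi_c\|_{L^2}\|\chi_R'\phi_c\|_{L^2}\lesssim R$ and your choice $R=\eta^{-1/2}T$, this term is of size $\eta^{-1/2}T^2$, i.e.\ of the same order as the main term and with a large constant precisely when $\eta$ is small, so the stated lower bound is not justified. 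Second, even granting it, the comparison with the trivial upper bound $W_R(T)\le CR^2M=C\eta^{-1}T^2M$ yields $\delta_0T^2\lesssim \eta^{-1}T^2$, which is no contradiction for any $T$; shrinking $\eta$ only enlarges $R$ and hence the right-hand side. So as written the proof of the rigidity step fails.

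There are two ways to repair it. The paper's route integrates $W''$ only \emph{once}: with $P[\phi_c(t)]\ge\eta$, the compactness bound \eqref{cptcRj} on the error terms, the gradient bound $|W'(t)|\le C_2R$, and, crucially, the radius chosen with a \emph{small} slope, $R_{t_1}=\rho+\frac{\eta}{4C_2}t_1$ (so that $\{|z|\ge R_{t_1}\}\subset\{|z-z(t)|\ge\rho\}$ on $[t_0,t_1]$ by sublinearity), one gets $3\eta(t_1-t_0)\le |W'(t_1)-W'(t_0)|\le 2C_2R_{t_1}=2C_2\rho+\frac{\eta}{2}t_1$, which is contradictory for $t_1$ large. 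Alternatively, your double-integration scheme can be saved, but only with compactness-refined bounds replacing the trivial ones: $W_R(T_0)\le o_\rho(1)R^2+C(\rho+|z(T_0)|)^2$, $|W_R'(T_0)|\le o_\rho(1)R+C(\rho+|z(T_0)|)$, and an upper bound $W_R(T)\lesssim(\rho+\eta T)^2+o_\rho(1)R^2$ rather than $CR^2M$ (this is exactly the ``$W(0)\le o_R(1)R^2$, $W'(0)\le o_R(1)R$'' device used in Section \ref{sharpt}); with those, choosing first $\eta$ small, then $\rho$ large depending on $\eta$, then $T$ large does give the contradiction. Either fix is needed; the proposal as stated has a genuine gap at this point.
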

\begin{proof}
	We assume that $\phi_{0,c}\neq0$. 
	We define
	\begin{equation}
		W(t):= \int_{\R^3} \chi(z)|\phi_c(t)|^2 dx,
	\end{equation}
	where $\chi \in C_{\text{rad}}^4(\R)$ such that
	\begin{equation}\label{chi11}
		\chi(z)=
		\left\{
		\begin{array}{lll}
			z^2 &&	 0<|z|\le R \\
			0  &&  2R<|z|
		\end{array}
		\right. 
	\end{equation}
	and
	\begin{equation}\label{chi12}
		0\le \chi \le z^2,\quad |\chi'|\lesssim R, \quad \chi''\le 2, \quad \chi^{(4)}\le \f{4}{R}.
	\end{equation}
	By direct calculations, we obtain:
	\begin{equation}
		W'(t)= 2\text{Im}\int_{\R^3}\partial_z\phi_c(t) \overline{\phi_c(t)} \chi'(z) dx
	\end{equation}
	\begin{equation}\label{W"}
		\begin{split}
			W''(t)&= 4P[\phi_c(t)] + 4 \int_{\R^3}|\partial_z\phi_c(t)|^2 (\chi''(z)-2) dx \\ 
			&\quad -\f{4\s}{\pi(\s+1)}\int_0^{\f{\pi}{2}}\int_{\R^3}|V(\theta)\phi_c(t)|^{2\s+2}(\chi''(z)-2)d\theta dx	 - \int_{\R^3}|\phi_c(t)|^2 \chi^{(4)}(z) dx \\
			&=: 4P[\phi_c(t)]+ \cR_1+\cR_2+\cR_3.
		\end{split}
	\end{equation}
	Then, there exists $C_1>0$ such that 
	\begin{equation}\label{bddcRj}
		| \cR_1+\cR_2+\cR_3|\le C_1\int_{\{|z|\ge R\}} \Big[ |\partial_z\phi_c(t)|^2+|\phi_c(t)|^2 +\int_{0}^{\f{\pi}{2}}|V(\theta)\phi_c(t)|^{2\s+2}d\theta \Big] dx.
	\end{equation}
	Because $\|\phi_c(t)\|_{B^1} \simeq S[\phi_c(t)] = S_c$, there exists $C_2>0$ such that
	\begin{equation}\label{Wbdd2}
		|W'(t)|\le C_2 R.
	\end{equation}
	 Here, let $\eta>0$ satisfy
	\begin{equation}
		P[\phi_c(t)]\ge\eta
	\end{equation}
	for all $t\ge0$, refer to Step 2 in the proof of \cite[Lemma 5.5]{Gdb}.
	Due to the precompactness of \eqref{precpt1}, there exists a large $\rho$ such that
	\begin{equation}\label{cptcRj}
		\int_{\{|z-z(t)|\ge \rho\}} \Big[ |\partial_z\phi_c(t)|^2+|\phi_c(t)|^2 +\int_{0}^{\f{\pi}{2}}|V(\theta)\phi_c(t)|^{2\s+2}d\theta \Big] dx\le  \f{\eta}{C_1}.
	\end{equation}
	 On the other hand, by the same argument as in Step 1 of the proof of \cite[Lemma 5.5]{Gdb}, we have
	\begin{equation}\label{z(t)}
		\lim_{n\to\infty}\f{|z(t)|}{t}=0.
	\end{equation}
	To prove \eqref{z(t)}, we use the form
	\begin{equation}
		\Gamma_R(t):=\int_{\R^3} R\tilde{\chi}\Big(\f{z}{R}\Big) |\phi_c(t, x)|^2 dx,
	\end{equation}
	where $\tilde{\chi} \in C_0^\infty(\R)$ satisfies:
	\begin{equation*}
	\end{equation*}
	\begin{equation*}
		\tilde{\chi}(z)= \left\{\begin{array}{ll}z, & |z| \le 1\\
			0, &  |z|\ge 2^{\f{1}{3}}
		\end{array} \right.
	\end{equation*}
	and
	\begin{equation*}
		|\tilde{\chi}(z)|\le |z|, \quad \|\tilde{\chi}\|_{L^\infty}\le 2, \quad  \|\tilde{\chi}'\|_{L^\infty}\le 4.
	\end{equation*}
	From \eqref{z(t)}, there exists $t_0>0$ such that 
	\[ |z(t)|\le \f{\eta }{4C_2} t\]
	holds for all $t\ge t_0$. Let $t_1 > t_0$, and we set
	\[R_{t_1}:= \rho + \f{\eta t_1}{4C_2}.\]
	Then, it holds that $\{ |z|\ge R_{t_1} \}\subset \{|z-z(t)|\ge \rho \}$ for $t \in [t_0, t_1]$ and we obtain from \eqref{W"}, \eqref{bddcRj}, and \eqref{cptcRj} that
	\begin{equation}\label{Wbdd3}
		W''(t)\ge 4P[\phi_c(t)]-|\cR_1+\cR_2+\cR_3| \ge 4\eta-\eta=3\eta.
	\end{equation}
	However, if we take $R=R_{t_1}$, from \eqref{Wbdd3} and \eqref{Wbdd2}, we have 
	\begin{equation*}
		3\eta(t_1-t_0)\le \int_{t_0}^{t_1}W''(t)dt\le |W'(t_1)-W'(t_0)|\le 2 C_2 R_{t_1}=2C_2\rho+\f{\eta t_1}{2}.
	\end{equation*} 
	By choosing $t_1$ to be sufficiently large, we have a contradiction. Hence, $\phi_{0,c}=0$ and $\phi_c\equiv 0$.
\end{proof}

\begin{proof}[Proof of the scattering part of Theorem \ref{main}]
	We assume that $S_c<d$. Thus, we obtain a contradiction between Propositions \ref{criticale} and \ref{extinc}. Thus, $S_c=d$.
\end{proof}

\begin{remark}\label{scdef}
	When $\lmd=+1$, we first define
	\begin{equation}
		S_c :=\sup\{ A : \text{ If } S[\phi_0]<A \text{ and } \phi_0\in B^1, \text{(SC)}(\phi_0) \text{ holds } \},
	\end{equation}
	and we assume that $S_c<\infty$. Then, using the same argument as in this section, we have the contradiction. Note that for any $\psi\in B^1$, it holds that
	\begin{equation*}
		0\le \|\psi\|_{B^1}^2 \le 2S[\psi].
	\end{equation*}
\end{remark}

\section*{Acknowledgments}
\noindent
The author is deeply grateful to Professor Kenji Nakanishi for his  valuable advice on this study. \\
The author is supported by JST, the establishment of university fellowships towards the creation of science technology innovation, Grant Number JPMJFS2123.\\
The author would like to thank Editage (www.editage.jp) for English language editing.\\

\noindent
The author reports that there are no competing interests to declare.

\bibliographystyle{abbrv}
\footnotesize{
	\bibliography{ScNLSavbib}
}
\end{document}